\documentclass[a4paper,11pt,oneside]{article}
\usepackage{amsfonts,amsmath,amsthm,amssymb,verbatim,pstricks-add,pst-arrow,xcolor}
\usepackage{hyperref}
\usepackage{fullpage}
\usepackage[normalem]{ulem}
\usepackage{tikz}
\usepackage{url}
\usepackage{etoolbox}

\newcommand{\Z}{\mathbb{Z}}
\renewcommand{\O}{\mathcal{O}}

\newcommand{\ZZ}{\mathfrak{Z}}

\def\B{\mathbf{B}}
\def\C{\mathbb{C}}

\def\S{\Sigma}
\def\a{\alpha}
\def\b{\beta}

\DeclareMathOperator{\SF}{SF}

\def\QQ{\mathbb{Q}}

\newtheorem{theorem}{Theorem}
\newtheorem{lemma}[theorem]{Lemma}
\newtheorem{corollary}[theorem]{Corollary}
\newtheorem{proposition}[theorem]{Proposition}

\theoremstyle{remark}
\newtheorem{remark}[theorem]{Remark}

\theoremstyle{definition}
\newtheorem{example}[theorem]{Example}

\numberwithin{theorem}{section}

\AtBeginEnvironment{example}{%
  \pushQED{\qed}}
\AtEndEnvironment{example}{\popQED}
\AtBeginEnvironment{remark}{%
  \pushQED{\qed}}
\AtEndEnvironment{remark}{\popQED}
\AtBeginEnvironment{proof}{}

\allowdisplaybreaks

\title{Invariant factors from products of cyclotomic polynomials,\\ with applications}

\author{Vanni Noferini\thanks{Corresponding author. Supported by a Research Council of Finland grant (decision number 370932).
Department of Mathematics and Systems Analysis, Aalto University, P.O. Box 11100, FI-00076, Aalto, Finland.
E-mail: \href{mailto:vanni.noferini@aalto.fi}{vanni.noferini@aalto.fi}}\quad and Gerald Williams\thanks{Partially supported by a London Mathematical Society Scheme 4 (Research in Pairs) grant 42518. School of Mathematics, Statistics and Actuarial Science, University of Essex, Colchester, Essex CO4 3SQ, United Kingdom. E-mail: \href{mailto:gerald.williams@essex.ac.uk}{gerald.williams@essex.ac.uk}}}

\begin{document}
\maketitle

\begin{abstract}
Let $f,g\in\Z[t]$ be products of cyclotomic polynomials such that either $g/\gcd(f,g)$ or $f/\gcd(f,g)$ (or both) is squarefree. We give explicit formulae for the invariant factors of $f(C_g)$, where $C_g$ is the companion matrix of $g$, expressing them in terms of the indices and multiplicities of the cyclotomic factors of $f$ and $g$. A more compact formula holds for the circulant case $g(t)=t^n-1$, assuming that every cyclotomic factor shared by $f$ and $g$ has multiplicity $1$ in $f$. We discuss applications to several topics, including abelianizations of cyclically presented groups, homology groups of cyclic branched covers of knots, adjacency matrices and shifted graph Laplacians, $K$-theory, and Smith forms of matrix powers. For example, for two families of hyperbolic knots we give the first integral homology in every covering degree; or, when $g$ is squarefree, we  determine the least eventual period of the Smith forms of powers of $f(C_g)$.
\end{abstract}

{\small
\noindent\textbf{Keywords.} Smith form, invariant factor, cyclotomic polynomial,
companion matrix, circulant matrix, cyclically presented group, cyclic branched cover, $K$-theory.
\par\smallskip
\noindent\textbf{MSC2020:}
15A21, 15B36, 11C08, 20F05, 19L47, 57M12.
\par}

\section{Introduction}

In \cite{NoferiniWilliamsBrieskorn,NoferiniWilliamsApplications},
Noferini and Williams study Smith forms of matrices in the \emph{companion ring} of a monic polynomial. We recall this setting over an arbitrary Elementary Divisor Domain (EDD) $R$
\cite{Friedland2016}, which in this paper will typically (but not always) be chosen as $R=\Z$. Let $g\in R[t]$ be monic of degree $n\geq1$, written as $g(t)=t^n+\sum_{j=0}^{n-1}g_jt^j$. Following \cite[Section 3]{NoferiniWilliamsBrieskorn}, we define the companion matrix of $g$ as
\[ C_g:=\begin{bmatrix} -g_{n-1}&-g_{n-2}&\cdots&-g_0\\ 1&0&\cdots&0\\ &\ddots&\ddots&\vdots\\ 0&&1&0
 \end{bmatrix}\in R^{n\times n}.\]
 Occasionally, we also allow $g=1$; in this case, $C_g \in R^{0 \times 0}$ is the empty matrix, and we conventionally agree that its determinant is $1$. The companion ring of $g$ is
\[ R[C_g]:=\{f(C_g):f\in R[t]\}\cong R[t]/\langle g\rangle,\]
where the isomorphism sends the residue class $[f]$ to $f(C_g)$ \cite{NoferiniWilliamsBrieskorn,NoferiniWilliamsApplications}. We write the monomial basis as a column, ordered by decreasing degree, and vectors of polynomial coefficients as rows. With this convention,
\[ C_g\begin{bmatrix}t^{n-1}\\\vdots\\t\\1\end{bmatrix} \equiv t\begin{bmatrix}t^{n-1}\\\vdots\\t\\1\end{bmatrix} \bmod g.\]
Thus, multiplication by $f$ modulo $g$ sends a coefficient row $a^T$ to $a^T f(C_g)$; see
\cite[Section 2.3]{NoferiniWilliamsApplications}.

When $g(t)=t^n-1$, the matrix $C_g$ is a cyclic permutation matrix, and its companion ring is the ring of circulant matrices over $R$; see \cite{Davis1979}. One reason to study the Smith forms of integer circulants comes from cyclically presented groups.  These are groups defined by a presentation with generators $x_0,\ldots,x_{n-1}$ and whose defining relators are obtained by
cyclically shifting the subscripts in a single word $w$. If $a_j$ is the exponent sum of $x_j$ in $w$, the polynomial $f(t):=\sum_{j=0}^{n-1}a_jt^j$ is called the \emph{representer polynomial}. The circulant $f(C_{t^n-1})$ presents the abelianization of the group. Its Smith form therefore determines that abelianization and, when the group is the fundamental group of a 3-dimensional manifold, the first integral homology of the manifold; see \cite[Section 2.2]{NoferiniWilliamsBrieskorn} and \cite[Section 2.4]{NoferiniWilliamsApplications}.

In this paper, we focus on the case where $f,g\in\Z[t]$ are products of cyclotomic polynomials and either $F:=f/\gcd(f,g)$ or $G:=g/\gcd(f,g)$ is squarefree; as we will see below, we may assume without loss of generality that $G$ is squarefree. This assumption is in particular always satisfied if $g$ itself is squarefree, which happens naturally in most applications that we are aware of; for example, when $g(t)=t^n-1$ so that $f(C_g)$ is a circulant matrix. A squarefree $g$ corresponds to the matrix $C_g$ having finite multiplicative order, i.e., there exists a positive integer $N>0$ such that $C_g^N=I$. Our goal is to express the invariant factors of $f(C_g)$ explicitly in terms of the indices and the multiplicities of the cyclotomic factors of $f$ and $g$. A related study, that analyzed the Smith form over $\QQ[\lambda]$ of circulant polynomial matrices, was carried out in \cite{TelloniWilliams}. We denote by $\Phi_d$ the $d$-th cyclotomic polynomial, whose roots are the primitive $d$-th roots of unity. Its degree is Euler's totient function $\varphi(d):=\#\{1\leq j\leq d:\gcd(j,d)=1\}$; here and throughout, $\#S$ denotes the cardinality of a finite set $S$.

Given a prime $p$ and a positive integer $n$, we write $\beta:=\nu_p(n)$ and $\alpha:=n/p^\beta$, so that $n=\alpha p^\beta$ and $p\nmid\alpha$. Here $\nu_p(n)$, the \emph{$p$-adic valuation} of $n$, is the exponent of $p$ in its prime factorization \cite[Chapter II, Section 3]{Neukirch99}. The integer $\alpha$ is the \emph{coprime-to-$p$ part} of $n$.

A square integer matrix $U$ is \emph{unimodular} if $\lvert\det U\rvert=1$; more generally, a square matrix over a ring is unimodular if it is invertible over that ring. We write $M\sim N$ to mean that $M$ and $N$ are unimodularly equivalent, that is, $N=UMV$ for unimodular matrices $U,V$. The Smith Theorem, first proved over $\Z$ by Smith \cite{Smith1861}, states that every integer matrix $M\in\Z^{n\times n}$ of rank $r$ is unimodularly equivalent to a unique matrix
\[ \SF(M):=\left(\bigoplus_{i=1}^r s_i\right) \oplus0_{(n-r)\times(n-r)}, \qquad s_i>0,\quad s_i\mid s_{i+1}\quad(1\leq i<r).\]
We call $\SF(M)$ the \emph{Smith form} of $M$ and its positive diagonal entries the \emph{invariant factors}. The prime powers $p^{\nu_p(s_i)}$ with $\nu_p(s_i)>0$ are the \emph{elementary divisors}; we call the exponents $\nu_p(s_i)$ the \emph{partial multiplicities at $p$} \footnote{There is no universal agreement in the literature on whether the zero diagonal elements should be included among the invariant factors or not; in this paper, we do not include them. Note, however, that our definition of partial multiplicities does instead include zeros (corresponding to unit invariant factors).}. More generally, the Smith Theorem holds over any elementary divisor domain \cite[Theorem 1.14.1]{Friedland2016}, and in particular over any PID. In this extended setting, the invariant factors are only unique up to multiplication by units, and hence the Smith form can be made unique by picking a convention analogous to choosing nonnegative integers for the invariant factors over $\Z$. We will often work over the ring of integers of a field extension of $\QQ_p$, and pick powers of $p$ for the invariant factors.

The Smith Theorem is closely related to the structure theorem for finitely presented modules over an EDD (or finitely generated modules over a PID). Over $\Z$, we have
\[ \operatorname{coker}M:=\Z^{1\times n}/(\Z^{1\times n}M) \cong\bigoplus_{i=1}^r\Z/s_i\Z\oplus\Z^{n-r}.\]
For example, let $p$ be prime and $m\geq1$, and write $m-1=:q(p-1)+r$, $0\leq r<p-1$. Then, as we will see in Example \ref{ex:example1}, 
\begin{equation}\label{eq:example1}
 \SF\left((C_{t^p-1}-I)^m\right) =p^q I_{p-1-r}\oplus p^{q+1}I_r\oplus0.
\end{equation}
 This gives the decomposition
\[ \operatorname{coker}(C_{t^p-1}-I)^m \cong(\Z/p^q\Z)^{p-1-r}  \oplus(\Z/p^{q+1}\Z)^r\oplus\Z.\]
The same example also showcases a connection with the $K$-theory of the lens space  obtained by identifying points of the unit sphere in $\C^m$ that differ by multiplication by a $p$-th root of unity.  We explain this link in Example \ref{ex:lensgroups}, and we treat more general cyclic actions on spheres in  Section \ref{sec:Ktheory}.

Let us briefly recall some results on matrices in companion rings that can already be found in the literature. The most relevant for the present paper is \cite[Theorem C]{NoferiniWilliamsBrieskorn}, which, for coprime integers $r,s\geq2$ and $n\geq2$, yields the Smith form of $f(C_g)$, where
\[ f(t):=\frac{(t^{rs}-1)(t-1)}{(t^r-1)(t^s-1)},\qquad g(t):=t^n-1.\]
Here $f$ is the Alexander polynomial of the torus knot $T(r,s)$, and is a squarefree product of cyclotomic polynomials. We will give several generalizations of \cite[Theorem C]{NoferiniWilliamsBrieskorn}; for example,  Corollary \ref{thm:originale} extends it to arbitrary squarefree products $f$, while the more general  Theorem \ref{thm:global} also allows repeated cyclotomic factors in $f$ and a broader class of choices for $g$. In \cite{Apostol70}, Apostol determines the resultants of cyclotomic polynomials. By the identity $\lvert\det f(C_g)\rvert=\lvert\operatorname{Res}(f,g)\rvert$, this equivalently gives the product of the invariant factors whenever $f$ and $g$ are coprime. For a single pair of cyclotomic polynomials, the complete Smith form of $\Phi_a(C_{\Phi_b})$ is given in \cite[Theorem 7.4]{NoferiniWilliamsBrieskorn}. We will frequently use three further results from
\cite{NoferiniWilliamsBrieskorn,NoferiniWilliamsApplications}:
\begin{itemize}
    \item First, if $z:=\gcd(f,g)$ is the monic greatest common divisor of $f$ and $g$, then
\cite[Theorem A]{NoferiniWilliamsBrieskorn} states
\[ f(C_g)\sim (f/z)(C_{g/z})\oplus0_{\deg z\times\deg z}.\]
This determines the number of zero diagonal entries and reduces the problem to coprime polynomials. 
\item Second, the \emph{Swap Theorem} \cite[Theorem 3.11]{NoferiniWilliamsApplications} states that, for monic $f,g$ with $m:=\deg f\leq n:=\deg g$,
\[ f(C_g)\sim I_{n-m}\oplus g(C_f).\]
We will use it when interchanging the two polynomials gives a smaller, or simpler, matrix. Together with Theorem \ref{thm:global}, the Swap Theorem also gives a formula when $f/\gcd(f,g)$ is squarefree. 
\item Third, the \emph{Composition Theorem} \cite[Theorem 3.13]{NoferiniWilliamsApplications} states that, for every nonconstant monic polynomial $h$, the matrix $(f \circ h)(C_{g \circ h})$ is unimodularly similar to the direct sum of $\deg h$ copies of $f(C_g)$. 
\end{itemize}
Moreover, \cite[Theorems 6.1 and 6.3]{NoferiniWilliamsBrieskorn} describe how factorizations of $f$ and $g$ can be used, under appropriate assumptions, to determine the Smith form. If $f=f_1f_2$ and $\operatorname{Res}(f_1,g)$ and $\operatorname{Res}(f_2,g)$ are coprime, then
\[\SF(f(C_g))=\SF(f_1(C_g))\,\SF(f_2(C_g));\]
and if $g=g_1g_2$, where $g_1,g_2$ are monic, and $\operatorname{Res}(f,g_1)$ and $\operatorname{Res}(f,g_2)$ are coprime, then
\[f(C_g)\sim f(C_{g_1})\oplus f(C_{g_2}).\]
In the setting of the present manuscript, we still wish to exploit a factorization of $g$, but the resultants involved in such factorization may have common prime factors and so we need to develop new techniques to handle this situation. In particular, in Theorem \ref{lem:bello} we exploit a factorization by assuming that $\operatorname{Res}(g_1,g_2)$ is a unit (no assumptions on $f$ are needed). Theorem \ref{thm:collage} also achieves a block diagonalization, specific to the case where $f,g$ are products of cyclotomic polynomials and under more technical assumptions. 

Our proofs rely on some basic number theory; background on the concepts used here can be found in \cite[Chapter II, Sections 3--7]{Neukirch99}. Let $\QQ_p$ denote the field of $p$-adic numbers and let $\Z_p$ be its ring of integers; we also use $\mathbb F_p:=\Z/p\Z$ for the finite field with $p$ elements. The valuation $\nu_p$ extends to $\nu_p:\QQ_p\longrightarrow\Z\cup\{+\infty\}$, normalized by $\nu_p(p)=1$ and $\nu_p(0):=+\infty$. When studying local Smith forms over $\Z_p$, we normalize nonzero invariant factors to be powers of $p$.

We present our main result in two steps. Theorem \ref{thm:closed} provides the local Smith form over $\Z_p$ when the cyclotomic indices are powers of $p$. In Theorem \ref{thm:global}, we reduce arbitrary cyclotomic indices to this case and combine the local Smith forms using the local-global principle of Lemma \ref{lem:reordersmith}. Corollary \ref{thm:originale} gives a more compact formula for
integer circulant matrices when all the shared cyclotomic factors have multiplicity $1$ in $f$. The proofs of the main results are then given in Section \ref{sec:proof}.
We turn to applications in Section \ref{sec:applications}, which begins by considering the so-called cyclically presented groups of type $\ZZ$ and calculates the abelianization of a subfamily of these groups. For knots of tunnel number at most $1$ in the 3-sphere $S^3$ whose Alexander polynomials are products of cyclotomic polynomials, we study the first integral homology of cyclic branched covers. In particular, Corollaries \ref{cor:Km} and \ref{cor:pretzel} give this homology in every covering degree for two families of hyperbolic knots. We also treat a family of $(1,1)$-knots in lens spaces, and we discuss cable knots. Turning to graph theory, we determine Smith forms of adjacency matrices of powers of cycles and shifted Laplacians of certain Cartesian products of graphs. In Section \ref{sec:Ktheory}, we apply Theorem \ref{thm:global} to equivariant $K$-theory. Corollary \ref{cor:illustKtheory} gives the additive $K$-groups for a family of nonfree actions of cyclic groups of order $p^3$ on spheres, and Example \ref{ex:lensgroups} recovers classic $K$-groups of lens spaces. Section \ref{sec:matrixpowers} connects these $K$-groups with the eventual periodicity of Smith forms of integer matrix powers \cite{Nof26}. Theorem \ref{thm:final} gives, when $g$ is squarefree, the least eventual period of the Smith forms of powers of $f(C_g)$ as the least common denominator of all $p$-adic valuations of its nonzero eigenvalues.

\section{Main results}

Let us start with the simple Lemma \ref{lem:reordersmith}, which records both how to obtain the Smith form of a diagonal matrix and how to reconstruct integer invariant factors from their
partial multiplicities at each prime. Both results are folklore; the latter is known as the local-global principle, and for a reordering argument over a polynomial ring that is related to the former see, for example, \cite[Proof of Theorem 4.1, Section 5.1]{Noferini12}.
\begin{lemma}\label{lem:reordersmith}
Let $a_1,\ldots,a_r$ be positive integers. For each prime $p$, let $\eta_{p,1}\leq\cdots\leq \eta_{p,r}$ be the list of the $p$-adic valuations of the indexed list $(a_i)_{i=1}^r$, rearranged in increasing order. Then
\[ \SF\left(\bigoplus_{i=1}^r a_i\right) =\bigoplus_{i=1}^r s_i, \qquad s_i:=\prod_p p^{\eta_{p,i}}.\]
More generally, if $A$ is an integer matrix having rank $r$ and nonzero local invariant factors $p^{\eta_{p,1}},\ldots,p^{\eta_{p,r}}$ over $\Z_p$, then its invariant factors over $ \Z $ are given by $s_i = \prod_p p^{\eta_{p,i}}$, $i=1,\dots,r$.
\end{lemma}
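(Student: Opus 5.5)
The natural route is to prove the second (local-global) statement first and then obtain the diagonal case as a special instance. The key tool is that a matrix's invariant factors are controlled by its determinantal divisors. For an integer matrix $A$ of rank $r$, let $d_k(A)$ denote the gcd of all $k\times k$ minors. Then $d_k(A)\neq 0$ for $k\le r$, and the invariant factors are $s_k=d_k/d_{k-1}$ with $d_0=1$. This follows from the Cauchy--Binet formula, which makes $d_k$ invariant under unimodular equivalence, combined with evaluating $d_k$ on the Smith form itself.

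The same statement holds over the PID $\Z_p$. There determinantal divisors are only defined up to units, so we normalize them to powers of $p$. Since the minors of $A$ are integers, their gcd in $\Z_p$ is $p^{\min\nu_p(\text{minors})}=p^{\nu_p(d_k(A))}$. Hence the local invariant factors satisfy
\[ \eta_{p,1}+\cdots+\eta_{p,k}=\nu_p(d_k(A)), \qquad k=1,\ldots,r. \]
Subtracting consecutive identities gives $\eta_{p,k}=\nu_p(d_k/d_{k-1})=\nu_p(s_k)$. Because each $s_k$ is a positive integer, $s_k=\prod_p p^{\nu_p(s_k)}=\prod_p p^{\eta_{p,k}}$. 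The product is finite, since $\eta_{p,k}=0$ for every $p\nmid d_r$. The ordering $\eta_{p,1}\le\cdots\le\eta_{p,r}$ comes automatically from the divisibility chain of the local Smith form. This proves the general statement.

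For the first statement, take $A=\bigoplus_i a_i$. It suffices to show that its local Smith form over $\Z_p$ is $\bigoplus_i p^{\eta_{p,i}}$. Write $a_i=p^{\nu_p(a_i)}u_i$, where $u_i$ is a unit in $\Z_p$. Scaling by the diagonal unit matrix of the $u_i^{-1}$ turns $A$ into $\bigoplus_i p^{\nu_p(a_i)}$. Conjugating by a permutation matrix, which is unimodular, then sorts the diagonal entries increasingly. The result is a diagonal matrix with a divisibility chain, namely $\bigoplus_i p^{\eta_{p,i}}$, which is the local Smith form by uniqueness. Applying the second statement with $r$ equal to the number of $a_i$ finishes the proof.

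The only step needing care is the identification of local determinantal divisors with $p$-parts of the global ones, i.e. that a gcd computed in $\Z$ and in $\Z_p$ agree up to the $p$-part. This is immediate from valuations, since both equal $p$ raised to the minimum of $\nu_p$ over the same finite set of integer minors. I do not expect a genuine obstacle; the rest is bookkeeping.
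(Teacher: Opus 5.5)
Your proof is correct and uses the same tool as the paper: the $p$-adic valuation of the $k$-th determinantal divisor equals $\eta_{p,1}+\cdots+\eta_{p,k}$, computed over $\Z$ or over $\Z_p$. The only difference is the order of the argument. The paper first computes the gcd of minors of the diagonal matrix directly, then handles a general $A$ by replacing it with its Smith form over $\Z$. You prove the local-global statement directly and obtain the diagonal case by diagonalizing over $\Z_p$ with units and a permutation.
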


\begin{proof}
It is clear that the minors of order $k$ of a diagonal matrix are either $0$ or the products of $k$ diagonal entries (or both, if the matrix is singular). Hence, the $p$-adic valuation of their gcd is
$\eta_{p,1}+\cdots+\eta_{p,k}$. Since this gcd is the product of the first $k$ invariant factors, by taking the quotient of the products for $k$ and $k-1$ we get $\nu_p(s_k)=\eta_{p,k}$. Repeating this at every prime and using the fact that the integers are a unique factorization domain yields the first part of the statement.

For the second part of the statement, it suffices to note that unimodular equivalence over
$\Z_p$ preserves the determinantal ideals, i.e., the ideals of $\Z_p$ generated by the minors of each
order. We may therefore replace $A$ by its Smith form over $\Z$ and apply the first part to its nonzero diagonal entries.
\end{proof}

By \cite[Theorem A]{NoferiniWilliamsBrieskorn}, we may always divide by the gcd of $f$ and $g$, thus reducing to the case of coprime polynomials $F$ and $G$. We assume that $G$ is squarefree. Our next step is to obtain the Smith form of $F(C_G)$ over $\Z_p$ when all cyclotomic indices occurring in either $F$ or $G$ are powers of a fixed prime $p$. We will later explain how to combine these local formulae to obtain the Smith form over $\Z$ for arbitrary cyclotomic indices. In this section, we focus on statements and remarks; all but the easiest proofs are postponed to Section \ref{sec:proof}.

\subsection{Local Smith forms in the case of prime power indices}

Fix a prime $p$. The goal of this subsection is to state Theorem \ref{thm:closed}, which gives the Smith form of $F(C_G)$ over $\Z_p$ under the assumptions that (i) $F$ and $G$ are coprime products of cyclotomics, with $G$ squarefree, and (ii) the indices of the cyclotomic factors are all powers of $p$. While its formal statement is somewhat technical, the essence of Theorem \ref{thm:closed} is that to each factor $\Phi_{p^b}$ of $G$ is associated a diagonal block having size $\varphi(p^b)$ (that is, $p^b-p^{b-1}$ for $b>0$ or $1$ for $b=0$); the elements of this diagonal block depend (in a highly nonlinear way that is nonetheless expressible by closed formulae plus one Euclidean division) both on which indices of cyclotomic factors in $F$ and $G$ are smaller or are larger than $p^b$, and on the multiplicities of these factors in $F$. Taking the direct sum of these blocks yields a diagonal matrix unimodularly equivalent to $F(C_G)$. This diagonal matrix may not be directly the Smith form, but if not then one can easily obtain the latter by invoking Lemma \ref{lem:reordersmith} and reordering.

\begin{theorem}\label{thm:closed}
Let $p$ be a prime, and let $A,B$ be disjoint finite sets of nonnegative integers, with $B\neq\varnothing$. For each $h\in A$, let $k_h$ be a positive integer, and define
\[ F(t):=\prod_{h\in A}\Phi_{p^h}(t)^{k_h},\qquad G(t):=\prod_{b\in B}\Phi_{p^b}(t). \]
For every $b\in B$, define $q_b,r_b$ by the Euclidean division
\begin{equation}\label{eq:division}
 \max\left(0,\sum_{\substack{h\in A\\h<b}}\varphi(p^h)k_h-
                   \sum_{\substack{c\in B\\c<b}}\varphi(p^c)\right)
       =:q_b\varphi(p^b)+r_b,\qquad 0\leq r_b<\varphi(p^b).
\end{equation}
Also define
\begin{equation}\label{eq:gap}
 \alpha_b:=\sum_{\substack{h\in A\\h>b}}k_h+\#\{c\in B:c>b\},\quad
 \delta_b:=\#\{c\in B:c>b,\
       \nexists \ h\in A\text{ with }b<h<c\}.
\end{equation}
Then, over $\Z_p$,
\begin{equation}\label{eq:closed}
 F(C_G)\sim\bigoplus_{b\in B}
 \begin{cases}
  p^{\alpha_b-\delta_b}I_{\varphi(p^b)-r_b}
       \oplus p^{\alpha_b+1}I_{r_b},&\text{if }q_b=0,\\p^{\alpha_b+q_b}I_{\varphi(p^b)-r_b} \oplus p^{\alpha_b+q_b+1}I_{r_b},&\text{if }q_b\geq1.
 \end{cases}
\end{equation}
\end{theorem}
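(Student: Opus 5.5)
We would prove Theorem \ref{thm:closed} by working in the ring $R:=\Z_p[t]/\langle G\rangle$, where $F(C_G)$ is the matrix of multiplication by $F$. The cokernel is then $R/FR$, and we want its structure as a $\Z_p$-module. The key structural input is that $\Phi_{p^b}(t)$ is Eisenstein at $p$ after the substitution $t\mapsto t+1$. Hence $\Z_p[t]/\Phi_{p^b}=\Z_p[\zeta_{p^b}]=:\O_b$ is a totally ramified DVR with uniformizer $\pi_b:=\zeta_{p^b}-1$ and ramification index $e_b:=\varphi(p^b)$. Moreover $\Phi_{p^h}(\zeta_{p^b})$ has $\pi_b$-adic valuation $\varphi(p^b)$ if $h>b$ (it is a unit times $p$), and valuation $p^h$ if $h<b$, because $\Phi_{p^h}(\zeta)=(\zeta^{p^{h}}-1)/(\zeta^{p^{h-1}}-1)$ for $h\ge1$, while $\Phi_1(\zeta)=\pi_b$.

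First we would order $B=\{b_1<\dots<b_m\}$ and filter $R$ by the kernels of the projections onto the products of the $\Phi_{p^{b_j}}$ with $j\le i$. This gives a chain of $R$-submodules whose successive quotients are the ideals $\Phi_{p^{b_1}}\cdots\Phi_{p^{b_{i-1}}}\cdot\O_{b_i}$, each free of rank $\varphi(p^{b_i})$ and generated over $\O_{b_i}$ by one element. The $\pi_{b_i}$-valuation of that generator is $\sum_{c<b_i}\varphi(p^c)$, which is the subtracted sum in \eqref{eq:division}. We then try to show that, in a suitable $\Z_p$-basis adapted to this filtration, multiplication by $F$ is block triangular. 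Using the Swap Theorem locally, or a Chinese-remainder-type argument inside $\QQ_p[t]/\langle G\rangle$, we would reduce to computing, block by block, the "effective" valuation of $F$ on $\O_{b}$ as $\pi_b^{N_b}\cdot p^{\alpha_b}\cdot(\text{unit})$. Here $N_b$ is $\max(0,\sum_{h<b}\varphi(p^h)k_h-\sum_{c<b}\varphi(p^c))$: the first sum comes from the factors with $h<b$, and the factors with $h>b$ contribute $p^{k_h}$. The contributions $p$ from the larger members of $B$ enter through the gluing with the higher blocks. Multiplication by $\pi_b^{N_b}p^{\alpha_b}$ on $\O_b$ has Smith form over $\Z_p$ with $N_b=q_b\varphi(p^b)+r_b$, giving $p^{\alpha_b+q_b}I_{\varphi(p^b)-r_b}\oplus p^{\alpha_b+q_b+1}I_{r_b}$. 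This is exactly the $q_b\ge1$ case, since $\pi_b^{e_b}\sim p$ and $\O_b/\pi_b^{r}$ has length $r$.

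The main obstacle is the off-diagonal gluing, that is, showing that the block-triangular matrix is equivalent to its diagonal, apart from the correction $-\delta_b$ when $q_b=0$. We would handle this by an explicit induction on $|B|$, removing the largest element $b_m$. We would write $G=G'\Phi_{p^{b_m}}$ and use the exact sequence $0\to\Phi_{p^{b_m}}R\to R\to\O_{b_m}\to0$ (or the ordering of the opposite filtration). We would then compare the determinantal ideals of $F(C_G)$ with those of $F(C_{G'})\oplus F(C_{\Phi_{p^{b_m}}})$. The resultant $\operatorname{Res}(\Phi_{p^b},\Phi_{p^c})$ is a power of $p$, specifically $p^{\varphi(p^{\min})}$, and this is what introduces the extra $p$'s counted by $\#\{c\in B: c>b\}$ in $\alpha_b$.

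When $q_b=0$ and the block for $b$ is "small", the extra $p$ from a larger $c$ can be absorbed by a unit. This happens precisely when no factor $\Phi_{p^h}$ of $F$ with $b<h<c$ separates $b$ from $c$, and it is what produces $p^{\alpha_b-\delta_b}$ on the unramified part. To establish this we would first verify the two-element cases $B=\{b,c\}$ directly by explicit row operations on the companion-ring basis $\{\Phi_{p^b}\cdot t^j\}$, and then run the induction. Finally, we would confirm the whole formula by checking that the determinant, namely the sum of all exponents, matches $\nu_p\operatorname{Res}(F,G)$ computed from Apostol's resultants \cite{Apostol70}.
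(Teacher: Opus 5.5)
Your induction skeleton (peel off the largest element of $B$ and track how the Smith form changes) is the same as the paper's recursion. However, the inductive step is never supplied, and the mechanism you propose for it is wrong. In a $\Z_p$-basis adapted to your filtration, each diagonal block is multiplication by $F(\zeta_{p^b})$ on a free rank-one $\O_b$-module. Its Smith form is therefore $\mathcal U_{e_b}(v_b)$ with $v_b=\sum_{h<b}\varphi(p^h)k_h+e_b\sum_{h>b}k_h$. The valuation of the generator of the graded piece is irrelevant, so neither the subtraction in \eqref{eq:division} nor the term $\#\{c\in B:c>b\}$ in $\alpha_b$ shows up there. Consequently the off-diagonal part is not a mere ``$-\delta_b$ correction''. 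Take $p=2$, $F=\Phi_2^2$, $G=\Phi_1\Phi_4$, where $\delta_0=\delta_2=0$. The block diagonal is $4\oplus2I_2$, with partial multiplicities $(1,1,2)$, while $F(C_G)\sim1\oplus2\oplus8$. The gluing moves valuation between blocks: it strips $H(\zeta_{p^c})$, of $\pi_c$-valuation $\deg H$, from the new block and multiplies the old block by $p$.

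The missing ingredient is a precise statement with two cases. Let $H=\prod_{b\in B_-}\Phi_{p^b}$ and $Q=\Phi_{p^c}$, with all indices of $F$ and $H$ below $p^c$.
If $0<\deg H<\deg F$, then $H(C_Q)^{-1}F(C_Q)$ is integral and
\[F(C_{HQ})\sim pF(C_H)\oplus H(C_Q)^{-1}F(C_Q).\]
If $\deg F\le\deg H$, the Swap Theorem gives $F(C_{HQ})\sim I\oplus pH(C_F)$ and $F(C_H)\sim I\oplus H(C_F)$. So unit invariant factors stay units and the others gain one factor of $p$; this, not a $p$ ``absorbed by a unit'', is where $\delta_b$ comes from.

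The first equivalence needs genuine work, carried out after the substitution $t\mapsto1+t$:
\begin{itemize}
\item write the off-diagonal block as $WT$, with $T=H(C_Q)^{-1}F(C_Q)$ and $\left[\begin{smallmatrix}W\\H(C_Q)\end{smallmatrix}\right]$ of full column rank mod $p$;
\item show that the coefficients of $F\bmod H$ are divisible enough by $p$ that $p^{-k}F(C_H)$ and $p^kT^{-1}$ are both integral for a single $k$;
\item conclude with an explicit unimodular elimination.
\end{itemize}

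Your proposed tools do not reach this.
\begin{itemize}
\item $F(C_{G'})\oplus F(C_{\Phi_{p^{b_m}}})$ in general has a different Smith form from $F(C_G)$, as the example shows, so comparing determinantal ideals with it yields nothing.
\item Settling $\lvert B\rvert=2$ by hand does not give the step for an arbitrary $G'$.
\item The resultant check fixes only the sum of the exponents, not how they are distributed.
\end{itemize}
A minor slip: for $1\le h<b$ you state $\nu_{\pi_b}(\Phi_{p^h}(\zeta_{p^b}))=p^h$, but it is $\varphi(p^h)$, as your own quotient formula shows. You use the correct value later.
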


Let us stop for a moment to interpret the exact meaning of the ingredients of Theorem \ref{thm:closed}. The left hand side of \eqref{eq:division} is the difference between the degrees of the factors of $F$ and $G$ consisting of those cyclotomic factors of indices less than $p^b$ (or zero if such difference is negative). The quantities $q_b$ and $r_b$ are then obtained as the quotient and remainder after division by $\varphi(p^b)$. The integer $\alpha_b$ in \eqref{eq:gap} counts, with multiplicity, the factors in $F$ and $G$ having index larger than $p^b$. To interpret $\delta_b$, the other integer defined in \eqref{eq:gap}, suppose first that $F$ has a cyclotomic factor of index higher than $p^b$, and call the smallest such index $p^h$. Then, $\delta_b$ counts the cyclotomic factors of $G$ having index strictly between $p^b$ and $p^h$. If, instead, all the factors of $F$ have index less than $p^b$, then $\delta_b$ counts all factors of $G$ with index greater than $p^b$. 
\begin{example}
Take $F(t)=\Phi_2(t)^2$, $G(t)=\Phi_1(t)\Phi_4(t)\Phi_8(t)$, and $p=2$. Therefore, $A=\{1\}$, $B=\{0,2,3\}$, and $k_1=2$. Applying the prescriptions of Theorem \ref{thm:closed},
we readily get the following table.
\[ \begin{array}{c|c|c|c|c|c}
 b&\varphi(2^b)&(q_b,r_b)&\alpha_b&\delta_b&
 \text{invariant factors}\\ \hline
 0&1&(0,0)&4&0&2^4\\
 2&2&(0,1)&1&1&1,\ 2^2\\
 3&4&(0,0)&0&0&1\text{ ($4$ times)}
 \end{array}\]
Rearranging the invariant factors, we see that the Smith form of $F(C_G)$ over $\Z_2$ is $I_5\oplus4\oplus16$.
\end{example}
From \eqref{eq:closed}, we see that at most $2$ distinct partial multiplicities in the Smith form of $F(C_G)$ are associated with each cyclotomic factor of $G$. More precisely, a factor contributes $1$ distinct partial multiplicity if and only if $r_b = 0$, and $2$ distinct partial multiplicities otherwise. When $r_b > 0$, those $2$ distinct partial multiplicities differ by $1$ when
$q_b\geq1$, and by $\delta_b+1$ when $q_b=0$.

\subsection{Global Smith forms for arbitrary indices}

Theorem \ref{thm:global} is our main result, and generalizes Theorem \ref{thm:closed} to arbitrary indices of the cyclotomic factors. Its proof will be given in Section \ref{sec:proof}.

\begin{theorem}\label{thm:global}
Let $A,B$ be finite sets of positive integers, with $B\neq\varnothing$, and let $u(d)$, for $d\in A$, and $v(d)$, for $d\in B$, be positive integers. Let
\[ f(t):=\prod_{d\in A}\Phi_d(t)^{u(d)},\qquad g(t):=\prod_{d\in B}\Phi_d(t)^{v(d)},\]
 $z:=\gcd(f,g)$, $F:=f/z$, $G:=g/z$, $\ell:=\deg z$, and $m:=\deg G$. Suppose that $G$ is squarefree.
For every $d\in A$, define
\[ k(d):=\begin{cases}  \max\{u(d)-v(d),0\},&\text{if }d\in B,\\  u(d),&\text{if }d\notin B,\end{cases}\]
so that $F(t)=\prod_{d\in A}\Phi_d(t)^{k(d)}$. For every prime $p$ and every positive integer $a$ coprime to $p$, define
\[ A_{p,a}:=\{h\in\Z_{\geq0}:ap^h\in A,\ k(ap^h)>0\},\qquad  B_{p,a}:=\{h\in\Z_{\geq0}:\Phi_{ap^h}\mid G\}. \]
For each such pair $(p,a)$ with $B_{p,a}\ne\varnothing$, let $D_{p,a}$ be the diagonal matrix on the right hand side of \eqref{eq:closed} for the sets $A_{p,a}$, $B_{p,a}$, and multiplicities $k_h=k(ap^h)$. Then, for every prime $p$,
\begin{equation}\label{eq:globallocal}
 f(C_g)\sim \left(\bigoplus_{\substack{a\geq1,\ p\nmid a\\B_{p,a}\ne\varnothing}} \bigoplus_{j=1}^{\varphi(a)}D_{p,a}\right)  \oplus0_{\ell\times\ell} \qquad \text{over} \ \Z_p.
\end{equation}
Let $\eta_{p,1}\leq\cdots\leq\eta_{p,m}$ be the $p$-adic valuations of the nonzero diagonal entries on the right hand side of \eqref{eq:globallocal}, arranged in increasing order. Then
\begin{equation}\label{eq:global}
 \SF(f(C_g))=\left(\bigoplus_{i=1}^m d_i\right)\oplus0_{\ell\times\ell},
 \qquad d_i:=\prod_p p^{\eta_{p,i}}.\end{equation}
\end{theorem}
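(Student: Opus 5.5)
First reduce to the coprime case. By \cite[Theorem A]{NoferiniWilliamsBrieskorn} we have $f(C_g)\sim F(C_G)\oplus 0_{\ell\times\ell}$ over $\Z$, hence also over every $\Z_p$. So it suffices to prove that, over $\Z_p$, the matrix $F(C_G)$ is equivalent to the direct sum of $\varphi(a)$ copies of $D_{p,a}$, taken over all $a$ coprime to $p$ with $B_{p,a}\neq\varnothing$. Once \eqref{eq:globallocal} holds for every prime, \eqref{eq:global} follows directly from the second part of Lemma \ref{lem:reordersmith}. The rank of $F(C_G)$ is $m=\deg G$ because $\operatorname{Res}(F,G)\neq0$. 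The sizes also match, since $\sum_b\varphi(ap^b)=\varphi(a)\sum_b\varphi(p^b)$.

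Fix $p$. Group the factors of $G$ according to the coprime-to-$p$ part $a$ of their index: $G=\prod_a G_a$ with $G_a:=\prod_{h\in B_{p,a}}\Phi_{ap^h}$. For $a\neq a'$, the roots of $G_a$ and $G_{a'}$ are roots of unity whose quotient has order not a power of $p$. Hence $\operatorname{Res}(G_a,G_{a'})$ is a $p$-adic unit, since a difference $\zeta-\xi$ of roots of unity is a non-unit above $p$ only if $\zeta/\xi$ has $p$-power order. Theorem \ref{lem:bello} (the splitting when $\operatorname{Res}(g_1,g_2)$ is a unit, applied over the EDD $\Z_p$) then gives $F(C_G)\sim\bigoplus_a F(C_{G_a})$ over $\Z_p$. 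Similarly, write $F=F_aF'_a$, where $F_a:=\prod_{h\in A_{p,a}}\Phi_{ap^h}^{k(ap^h)}$ collects the factors of $F$ with the same coprime-to-$p$ part $a$. Then $\operatorname{Res}(F'_a,G_a)$ is a $p$-adic unit, so $F'_a(C_{G_a})$ is invertible over $\Z_p$, and $F(C_{G_a})\sim F_a(C_{G_a})$.

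It remains to show $F_a(C_{G_a})\sim\bigoplus_{j=1}^{\varphi(a)}D_{p,a}$ over $\Z_p$. Since $p\nmid a$, the cyclotomic polynomial factors as $\Phi_{ap^h}(t)=\prod_{\zeta}\Phi_{p^h}(\zeta^{-1}t)$ over $\O:=\Z_p[\zeta_a]$, where the product runs over the primitive $a$-th roots of unity (up to a reindexing via the Chinese remainder theorem on the roots). This ring $\O$ is unramified over $\Z_p$, so $p$ stays a uniformizer and Smith forms over $\O$ normalized to powers of $p$ determine those over $\Z_p$, because determinantal ideals are compatible with extension. Over $\O$, the $\varphi(a)$ pieces $G_{a,\zeta}(t):=\prod_h\Phi_{p^h}(\zeta^{-1}t)$ have pairwise unit resultants, for the same reason as before. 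Applying Theorem \ref{lem:bello} again together with the unit-resultant reduction on $F$ gives $F_a(C_{G_a})\sim\bigoplus_\zeta F_{a,\zeta}(C_{G_{a,\zeta}})$. The substitution $t\mapsto\zeta t$ is a unimodular similarity on $\O[t]/\langle G_{a,\zeta}\rangle$ (a diagonal change of monomial basis, after rescaling to monic). It identifies each summand with $\prod_h\Phi_{p^h}^{k_h}(C_{\prod_b\Phi_{p^b}})$, which by Theorem \ref{thm:closed} is equivalent to $D_{p,a}$ over $\Z_p$, hence over $\O$.

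The main obstacle is this descent to prime-power indices over the extension $\O$. Theorem \ref{thm:closed} is stated over $\Z_p$, and the splitting results must hold over $\O$, which is a PID, so the Lemma \ref{lem:bello} argument should carry over verbatim. One also has to check carefully that the twist by $\zeta$ preserves the companion-ring equivalence. If the twisting argument proves awkward, the fallback is the Composition Theorem: for prime $a$, write $\Phi_{ap^h}(t)=\Phi_{p^h}(t^a)/\Phi_{p^{h}}(t)$-type identities and induct. The direct route over $\O$ is preferable.
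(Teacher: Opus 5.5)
Your proposal is correct and follows essentially the same route as the paper: reduction by Theorem A, splitting $G$ by coprime-to-$p$ part via Theorem \ref{lem:bello}, discarding the unimodular factors of $F$, splitting again over the primitive $a$-th roots $\zeta$ in an unramified extension, untwisting by the diagonal similarity induced by $t\mapsto\zeta t$ to reach Theorem \ref{thm:closed}, and gluing with Lemma \ref{lem:reordersmith}. The only difference is cosmetic: you certify the unit resultants through the $p$-adic valuation of differences of roots of unity, whereas the paper uses $\overline{\Phi_{ap^h}}=\overline{\Phi_a}^{\,\varphi(p^h)}$ modulo $p$ with Lemma \ref{lem:bezout}, and Hensel's Lemma to place the roots of unity in $\O$; the Composition Theorem fallback you mention is not needed.
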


For every prime $p$, the sets $A_{p,a}$ and $B_{p,a}$ contain the exponents $h$ in the indices $ap^h$ of the factors of $F$ and $G$, respectively. In \eqref{eq:globallocal}, every such pair with $B_{p,a}\ne\varnothing$ is associated with $ \varphi(a)$ diagonal blocks $D_{p,a}$ of the form described by Theorem \ref{thm:closed}. In addition to all these blocks, there are $\ell$ zero diagonal entries associated with the common factors of $f$ and $g$. To obtain the integer invariant factors in \eqref{eq:global}, we follow Lemma \ref{lem:reordersmith}, first reordering the partial multiplicities at each prime and then multiplying together the prime powers occupying the same position.

\begin{example}\label{ex:parameters}
Let $u,v\geq1$, and define 
\[ f(t)=\Phi_1(t)\Phi_6(t)^u\Phi_{24}(t)^v, \qquad g(t)=\Phi_3(t)\Phi_{12}(t)\Phi_{48}(t). \]
These polynomials are coprime, so $F=f$ and $G=g$. For $p=2$, all indices of the factors of $G$ have coprime-to-$2$ part $3$. Thus $A_{2,3}=\{1,3\}$ and $B_{2,3}=\{0,2,4\}$. In the notation of Theorem \ref{thm:closed}, we get $\delta_0=\delta_2=\delta_4=0$. The local blocks with $b=0$ and $b=2$, corresponding to the factors $\Phi_3$
and $\Phi_{12}$ of $G$, give, respectively, the partial multiplicities
\[ u+v+2,\qquad v+1+\left\lfloor\frac{u-1}{2}\right\rfloor,\qquad v+1+\left\lfloor\frac u2\right\rfloor. \]
For the block associated with $\Phi_{48}$, where $b=4$, we write $u+4v-3=:8q+r$, with $0\leq r<8$. Its partial multiplicities are $q$ ($8-r$ times) and $q+1$  ($r$ times). According to Theorem \ref{thm:global}, each partial multiplicity obtained from these three blocks must be
repeated $\varphi(3)=2$ times. Rearranging, the Smith form over $\Z_2$ is therefore
\[  D_2:=2^q I_{16-2r}\oplus2^{q+1}I_{2r}\oplus2^{v+1+\lfloor(u-1)/2\rfloor}I_2 \oplus2^{v+1+\lfloor u/2\rfloor}I_2 \oplus2^{u+v+2}I_2. \]
For $p=3$, the only coprime-to-$3$ part shared by indices of factors of $F$ and $G$ is $1$, with $A_{3,1}=\{0\}$ and $B_{3,1}=\{1\}$. Theorem \ref{thm:global} gives $D_3:=I_{21}\oplus3$. By \cite[Theorem 7.4]{NoferiniWilliamsBrieskorn}, no other prime contributes an elementary
divisor: all the cyclotomic indices involved have only $2$ and $3$ as prime divisors. Hence the Smith form over $\Z$ is $D_2D_3$; multiplying by $D_3$ multiplies only the last entry of $D_2$ by $3$. For a concrete example, when $u=5$ and $v=3$ we obtain
\[ \SF(f(C_g))=2I_4\oplus4I_{12}\oplus64I_4\oplus1024\oplus3072. \qedhere \]
\end{example}

\begin{remark}
The assumption that $G$ is squarefree, which in terms of the notation introduced in Theorem \ref{thm:global} is equivalent to $ v(d)=1  $ whenever $d \in B \setminus A$ and $ v(d) - u(d) \leq 1 $ whenever $d \in A \cap B$,  is strictly weaker than requiring $g$ to be squarefree. For example, after cancellation of common factors,
\[ f(t)=\Phi_2(t)^5\Phi_3(t)\Phi_4(t)^2,\qquad g(t)=\Phi_1(t)\Phi_2(t)^2\Phi_5(t)\Phi_4(t)^3\]
reduces to
\[ F(t)=\Phi_2(t)^3\Phi_3(t),\qquad G(t)=\Phi_1(t)\Phi_4(t)\Phi_5(t).
\]
There is also another obvious symmetric extension of Theorem \ref{thm:global}, namely, combining Theorem \ref{thm:global} with the Swap Theorem \cite[Theorem 3.11]{NoferiniWilliamsApplications} yields a formula also for the case where $F$ is squarefree but $G$ is not.
\end{remark}
\subsection{A corollary for circulants}

Motivated by the many applications that involve integer circulant matrices (see Section \ref{sec:applications}), we specialize Theorem \ref{thm:global} to $g(t)=t^n-1$, and we assume that every common cyclotomic factor has multiplicity $1$ in $f$ (in particular this always holds if $f$ is squarefree). We emphasize that this restriction concerns only factors dividing both
$f$ and $g$; the other factors of $f$ may have arbitrary multiplicities. We explicitly state the resulting Smith form, and we give a short proof to show how it follows from Theorem \ref{thm:global}.

\begin{corollary}\label{thm:originale}
Let $n\geq1$, $g(t):=t^n-1$, and $f(t):=\prod_{d\in A}\Phi_d(t)^{u(d)}$, where $A$ is a finite
set of positive integers and $u(d)$ is a positive integer for each $d\in A$. Suppose that every
cyclotomic factor common to $f$ and $g$ has multiplicity $1$ in $f$, that is, $u(d)=1$ for every $d\in A$ dividing $n$. Define $\ell:=\sum_{\substack{d\in A\\d\mid n}}\varphi(d)$. Given a prime $p$, write $n=\alpha p^\beta$, $p\nmid\alpha$. For every $a\mid\alpha$, define
\[ \mu_{p,a}:=\sum_{\substack{0\leq h\leq\beta\\ap^h\notin A}} \varphi(ap^h), \qquad n_{p,a}:=\sum_{\substack{h>\beta\\ap^h\in A}}u(ap^h).\]
Let $a_1,\ldots,a_s \in \Z$ be the divisors of $\alpha$, ordered so that $n_{p,a_1}\leq\cdots\leq n_{p,a_s}$, and define
\[ D_p:=\bigoplus_{j=1}^s p^{n_{p,a_j}}I_{\mu_{p,a_j}}  \in\Z^{(n-\ell)\times(n-\ell)}.\]
Then,
\begin{equation}\label{eq:easySF}
 \SF\left(f(C_{t^n-1})\right) =\left(\prod_p D_p\right)\oplus0_{\ell\times\ell},
\end{equation}
where the product is over all primes dividing $n\prod_{d\in A}d$.
\end{corollary}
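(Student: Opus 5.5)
We derive this directly from Theorem \ref{thm:global} with $g(t)=t^n-1=\prod_{d\mid n}\Phi_d$, so $B=\{d: d\mid n\}$ with $v(d)=1$. Since $u(d)=1$ for every $d\in A$ dividing $n$, we have $k(d)=0$ for those $d$. Hence $z=\prod_{d\in A,\,d\mid n}\Phi_d$, $\ell=\deg z$ as in the statement, and $G=\prod_{d\mid n,\,d\notin A}\Phi_d$ is squarefree, so the theorem applies. Fix a prime $p$ and write $n=\alpha p^\beta$. A pair $(p,a)$ has $B_{p,a}\neq\varnothing$ only if $a\mid\alpha$, and then $B_{p,a}=\{0\le h\le\beta: ap^h\notin A\}$. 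Because $k(ap^h)=0$ for $h\le\beta$, we get $A_{p,a}=\{h>\beta: ap^h\in A\}$, with $k_h=u(ap^h)$. So every index in $A_{p,a}$ exceeds every index in $B_{p,a}$. This is the structural simplification that should collapse the formula \eqref{eq:closed}.

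Next we evaluate \eqref{eq:closed} in this situation, for $b\in B_{p,a}$.
\begin{itemize}
\item No $h\in A_{p,a}$ satisfies $h<b$, so the left side of \eqref{eq:division} is $\max(0,-\ldots)=0$. Thus $q_b=r_b=0$, and only the block $p^{\alpha_b-\delta_b}I_{\varphi(p^b)}$ survives.
\item Here $\alpha_b=n_{p,a}+\#\{c\in B_{p,a}:c>b\}$.
\item For $\delta_b$: every $c\in B_{p,a}$ with $c>b$ satisfies $c\le\beta$, and every $h\in A_{p,a}$ satisfies $h>\beta\ge c$. So no $h$ lies strictly between $b$ and $c$, and $\delta_b=\#\{c\in B_{p,a}:c>b\}$.
\end{itemize}
Therefore $\alpha_b-\delta_b=n_{p,a}$. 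The block $D_{p,a}$ is then $p^{n_{p,a}}I_{\sum_{b\in B_{p,a}}\varphi(p^b)}$. Repeating it $\varphi(a)$ times, as \eqref{eq:globallocal} prescribes, gives $p^{n_{p,a}}$ with multiplicity $\sum_{b\in B_{p,a}}\varphi(a)\varphi(p^b)=\mu_{p,a}$, using multiplicativity of $\varphi$ since $p\nmid a$. Divisors $a\mid\alpha$ with $B_{p,a}=\varnothing$ have $\mu_{p,a}=0$ and contribute nothing. So over $\Z_p$ we get $f(C_g)\sim D_p\oplus0_{\ell\times\ell}$. We check the size: $\sum_{a\mid\alpha}\mu_{p,a}=\sum_{d\mid n,\,d\notin A}\varphi(d)=n-\ell$. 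Ordering the $a_j$ by $n_{p,a_j}$ makes the diagonal of $D_p$ nondecreasing, so its exponents are exactly $\eta_{p,1}\le\dots\le\eta_{p,n-\ell}$.

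Finally, \eqref{eq:global} gives $d_i=\prod_p p^{\eta_{p,i}}$, which is the $i$-th diagonal entry of $\prod_p D_p$; this is \eqref{eq:easySF}. Primes not dividing $n\prod_{d\in A}d$ have all $n_{p,a}=0$, since then $\beta=0$ and $a=\alpha$ would require $\alpha p^h\in A$ with $h>0$, which is impossible. So $D_p=I$ for such primes and they can be omitted. The only delicate step is the $\delta_b$ computation, namely checking the convention in \eqref{eq:gap} that $\delta_b$ counts all larger $c$ when there is no intervening $h$. There is also a degenerate case: if every divisor of $n$ lies in $A$, then $G=1$, Theorem \ref{thm:global} is not needed, and $f(C_g)\sim 0_{n\times n}$ by \cite[Theorem A]{NoferiniWilliamsBrieskorn}, which agrees with \eqref{eq:easySF} since $D_p$ is then empty.
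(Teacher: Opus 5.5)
Your proposal is correct and follows essentially the same route as the paper. Both specialize Theorem~\ref{thm:global} to $g=t^n-1$ and note that every index in $A_{p,a}$ exceeds $\beta$, hence exceeds every index in $B_{p,a}$, so $q_b=r_b=0$ and $\alpha_b-\delta_b=n_{p,a}$. Both then merge the $\varphi(a)$ copies of each block into $p^{n_{p,a}}I_{\mu_{p,a}}$ and combine the primes via Lemma~\ref{lem:reordersmith}.

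One cosmetic point concerns discarding primes $p\nmid n\prod_{d\in A}d$: the reason is simply that $ap^h\in A$ with $h>0$ would make $p$ divide an element of $A$. This holds for every $a\mid\alpha$, not only $a=\alpha$.
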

\begin{proof}
Specializing the notation of Theorem \ref{thm:global} to $g(t)=t^n-1$, we have $B_{p,a}=\{0\leq h\leq\beta:ap^h\notin A\}$. Fix a prime $p$ and a divisor $a \mid \alpha$ such that $B_{p,a} \neq \varnothing$. By the assumptions, for every factor of $F$ with index $ap^h$ it must hold $h>\beta$, and hence $h \not\in B_{p,a}$. Therefore, for every $b\in B_{p,a}$, the left hand side of \eqref{eq:division} is zero, implying $q_b=r_b=0$. Moreover, since every $h\in A_{p,a}$ satisfies $h>\beta$, $\delta_b=\#\{c\in B_{p,a}:c>b\}$ and $\alpha_b=n_{p,a} + \delta_b$. Substituting these values in \eqref{eq:closed}, we obtain the block $p^{n_{p,a}}I_{\varphi(p^b)}$ for each $b\in B_{p,a}$. In \eqref{eq:globallocal}, there are $\varphi(a)$ copies of every such block. Observe now that $\varphi(a)\sum_{b\in B_{p,a}}\varphi(p^b)=\sum_{b\in B_{p,a}}\varphi(ap^b)=\mu_{p,a}$; therefore, we can combine all the blocks associated with $a$ into $p^{n_{p,a}} I_{\mu_{p,a}}$. We can repeat this argument for every divisor $a = a_1,\ldots,a_s$; since these are ordered so that the corresponding blocks are in increasing divisibility order, we conclude that $D_p$ is the Smith form of $F(C_G)$ over $\Z_p$. It suffices to restrict this analysis to the primes $p$ dividing $n\prod_{d\in A}d$, because otherwise $n_{p,a}=0 \ \forall a$ and $D_p=I$. By Lemma \ref{lem:reordersmith},
$\SF(F(C_G))=\prod_pD_p$ over $\Z$. Finally, \cite[Theorem A]{NoferiniWilliamsBrieskorn} gives
\eqref{eq:easySF}.
\end{proof}
\begin{example}
Let us apply Corollary \ref{thm:originale} by taking $f(t)=\Phi_4(t)\Phi_9(t)\Phi_{12}(t)$ and $n=6$. There are only two relevant primes. At $p=2$, we have $n_{2,1}=n_{2,3}=1$, $\mu_{2,1}=2$, and $\mu_{2,3}=4$, so $D_2=2I_6$. At $p=3$, we have $n_{3,1}=1$, $n_{3,2}=0$, and $\mu_{3,1}=\mu_{3,2}=3$, so $D_3=I_3\oplus3I_3$. Thus,
\[ \SF\left((\Phi_4\Phi_9\Phi_{12})(C_{t^6-1})\right) =2I_3\oplus6I_3.\qedhere \]
\end{example}

\section{Proofs of the main results}\label{sec:proof}

In this section we prove our main results. We begin by tackling Theorem \ref{thm:closed}, with the following strategy: We first analyze the case where  $G = \Phi_{p^b}$ is a single cyclotomic factor; then we prove how the Smith form of $F(C_G)$ changes with respect to that of $F(C_H)$ where $G=H \Phi_{p^c}$ and $p^c$ is larger than all the indices of the cyclotomic factors of $H$. Combining these two technical results yields \eqref{eq:closed}. Once Theorem \ref{thm:closed} is established, we apply it to prove Theorem \ref{thm:global}. For each prime $p$, we group the cyclotomic indices according to their coprime-to-$p$ parts and reduce each group to the prime-power case. Finally, Lemma \ref{lem:reordersmith} combines the resulting local Smith forms to give the Smith form over $\Z$.

We will need some basic number theory; for some background on valuations, local fields and field extensions, we refer to \cite[Chapter II, Sections 3--7]{Neukirch99}.

\subsection{Proof of Theorem \ref{thm:closed}}\label{sec:localproof}

Throughout this subsection, we fix a prime $p$, and all Smith forms and unimodular equivalences are over $\Z_p$. We will also often fix a nonnegative integer $c\geq 0$ and denote by $\zeta$ a primitive $p^c$-th root of unity in the algebraic closure of $\QQ_p$. For $c \geq 1$, we will also write $\O:=\Z_p[\zeta]$ and $\pi:=\zeta-1$, the uniformizer of $\O$; if instead $c=0$, then $\zeta:=1$, $\O:=\Z_p$, and $\pi:=p$.

Corollary \ref{cor:composition}, which is a special case of the Composition Theorem \cite[Theorem 3.13]{NoferiniWilliamsApplications}, will also be crucial.

 \begin{corollary}\label{cor:composition}
Let $f,g\in\Z_p[t]$, with $g$ monic, and define $h(t):=f(1+t)$ and $k(t):=g(1+t)$. Then $f(C_g)$ and $h(C_k)$ are unimodularly similar over $\Z_p$.
\end{corollary}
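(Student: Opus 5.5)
Corollary \ref{cor:composition} is the special case $h(t):=1+t$ of the Composition Theorem \cite[Theorem 3.13]{NoferiniWilliamsApplications}. Since $\deg h=1$, that theorem gives directly that $(f\circ h)(C_{g\circ h})$ is unimodularly similar to a single copy of $f(C_g)$. The only points to check are that the hypotheses still hold and that the conclusion survives when we work over $\Z_p$ instead of $\Z$.

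If one prefers to avoid the general theorem, there is a direct argument that works over any commutative ring $R$, in particular $R=\Z_p$. Write $k(t):=g(1+t)$, which is monic of the same degree $n$ as $g$. The substitution $t\mapsto 1+t$ induces a ring isomorphism $\sigma:R[t]/\langle g\rangle\to R[t]/\langle k\rangle$, $[p(t)]\mapsto[p(1+t)]$. Its inverse is induced by $t\mapsto t-1$, and it is well defined because it maps the ideal $\langle g\rangle$ onto $\langle k\rangle$. By construction $\sigma([f])=[h]$.

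Next we pass to matrices. With respect to the monomial bases (columns $(t^{n-1},\dots,1)^T$), multiplication by $[f]$ on $R[t]/\langle g\rangle$ is represented on coefficient rows by $f(C_g)$, and multiplication by $[h]$ on $R[t]/\langle k\rangle$ is represented by $h(C_k)$, as recalled in the introduction. Let $S$ be the matrix of $\sigma$ in these bases. Then
\[ a^T f(C_g)\, S = \sigma(a\cdot f) = \sigma(a)\,\sigma(f) = a^T S\, h(C_k), \]
so $f(C_g)S=S\,h(C_k)$. The map $\sigma$ sends $(1+t)^j$ back to $t^j$ and vice versa, so $S$ is triangular with ones on the diagonal, given by binomial coefficients. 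Hence $S$ is unimodular over $R$, and $h(C_k)=S^{-1}f(C_g)S$.

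The only place needing care is keeping the row-vector convention consistent, so that the similarity is not accidentally a transpose. This is routine bookkeeping, and since both sides use the same convention, similarity follows.
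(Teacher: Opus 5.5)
Your proposal is correct, and its main line is the paper's: the paper gives no separate proof and records the statement as the case $h(t)=1+t$ (so $\deg h=1$, giving a single copy) of the Composition Theorem \cite[Theorem 3.13]{NoferiniWilliamsApplications}. Your alternative argument is also correct: the substitution isomorphism $\sigma$ and its unitriangular binomial matrix $S$ give $f(C_g)S=S\,h(C_k)$, and this is self-contained and valid over any commutative ring, which removes any need to check how the cited theorem transfers to $\Z_p$.
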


\subsubsection{Smith forms at a single cyclotomic factor}

For every integer $h\geq0$, define $H_h(t):=\Phi_{p^h}(1+t)$ and $e_h:=\varphi(p^h)$.
We now fix $c\geq0$. By \cite[Chapter II, Proposition 7.13]{Neukirch99}, $\O$ is a discrete valuation ring with uniformizer $\pi$, and $\QQ_p(\zeta)$ is an extension of $\QQ_p$ with degree and ramification index $e_c$ and inertia degree $1$. Hence $\O$ has residue field $\mathbb F_p$ and $\nu_\pi(p)=e_c$, where $\nu_\pi$ denotes the valuation normalized by $\nu_\pi(\pi)=1$. For $0\leq h<c$, we have
\begin{equation}\label{eq:cyclident}
 \nu_\pi\left(\Phi_{p^h}(\zeta)\right) =\nu_\pi\left(H_h(\pi)\right)=e_h.
\end{equation}
Indeed, $H_0(t)=t$. For $1\leq h<c$, we have $H_h(t)\equiv t^{e_h}\bmod\langle p\rangle$, so we may
write $H_h(t)=t^{e_h}+pR_h(t)$, with $R_h\in\Z_p[t]$. Observing that $\nu_\pi(\pi^{e_h})=e_h$
and  $\nu_\pi(p R_h(\pi))\geq e_c>e_h$, we get \eqref{eq:cyclident}; see also \cite[Lemma A.1]{Ohashi17}.

\begin{lemma}\label{lem:multiplication}
Let $\zeta$, $\pi$, and $e_c$ be as above, and let $f\in\Z_p[t]$ satisfy $f(\zeta)\ne0$. Consider the Euclidean division $\nu_\pi(f(\zeta))=:qe_c+r$, with $0\leq r<e_c$. Then,
\[\SF(f(C_{\Phi_{p^c}}))=p^q I_{e_c-r}\oplus p^{q+1}I_r.\]
\end{lemma}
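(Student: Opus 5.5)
The plan is to identify $\operatorname{coker} f(C_{\Phi_{p^c}})$ over $\Z_p$ with a quotient of the discrete valuation ring $\O$, and then read off its structure as a $\Z_p$-module.

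First, by the companion ring isomorphism, $\Z_p[C_{\Phi_{p^c}}]\cong\Z_p[t]/\langle\Phi_{p^c}\rangle$. Since $\Phi_{p^c}$ is irreducible over $\QQ_p$ (Eisenstein after the shift $t\mapsto 1+t$, as in the definition of $H_c$), we have $\Z_p[t]/\langle\Phi_{p^c}\rangle\cong\Z_p[\zeta]=\O$. Under this identification, right multiplication of coefficient rows by $f(C_{\Phi_{p^c}})$ corresponds to multiplication by $f(\zeta)$ on $\O$, viewed as a free $\Z_p$-module of rank $e_c$. Hence the cokernel of $f(C_{\Phi_{p^c}})$ over $\Z_p$ is isomorphic to $\O/f(\zeta)\O$. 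Because $\O$ is a DVR with uniformizer $\pi$, this equals $\O/\pi^{\nu}\O$ with $\nu:=\nu_\pi(f(\zeta))=qe_c+r$. The case $c=0$ is trivial: then $\O=\Z_p$, $\pi=p$, $e_0=1$, $r=0$, and the matrix is the $1\times1$ matrix $f(1)$, with valuation $q$.

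Next, for $c\geq1$, I compute the $\Z_p$-module structure of $\O/\pi^{\nu}\O$. Since the extension is totally ramified, we have $p\O=\pi^{e_c}\O$ (as ideals, since $\nu_\pi(p)=e_c$), and $1,\pi,\ldots,\pi^{e_c-1}$ is a $\Z_p$-basis of $\O$. Consequently $\pi^{qe_c+r}\O=p^q\pi^r\O$, and $\pi^r\O$ has $\Z_p$-basis
\[\pi^r,\ldots,\pi^{e_c-1},\ p\cdot 1,\ldots,p\,\pi^{r-1}.\]
The first two facts are standard, from Eisenstein and the structure of totally ramified extensions. For the basis claim, $\pi^{e_c}=p\cdot(\text{unit})$, and the unit lies in $\O$. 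So I would instead argue via lengths. The ideal $\pi^r\O$ contains $p\O$, and $\O/p\O\cong\mathbb F_p[\pi]/(\pi^{e_c})$. Its image there is the span of $\pi^r,\ldots,\pi^{e_c-1}$, of dimension $e_c-r$. Hence $\pi^r\O\supseteq p\O$ and $\O/\pi^r\O\cong\mathbb F_p^{\,r}$ as $\Z_p$-modules. Therefore $\O/\pi^r\O\cong(\Z/p)^r$, and multiplying by $p^q$ (an injective map on the free module $\O$) gives
\[\O/p^q\pi^r\O\;\text{ with }\;\O/p^q\O\cong(\Z_p/p^q)^{e_c},\]
together with the filtration $p^q\O\supseteq p^q\pi^r\O\supseteq p^{q+1}\O$.

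The main obstacle is pinning down the exact invariant factors rather than just the order. For this I would use the $\Z_p$-basis $\pi^r,\ldots,\pi^{e_c-1},\pi^{e_c},\ldots,\pi^{e_c+r-1}$ of $\pi^r\O$, which is a basis because these are $e_c$ consecutive powers of the uniformizer in a totally ramified extension of degree $e_c$. Relative to the basis $1,\ldots,\pi^{e_c-1}$, the last $r$ vectors are $p\pi^i u$ modulo higher terms, where $u$ is a unit. A cleaner route is to compute the number of invariant factors divisible by $p^{q+1}$ via $\dim_{\mathbb F_p}$ of $M/pM$ and $p^{q}M$, where $M:=\O/\pi^{\nu}\O$. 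Here $M/pM=\O/(\pi^{\min(\nu,e_c)})$ has dimension $\min(\nu,e_c)$, which equals $e_c$ when $q\geq1$, so $M$ has exactly $e_c$ cyclic summands when $q\geq1$. Moreover $p^kM\cong\pi^{ke_c}\O/\pi^{\nu}\O$ has length $\nu-ke_c$. Together these counts force the summands to be $(\Z_p/p^q)^{e_c-r}\oplus(\Z_p/p^{q+1})^{r}$. The number of summands of order at least $p^{k+1}$ equals $\dim p^kM/p^{k+1}M=\min(e_c,\max(0,\nu-ke_c))$, which is $e_c$ for $k<q$, $r$ for $k=q$, and $0$ beyond; this is exactly the claimed Smith form.
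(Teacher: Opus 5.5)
Your proposal is correct, but it reaches the invariant factors by a different route from the paper. Both arguments start from the isomorphism $\Z_p[C_{\Phi_{p^c}}]\cong\O$.

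The paper stays at the matrix level. It writes $f(\zeta)=p^q\pi^r u$ with $u\in\O^\times$ and lifts $u$ to a unimodular $V\in\Z_p[C_{\Phi_{p^c}}]$, which gives $f(C_{\Phi_{p^c}})\sim p^q(C_{\Phi_{p^c}}-I)^r$. It then pins down $(C_{\Phi_{p^c}}-I)^r\sim I_{e_c-r}\oplus pI_r$ in two steps. First, $\nu_p(\det)=r$. Second, an external result (\cite[Theorem 3.15]{NoferiniWilliamsApplications}) shows there are exactly $r$ nonunit invariant factors, via $\gcd((t-1)^{e_c},(t-1)^r)$ over $\mathbb F_p$.

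You instead pass to the cokernel $M\cong\O/\pi^{\nu}\O$. You read off its $\Z_p$-module structure from the counts $\dim_{\mathbb F_p}p^kM/p^{k+1}M=\min(e_c,\max(0,\nu-ke_c))$. This uses only that $\O$ is a DVR with residue field $\mathbb F_p$ and $p\O=\pi^{e_c}\O$.

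Your route is self-contained, needing no appeal to the mod-$p$ gcd theorem, and it handles $q=0$ and $q\geq1$ uniformly. The paper's route gives more: an explicit unimodular equivalence $f(C_{\Phi_{p^c}})\sim p^q(C_{\Phi_{p^c}}-I)^r$, which matches the matrix-level style of the later arguments.

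For a clean write-up, drop the middle detours (the tentative bases and the filtration $p^q\O\supseteq p^q\pi^r\O\supseteq p^{q+1}\O$); the length count alone suffices. Also state explicitly that $M$ is finite, so $f(C_{\Phi_{p^c}})$ is nonsingular. The $e_c$ diagonal entries not accounted for by cyclic summands of $M$ are then units, which gives the factor $I_{e_c-r}$ when $q=0$.
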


\begin{proof}
If $c=0$, then the statement is obvious. We thus assume $c \geq 1$ with no loss of generality.

Observe that $\QQ_p(\zeta)$ is a field extension of degree $e_c \cdot 1 = e_c$ over $\QQ_p$, and that $\Phi_{p^c}(\zeta)=0$ with $\deg\Phi_{p^c}=e_c$. Hence, $\Phi_{p^c}$ is the minimal polynomial of $\zeta$. Furthermore, $\O=\Z_p[\zeta]$, implying that  $f(C_{\Phi_{p^c}}) \mapsto f(\zeta)$ is a ring isomorphism, i.e., $\Z_p[C_{\Phi_{p^c}}]\cong\O$. In particular, under this isomorphism we have $C_{\Phi_{p^c}}-I\mapsto \pi=\zeta-1$. On the other hand, defining $u:=f(\zeta)/(p^q\pi^r)$, it is clear that $\nu_\pi(u)=0$ and hence $u\in\O^\times$. Let $U\in\Z_p[t]$ satisfy $U(\zeta)=u$. The preimage of $u$ under the isomorphism is $V:=U(C_{\Phi_{p^c}})$, and therefore $V^{-1}$ is the preimage of $u^{-1}$, and in particular has elements in $\Z_p$. Thus, $V$ is unimodular and
\[ f(C_{\Phi_{p^c}}) =p^q V (C_{\Phi_{p^c}}-I)^r \sim p^q(C_{\Phi_{p^c}}-I)^r.\]
Since $\Phi_{p^c}(1)=p$, we have $\nu_p(\det(C_{\Phi_{p^c}}-I)^r)=r$, implying that the sum of the positive partial multiplicities of $(C_{\Phi_{p^c}}-I)^r$ is $r$. Moreover, $\Phi_{p^c}(t)\equiv(t-1)^{e_c}\bmod\langle p\rangle$
and $\gcd((t-1)^{e_c},(t-1)^r)=(t-1)^r$. Therefore, by \cite[Theorem 3.15]{NoferiniWilliamsApplications}, $(C_{\Phi_{p^c}}-I)^r$ has exactly $r$ nonunit invariant factors. We conclude that
\[ (C_{\Phi_{p^c}}-I)^r\sim I_{e_c-r}\oplus pI_r. \]
\end{proof}

Lemma \ref{lem:multiplication} immediately yields a special case of Theorem \ref{thm:closed}, when $G = \Phi_{p^c}$ is a single cyclotomic factor. For every $h>c\geq 0$, it holds $\zeta^{p^{h-1}}=1$ and hence $\Phi_{p^h}(\zeta)=\sum_{j=0}^{p-1} 1 = p$. Together with
\eqref{eq:cyclident}, this in turn implies
\[ \nu_\pi(F(\zeta)) =\sum_{\substack{h\in A\\h<c}}k_he_h  +e_c\sum_{\substack{h\in A\\h>c}}k_h
 =e_c(\alpha_c+q_c)+r_c.\]
Since $\delta_c=0$, Lemma \ref{lem:multiplication} gives \eqref{eq:closed} when $G=\Phi_{p^c}$.

\begin{example}
Let $p=3$ and $c=1$, so that $\Phi_3(t)=t^2+t+1$ and $e_1=2$. Then,
\[ C_{\Phi_3}-I=\begin{bmatrix}-2&-1\\1&-1\end{bmatrix}  \sim 1\oplus3,\]
matching the prediction of Lemma \ref{lem:multiplication} with $f(t)=t-1$, $q=0$, and $r=1$.
\end{example}

\begin{example}\label{ex:example1}
We now have the tools to prove \eqref{eq:example1}. Letting $f(t)=(t-1)^m$ and $g(t)=t^p-1$, it holds $\gcd(f,g)=t-1$, and hence $\SF(f(C_g))$ has one zero diagonal element. Moreover, $F(t)=(t-1)^{m-1}$ and $G(t)=\Phi_p(t)$. Applying Lemma \ref{lem:multiplication} to $F$, with $c=1$ and $\nu_\pi(F(\zeta))=m-1$, and noting that no prime other than $p$ is relevant to the global Smith form over $\Z$, we obtain \eqref{eq:example1}.
\end{example}

In our next developments, we will frequently use Lemma \ref{lem:multiplication}. Thus, it is convenient to introduce the following notation. For integers $c,v\geq 0$, we denote the corresponding list of partial multiplicities by
\begin{equation}\label{eq:uniformizer}
 \mathcal U_{e_c}(v):=(\underbrace{q,\ldots,q}_{e_c-r}, \underbrace{q+1,\ldots,q+1}_{r}), \qquad v=:qe_c+r,\quad 0\leq r<e_c.
\end{equation}
Note in particular that if $c=0$ then $\mathcal U_1(v)=(v)$.

\subsubsection{Adding a cyclotomic factor}

Our next aim is to study how the Smith form changes when a new cyclotomic factor (under the assumptions explained at the beginning of Section \ref{sec:proof}) is added to $G$. We first prove Theorem \ref{thm:collage}, and then obtain the general relation in Lemma \ref{thm:local}.

Note that, in the statement of Theorem \ref{thm:collage}, $H(C_Q)$ is never unimodular over $\Z_p$. Indeed, by \cite[Theorem 7.4]{NoferiniWilliamsBrieskorn} and using that the determinant is multiplicative, $\lvert\det H(C_Q)\rvert=\lvert\operatorname{Res}(H,Q)\rvert=p^{\deg H}$, and by assumption $\deg H>0$. Hence, $H(C_Q)^{-1}$ has entries in $\QQ_p$ and it is a priori nontrivial that the entries of $H(C_Q)^{-1}F(C_Q)$ are instead in $\Z_p$.

\begin{theorem}\label{thm:collage}
Let $c\geq1$, and let $A,B_-$ be disjoint subsets of $\{0,\ldots,c-1\}$. Given positive integers
$\{k_h\}_{h \in A}$, define
\[ F(t):=\prod_{h\in A}\Phi_{p^h}(t)^{k_h},\qquad  H(t):=\prod_{b\in B_-}\Phi_{p^b}(t),\qquad Q(t):=\Phi_{p^c}(t).\]
Suppose that $0<\deg H<\deg F$. Then $H(C_Q)^{-1}F(C_Q)$ has entries in $\Z_p$, and
\begin{equation}\label{eq:wedontprovethat}
 F(C_{HQ})\sim pF(C_H)\oplus H(C_Q)^{-1}F(C_Q).
\end{equation}
In particular, if the invariant factors of $F(C_H)$ are $p^{\eta_1},\ldots,p^{\eta_{\deg H}}$, then the partial multiplicities of $F(C_{HQ})$ over $\Z_p$ are obtained by joining the two lists
\[ (\eta_1+1,\ldots,\eta_{\deg H}+1),\qquad \mathcal U_{\varphi(p^c)}(\deg F-\deg H)\]
and arranging their entries in increasing order, where $\mathcal U_{\varphi(p^c)}$ is defined as in \eqref{eq:uniformizer}.
\end{theorem}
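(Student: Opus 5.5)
The plan is to work module-theoretically in the companion ring $R:=\Z_p[t]/\langle HQ\rangle\cong\Z_p[C_{HQ}]$. Since unimodular equivalence over the PID $\Z_p$ is detected by the cokernel up to isomorphism (together with the size of the matrix), it suffices to show that $R/FR$ is isomorphic to the direct sum of the cokernels of $pF(C_H)$ and $H(C_Q)^{-1}F(C_Q)$. The sizes match, since $\deg H+\deg Q=\deg(HQ)$. The "in particular" part will then follow from Lemma \ref{lem:multiplication} applied to the second summand, once its $\pi$-adic valuation is computed.

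\emph{Step 1 (the key congruence).} Every root of $H$ is a $p^b$-th root of unity with $b<c$, and $H$ is squarefree. Hence $t^{p^{c-1}}\equiv1\bmod H$, and so
\[ Q(t)=\Phi_p\bigl(t^{p^{c-1}}\bigr)\equiv\Phi_p(1)=p \bmod H. \]
Thus the ideal $\langle H,Q\rangle$ equals $\langle H,p\rangle$, and $Q$ acts on $\Z_p[t]/\langle H\rangle$ as multiplication by $p$. This is the source of the factor $p$ in front of $F(C_H)$.

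\emph{Step 2 (integrality and the second block).} Let $\zeta$ be a primitive $p^c$-th root of unity, and use \eqref{eq:cyclident} for both $F$ and $H$. This gives
\[ \nu_\pi(F(\zeta))-\nu_\pi(H(\zeta))=\sum_{h\in A}k_he_h-\sum_{b\in B_-}e_b=\deg F-\deg H>0. \]
Therefore $F(\zeta)/H(\zeta)\in\O$. Via the isomorphism $\Z_p[C_Q]\cong\O$ used in the proof of Lemma \ref{lem:multiplication}, this says that $H(C_Q)^{-1}F(C_Q)=X(C_Q)$ for some $X\in\Z_p[t]$. By the same lemma, its partial multiplicities are $\mathcal U_{\varphi(p^c)}(\deg F-\deg H)$. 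Consequently $F\equiv HX\bmod Q$, so we can write
\[ F=HX+QY,\qquad X,Y\in\Z_p[t]. \]

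\emph{Step 3 (splitting the cokernel).} Consider the filtration $HR\subset R$, where $HR\cong\Z_p[t]/\langle Q\rangle$ and $R/HR\cong\Z_p[t]/\langle H\rangle$; it gives a block-triangular form of $F(C_{HQ})$ with diagonal blocks $F(C_Q)$ and $F(C_H)$. I would rather use the finer decomposition $F=HX+QY$ together with the sublattice $HR+QR\subset R$, which by Step 1 has index $p^{\deg H}$. On $QR\cong\Z_p[t]/\langle H\rangle$, multiplication by $F$ acts as $QY$, i.e.\ as $pY$ by Step 1; and modulo $H$ we have $pY\equiv QY\equiv F$. On $HR\cong\Z_p[t]/\langle Q\rangle$, multiplication by $F$ acts as $HX$. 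The concrete aim is to exhibit $\Z_p$-bases of $R$ (source) and of $FR$ (target) in which the matrix becomes $pF(C_H)\oplus X(C_Q)$. A natural candidate on the source side is the elements $Q t^i$ ($0\le i<\deg H$) together with $t^j$ ($0\le j<\deg Q$), which is a basis of $R$ because $\deg Q$ plus $\deg H$ equals $\deg(HQ)$ and $Q$ is monic. As a consistency check, the determinants agree: $\det F(C_H)\det F(C_Q)$ equals $p^{\deg H}\det F(C_H)\cdot\det F(C_Q)/p^{\deg H}$.

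I expect Step 3 to be the main obstacle: killing the off-diagonal coupling between the two pieces. The naive filtration only gives a triangular form, and the CRT splitting fails because $\operatorname{Res}(H,Q)$ is not a unit. First I would try row and column operations that use $Q\equiv p\bmod H$ to absorb the off-diagonal block into $pF(C_H)$, exploiting that it has a factor $p$ in the right place. Here the hypothesis $\deg H<\deg F$ (so that $X$ is divisible by $\pi$) should be exactly what makes the coupling term land in the image. Failing that, I would compare Fitting ideals or minors of both sides, using the determinant identity above as the top-order check.
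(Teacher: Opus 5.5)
Your Steps 1 and 2 are correct. Step 2 is essentially how the paper gets the integrality of $T:=H(C_Q)^{-1}F(C_Q)$ and its partial multiplicities $\mathcal U_{\varphi(p^c)}(\deg F-\deg H)$.

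The gap is Step 3, which is the entire content of \eqref{eq:wedontprovethat}, and which you only describe as a plan. None of the structures you list settles it:
your basis $\{t^j\}_{j<\deg Q}\cup\{Qt^i\}_{i<\deg H}$, like the filtration $HR\subset R$, only gives a block-triangular matrix with diagonal blocks $F(C_Q)$ and $F(C_H)$. The $F$-stable sublattice $L:=HR+QR=HR\oplus QR$, on which $F$ acts as $F(C_Q)\oplus F(C_H)$, has index $p^{\deg H}$ in $R$, and $L/FL$ is in general not isomorphic to $R/FR$. In the paper's example $p=2$, $F=\Phi_2^2$, $H=\Phi_1$, $Q=\Phi_4$, it is $(\Z/2\Z)^2\oplus\Z/4\Z$ rather than $\Z/2\Z\oplus\Z/8\Z$. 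The target form moves a factor $p^{\deg H}$ of the determinant from the $Q$-block to the $H$-block. Whether that is possible is decided entirely by the off-diagonal block, about which the proposal says nothing. ``Row and column operations using $Q\equiv p\bmod H$'' and ``comparing Fitting ideals'' are not yet arguments. Your heuristic for $\deg H<\deg F$ also points the wrong way: the positive valuation of $X(\zeta)$ does not kill the coupling. It creates denominators in $T^{-1}$ that must be compensated.

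What is missing are the two $\pi$-adic estimates on which the paper's proof rests. They are made after the substitution $t=1+x$; write $b(x):=F(1+x)$, $h(x):=H(1+x)$, $q(x):=Q(1+x)$, so that in these coordinates $T=h(C_q)^{-1}b(C_q)$.

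First, in the basis $\{x^i\}_{i<\deg H}\cup\{h\,x^j\}_{j<\varphi(p^c)}$ of $\Z_p[x]/\langle hq\rangle$, the off-diagonal block factors as $WT$. The rows of $W$ are the coordinates, in the basis $\{\pi^j\}$ of $\O$, of $w_i:=\pi^i-r_i(\pi)/b(\pi)$ with $r_i:=x^ib\bmod h$. The bound $\nu_\pi(r_i(\pi))>\deg F+i$ of Lemma \ref{lem:restocoeff} gives $\nu_\pi(w_i)=i$. Hence $W$ is integral and $\left[\begin{smallmatrix}W\\h(C_q)\end{smallmatrix}\right]$ has full column rank modulo $p$ (Lemma \ref{lem:compablocchi}). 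This yields $S_1W+S_2h(C_q)=I$. Since $\SF(h(C_q))=I\oplus pI_{\deg H}$, the matrix $\left[\begin{smallmatrix}pI&-pWh(C_q)^{-1}\\S_1&S_2\end{smallmatrix}\right]$ is unimodular, and it turns the triangular form into $\left[\begin{smallmatrix}p\,b(C_h)&0\\S_1b(C_h)&T\end{smallmatrix}\right]$.

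Second, the block $S_1b(C_h)$ can be cleared by a column operation only if $T^{-1}S_1b(C_h)$ is integral. Lemma \ref{lem:restocoeff} also shows that every coefficient of $b\bmod h$ is divisible by $p^\delta$, with $\delta:=\lceil(\deg F-\deg H)/\varphi(p^c)\rceil$. Meanwhile $p^\delta T^{-1}$ is integral, because $T$ corresponds to $b(\pi)/h(\pi)$, whose $\pi$-adic valuation $\deg F-\deg H$ is at most $\delta\varphi(p^c)$ (Lemma \ref{lem:elimina}).

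Without these estimates, or a substitute for them, Step 3 does not go through.
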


As a preliminary step, we need to establish the technical Lemmata \ref{lem:restocoeff}, \ref{lem:compablocchi} and \ref{lem:elimina}.
\begin{lemma}\label{lem:restocoeff}
Fix $c\geq1$, and let $\pi$ and $\O$ be defined as in Subsection \ref{sec:localproof}. Let $F$ and $H$ be products, with repetitions allowed, of polynomials of the form $\Phi_{p^h}(1+x)$ with $0\leq h<c$. Suppose that $\deg F\geq\deg H>0$, and write $R(x):=F(x)\bmod H(x)=\sum_{j=0}^{\deg H-1}a_jx^j$. Then
\begin{equation}\label{eq:restocoeff}
 \varphi(p^c)\nu_p(a_j)+j>\deg F\qquad(0\leq j<\deg H).
\end{equation}
In particular, $\nu_\pi(R(\pi))>\deg F$.
\end{lemma}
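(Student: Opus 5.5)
The plan is to introduce a weighted ($p$-adic Gauss-type) valuation on $\Z_p[x]$ adapted to the uniformizer $\pi$, show that both $F$ and $H$ have a unique monomial of minimal weight (their leading monomial), and then check that the Euclidean division of $F$ by $H$ never decreases weights, strictly increasing them on every reduced term. Concretely, for $P(x)=\sum_j b_jx^j\in\Z_p[x]$ set
\[ w(P):=\min_{j:\,b_j\neq0}\bigl(\varphi(p^c)\nu_p(b_j)+j\bigr),\qquad w(0):=+\infty. \]
This is the monomial valuation with $w(p)=e_c$, $w(x)=1$, and it is multiplicative: $w(PQ)=w(P)+w(Q)$. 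Since $\nu_\pi(p)=e_c$ and $\nu_\pi(\pi)=1$, we get $\nu_\pi(b_j\pi^j)=e_c\nu_p(b_j)+j$, so $\nu_\pi(P(\pi))\geq w(P)$. The inequality \eqref{eq:restocoeff} is exactly the statement $w(R)>\deg F$, and the final claim $\nu_\pi(R(\pi))>\deg F$ then follows at once.

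First I establish the structure of the factors. As in the proof of \eqref{eq:cyclident}, for $0\leq h<c$ we have $H_h(x)=x^{e_h}+pR_h(x)$, with $\deg R_h<e_h$ (for $h=0$, $H_0=x$ and $R_0=0$). Every monomial of $pR_h$ has weight at least $e_c>e_h$. Hence $H_h=x^{e_h}+(\text{terms of weight}>e_h)$. Multiplying such factors and using multiplicativity, every term in the expansion of the product other than the product of the leading monomials contains at least one factor $pR_h$. Such a term therefore has weight strictly larger than the total degree. Consequently
\[ F=x^{n}+E_F,\quad w(E_F)>n:=\deg F,\qquad H=x^{m}+E_H,\quad w(E_H)>m:=\deg H, \]
with $\deg E_F<n$ and $\deg E_H<m$.

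Next I run the division algorithm one monomial at a time. Suppose $P$ has a monomial $bx^d$ with $d\geq m$. Replace it by $bx^{d-m}(x^m-H)=-bx^{d-m}E_H$, which does not change the residue modulo $H$. By multiplicativity,
\[ w(bx^{d-m}E_H)=w(bx^d)-m+w(E_H)>w(bx^d). \]
So each reduction step replaces a monomial by terms of strictly larger weight and strictly smaller degree. Starting from $P=F$, every monomial of $F$ has weight at least $n$: the leading one has weight exactly $n$, and the others have weight $>n$. The leading monomial $x^n$ has $n\geq m$ and is therefore reduced, producing terms of weight $>n$. Every other monomial, whether original or produced later, already has weight $>n$, and this bound is preserved under further reductions. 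The process terminates after finitely many steps because degrees strictly decrease, and it ends at $R=F\bmod H$, with $w(R)>n$. This is \eqref{eq:restocoeff}.

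The main obstacle is the bookkeeping showing that the unique minimal-weight monomial $x^n$ of $F$ is always eliminated. If $n<m$ this would fail, and it is exactly why the hypothesis $\deg F\geq\deg H$ is needed. In the degenerate case $n=m$, the first step already gives $R=F-H=E_F-E_H$, with weight $>n$ directly. I expect the rest, namely multiplicativity of $w$ and the estimate $w(pR_h)\geq e_c>e_h$, to be routine.
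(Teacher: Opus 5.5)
Your proof is correct and is essentially the paper's argument. Your weight $w$ is the $\pi$-adic Gauss valuation on $\O[y]$ pulled back along $x=\pi y$, so your decomposition $F=x^{\deg F}+E_F$ with $w(E_F)>\deg F$ is the paper's congruence $\pi^{-\deg F}F(\pi y)\equiv y^{\deg F}\bmod\pi$, and your monomial-by-monomial reduction is an inline proof of the fact the paper cites from \cite[Lemma 7.3]{NoferiniWilliamsBrieskorn}: a remainder modulo a monic divisor inherits any common divisor of the dividend's coefficients.
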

\begin{proof}
For notational convenience, we set within this proof $D:=\deg F$, $d:=\deg H$, and $e_h:=\varphi(p^h)$ for $0\leq h\leq c$. When $1\leq h<c$, the nonleading coefficients of $\Phi_{p^h}(1+x)$ are divisible by $p$, and $e_h<e_c=\nu_\pi(p)$. Thus, working in $\O[y]$,
\[ \pi^{-e_h} \Phi_{p^h}(1+\pi y) \equiv   y^{e_h} \bmod \langle \pi \rangle.\]
The same conclusion holds for $h=0$, because $\pi^{-1}\Phi_1(1+\pi y)=y$. It follows that $\pi^{-D}F(\pi y)$ and $\pi^{-d}H(\pi y)$ are monic polynomials in $\O[y]$, congruent to $y^D$ and $y^d$, respectively, modulo $\langle\pi\rangle$. Taking into account $D\geq d$, we conclude that every coefficient of
\begin{equation}\label{eq:allpimultiples}
 \pi^{-D}F(\pi y) - y^{D-d}\pi^{-d}H(\pi y)
\end{equation}
is a multiple of $\pi$. Consider the Euclidean polynomial division $a(y)=q(y)b(y)+r(y)$ where $b(y)$ is monic and the involved polynomials have coefficients in a GCD domain. By \cite[Lemma 7.3]{NoferiniWilliamsBrieskorn}, a common divisor of the coefficients of the dividend $a$ also divides every coefficient of the remainder $r$. Specializing this result to $\O[y]$, with monic divisor $\pi^{-d}H(\pi y)$ and dividend \eqref{eq:allpimultiples}, the remainder is $\pi^{-D}R(\pi y)$. Hence, each coefficient $\pi^{j-D}a_j$ is divisible by $\pi$, and thus $e_c\nu_p(a_j)+j-D>0$, proving \eqref{eq:restocoeff}. Finally, each addend $a_j\pi^j$ has valuation greater than $D$, and therefore the same holds for $R(\pi)$ which is their sum.
\end{proof}

Lemma \ref{lem:compablocchi} builds on, and enhances, \cite[Lemmata 5.1 and 5.4]{NoferiniWilliamsBrieskorn}.

\begin{lemma}\label{lem:compablocchi}
Fix $c\geq1$, and define $q(x):=\Phi_{p^c}(1+x)$. Let $b$ and $h$ be products, with repetitions allowed, of polynomials of the form $\Phi_{p^j}(1+x)$ with $0\leq j<c$. Suppose that $0<\deg h<\deg b$ and $\deg h\leq\varphi(p^c)$. Then $T:=h(C_q)^{-1}b(C_q)$ is nonsingular and belongs to $\Z_p[C_q]$. Moreover, there exist a unimodular matrix $V$ and a matrix $W\in\Z_p^{(\deg h)\times\varphi(p^c)}$ such that
\begin{equation}\label{eq:blocktriang}
 Vb(C_{hq})V^{-1}
 =\begin{bmatrix}b(C_h)&WT\\0&b(C_q)\end{bmatrix},
\end{equation}
and $\mathrm{rank}\left(\begin{bmatrix}W\\h(C_q)\end{bmatrix} \bmod \langle p \rangle \right) = \varphi(p^c)$.
\end{lemma}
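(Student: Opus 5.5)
We would work inside the companion ring $\Z_p[t]/\langle hq\rangle$ with the basis adapted to the factorization $hq$. Since $h$ and $q$ are coprime over $\QQ_p$, multiplication by $b$ modulo $hq$ can be written block-triangularly with respect to the submodule $h\cdot\Z_p[t]/\langle hq\rangle\cong\Z_p[t]/\langle q\rangle$.

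Concretely, we take as new basis of $\Z_p[t]_{<\deg h+\varphi(p^c)}$ the polynomials $t^j h(t)$, for $0\le j<\varphi(p^c)$, together with $t^i$, for $0\le i<\deg h$. Since $h$ is monic, the change of basis from the monomial basis is triangular with unit diagonal, hence unimodular; call it $V$. Multiplication by $b$ maps the submodule $h\Z_p[t]/\langle hq\rangle$ to itself. On that submodule it acts, via $h\cdot s\mapsto s$, as $b(C_q)$. This gives the zero block and the bottom-right block $b(C_q)$.

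On the quotient, which is $\Z_p[t]/\langle h\rangle$ with basis $t^i$, multiplication by $b$ acts as $b(C_h)$. The off-diagonal block records, for each $t^i$, the component of $b t^i \bmod hq$ in the submodule. Write $b t^i=(bt^i \bmod h)+h\,s_i$, so that $s_i=(bt^i-(bt^i\bmod h))/h$, reduced modulo $q$.

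To factor this block as $WT$, we use the unimodular similarity of \cite[Lemma 5.1]{NoferiniWilliamsBrieskorn}, which is the starting point being enhanced here, and then check integrality. Note that $h\, s_i\equiv bt^i-(bt^i\bmod h)\pmod q$. Hence the row of $s_i$ in $\Z_p[C_q]$-coordinates equals the row of $bt^i-R_i$ times $h(C_q)^{-1}$, where $R_i:=bt^i\bmod h$. Write $bt^i = b\cdot t^i$; modulo $q$, multiplication by $b$ is $b(C_q)$ and $h(C_q)^{-1}b(C_q)=T$ commutes. So the row is $(e_i-\rho_i h(C_q)^{-1})\,b(C_q)$, where $e_i$ is the coefficient row of $t^i$, and $\rho_i$ is defined by $R_i$ being $\rho_i\,b(C_q)$ in the field $\QQ_p[C_q]$.

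The cleaner route is to define $W$ directly by the rows $w_i:=$ coefficients of $(t^i h - R_i\cdot h/b)\bmod q$. It is better still to avoid $1/b$: we set the block to be $[\,s_i \bmod q\,]_i$, put $W:=[\,s_i\bmod q\,]\,T^{-1}$, and show $W$ is integral.

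Integrality is the crux. We compute with the isomorphism $\Z_p[C_q]\cong\O=\Z_p[\zeta]$ and $x\mapsto\pi$ (after the shift of Corollary \ref{cor:composition}, which is why everything is expressed in the variable $x$). We need $\nu_\pi\bigl(s_i(\pi)\bigr)\ge\nu_\pi(T)=\deg b-\deg h$, using \eqref{eq:cyclident}. Now $s_i(\pi)=(b(\pi)\pi^i-R_i(\pi))/h(\pi)$. We have $\nu_\pi(b(\pi)\pi^i)=\deg b+i$, and Lemma \ref{lem:restocoeff} applied to $bx^i$ and $h$ gives $\nu_\pi(R_i(\pi))>\deg b+i\ge\deg b$. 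Dividing by $h(\pi)$, of valuation $\deg h$, the quotient $s_i(\pi)/T$ lies in $\O$. Thus each row of $W$, namely $s_i(\zeta)/T(\zeta)$ viewed in $\Z_p[C_q]$ acting on the row $e_1$, is integral. This gives $W\in\Z_p^{\deg h\times\varphi(p^c)}$. Nonsingularity of $T$ and $T\in\Z_p[C_q]$ follow from $\nu_\pi(b(\pi))\ge\nu_\pi(h(\pi))$ together with the isomorphism, as in the proof of Lemma \ref{lem:multiplication}.

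The final step, and the main obstacle, is the rank condition modulo $p$. Modulo $\pi$, we have $s_i(\pi)/T(\pi)\equiv \pi^i\cdot\bigl(1-R_i(\pi)/(b(\pi)\pi^i)\bigr)\equiv\pi^i$ up to a unit. So row $i$ of $W$ corresponds to the element $\pi^i u_i$ with $u_i\in\O^\times$. The rows of $h(C_q)$ correspond to $\pi^{j}h(\pi)$, which have valuations $\deg h+j$ for $0\le j<\varphi(p^c)$.

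Modulo $p=\pi^{e_c}\cdot$unit, $\O/p\O$ is the $\mathbb F_p$-space with basis $1,\pi,\ldots,\pi^{e_c-1}$. Since $\deg h\le\varphi(p^c)$, the elements $\pi^iu_i$ for $i<\deg h$ and $\pi^{\deg h+j}h(\pi)/\pi^{\deg h}$ for $0\le j<e_c-\deg h$ form a triangular system hitting every valuation $0,\dots,e_c-1$ exactly once. Hence they span $\O/p\O$, which gives rank $\varphi(p^c)$.

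The delicate point is translating "row of a matrix in $\Z_p[C_q]$" into "element of $\O$" consistently with the row convention. We would handle this by identifying each row vector $a^T$ with the element $a^T$ applied to the monomial column evaluated at $\zeta$, which is exactly the paper's convention.
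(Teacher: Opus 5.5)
Your proposal is correct and follows essentially the paper's route: the same basis change $V$, the same off-diagonal block given by $s_i=(bx^i-r_i)/h \bmod q$, and your $W:=MT^{-1}$ is exactly the paper's $W$, whose rows correspond to $w_i=\pi^i-r_i(\pi)/b(\pi)$. Integrality comes from Lemma \ref{lem:restocoeff} and the rank condition from the distinct valuations $0,\dots,\varphi(p^c)-1$, as in the paper; you only define $W$ from the block rather than checking $WT$ against it, use a triangularity argument instead of a contradiction, and should evaluate the monomial column at $\pi$ (the root of $q(x)=\Phi_{p^c}(1+x)$), not at $\zeta$.
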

\begin{proof}
We set $D:=\deg b$, $d:=\deg h$, and $e_c:=\varphi(p^c)$ within the proof. Recall that $\pi=\zeta-1$, where $\zeta$ is a primitive $p^c$-th root of unity. By \eqref{eq:cyclident},
$\nu_\pi(h(\pi))=d$ and $\nu_\pi(b(\pi))=D$. Hence $b(\pi)/h(\pi)\in\O$. Under the companion ring isomorphism \cite[Theorem 3.1]{NoferiniWilliamsBrieskorn} $\Z_p[C_q]\cong\O$, which sends $C_q$ to $\pi$, the preimage of this nonzero element is $T$. Thus $T\in\Z_p[C_q]$ is nonsingular.

To obtain \eqref{eq:blocktriang}, we define $\Lambda_k$ and $V$ by
\[
 \Lambda_k(x):=\begin{bmatrix}x^{k-1}\\\vdots\\1\end{bmatrix},
 \qquad
 V\Lambda_{d+e_c}(x)
 :=\begin{bmatrix}\Lambda_d(x)\\h(x)\Lambda_{e_c}(x)\end{bmatrix}.
\]
Since $h$ is monic and the bases $\Lambda_{d+e_c}(x)$ and (up to a row permutation) $V\Lambda_{d+e_c}(x)$ are both degree graded, $V$ is the product of a permutation matrix times a triangular matrix with unit diagonal; hence $V$ is unimodular.

For $0\leq i<d$, let $r_i:=(bx^i)\bmod h$, so that
\begin{equation}\label{eq:r} b(C_h) \Lambda_d(x) = \begin{bmatrix} r_{d-1}\\ \vdots\\ r_0\end{bmatrix}. \end{equation}
Since $x^i=[\Phi_1(1+x)]^i$, the polynomial $bx^i$ also has the form required by Lemma \ref{lem:restocoeff}, and its degree is $D+i$. Thus, by Lemma \ref{lem:restocoeff}, $\nu_\pi(r_i(\pi))>D+i$. We now define $w_i:=\pi^i-r_i(\pi)/b(\pi)$ for $i=0,\dots,d-1$. Then
\begin{equation}\label{eq:piadicvals}
 \nu_\pi(w_i-\pi^i) =\nu_\pi(r_i(\pi))-\nu_\pi(b(\pi)) >D+i-D=i.
\end{equation}
Hence $w_i-\pi^i\in\pi^{i+1}\O$, implying in turn $w_i\in\O$. Observe that $\O=\Z_p[\pi]$ is a free $\Z_p$-module, and it admits the monomial basis $\{\pi^{e_c-1},\ldots,\pi,1\}$.
Therefore, for $0\leq i<d$, $w_i$ has a unique $\Z_p$-linear expansion in this basis. Equivalently, there exists a  unique matrix $W\in\Z_p^{d\times e_c}$ such that
\begin{equation}\label{eq:W}
 W\Lambda_{e_c}(\pi)=\begin{bmatrix}w_{d-1}\\\vdots\\w_0\end{bmatrix}.
\end{equation}
Observing that $T\Lambda_{e_c}(\pi)=(b(\pi)/h(\pi))\Lambda_{e_c}(\pi)$, using also \eqref{eq:r} and \eqref{eq:W} we get
\[  WT\Lambda_{e_c}(\pi) =\frac{b(\pi)}{h(\pi)}W\Lambda_{e_c}(\pi) =\frac{b(\pi)\Lambda_d(\pi)-b(C_h)\Lambda_d(\pi)}{h(\pi)}.\]
Recall that $q(\pi)=\Phi_{p^c}(\zeta)=0$.  Hence, a polynomial in $\Z_p[x]$ vanishes at $\pi$ if and only if it is divisible by $q$. By \eqref{eq:r} and the definition of $r_i$, $b(x)\Lambda_d(x)-b(C_h)\Lambda_d(x)$ is divisible by $h$, and hence we conclude that
\[ \frac{b(x)\Lambda_d(x)-b(C_h)\Lambda_d(x)}{h(x)} \equiv WT\Lambda_{e_c}(x)\bmod\langle q\rangle.\]
Multiplying by $h(x)$, and using $b(x)\Lambda_{e_c}(x)\equiv b(C_q)\Lambda_{e_c}(x)
\bmod\langle q\rangle$, we obtain, modulo $\langle hq\rangle$,
\[ Vb(C_{hq})\Lambda_{d+e_c}(x) \equiv\begin{bmatrix} b(C_h)\Lambda_d(x)+h(x)WT\Lambda_{e_c}(x)\\
 h(x)b(C_q)\Lambda_{e_c}(x) \end{bmatrix} =\begin{bmatrix}b(C_h)&WT\\0&b(C_q)\end{bmatrix} V\Lambda_{d+e_c}(x).\]
This is a congruence of vectors having degree less than $d+e_c$, and hence it is in fact an equality, proving \eqref{eq:blocktriang}.

To prove the last part of the statement, we consider
$ S:=\begin{bmatrix}W\\\begin{bmatrix}0&I_{e_c-d}\end{bmatrix}h(C_q)\end{bmatrix}\in\Z_p^{e_c\times e_c}$.  $S$ is a square submatrix of $\begin{bmatrix}W\\h(C_q)\end{bmatrix}$, and we claim that it is nonsingular modulo $p$.
To prove the claim, note that, by \eqref{eq:W} and $h(C_q)\Lambda_{e_c}(\pi)=h(\pi)\Lambda_{e_c}(\pi)$,
\[ S\Lambda_{e_c}(\pi)= \left[\begin{array}{c}  \begin{matrix}w_{d-1}\\\vdots\\w_0\end{matrix}\\[0.5ex]  \begin{matrix}h(\pi)\pi^{e_c-d-1}\\\vdots\\h(\pi)\pi\\h(\pi)\end{matrix} \end{array}\right] \begin{array}{l} \left.\vphantom{\begin{matrix}w_{d-1}\\\vdots\\w_0\end{matrix}}\right\}  \ \in \O^{d}\\[0.5ex]  \left.\vphantom{\begin{matrix}h(\pi)\pi^{e_c-d-1}\\\vdots\\h(\pi)\pi\\h(\pi)\end{matrix}}\right\}
  \ \in \O^{e_c-d}
 \end{array}.\]
By \eqref{eq:piadicvals}, we have $\nu_\pi(w_i)=i$; moreover, $\nu_\pi(h(\pi))=d$. We conclude that elements of $S \Lambda_{e_c}(\pi)$ have pairwise distinct valuations in $\{0,\ldots,e_c-1\}$. Suppose for a contradiction that $S \bmod p$ is singular. Then, there exists a nonzero row vector $y^T \in \{0,\ldots,p-1\}^{1\times e_c}$ such that $y^TS\equiv0\bmod p$. Since $\Lambda_{e_c}(\pi)$ has entries in $\O$, we then have (recalling $\nu_\pi(p)=e_c$)
$y^TS\Lambda_{e_c}(\pi)\in p\O \Rightarrow \nu_\pi\left(y^TS\Lambda_{e_c}(\pi)\right)\geq e_c$.
On the other hand, all the nonzero entries of $y$ are units of $\Z_p$. Hence, we get the contradiction
\[ e_c \leq \nu_\pi(y^T S \Lambda_{e_c}(\pi) ) = \min_{j:y_j\neq 0} \nu_\pi((S \Lambda_{e_c}(\pi))_j) < e_c.\]
\end{proof}

\begin{lemma}\label{lem:elimina}
Let $0<d\leq e$, and let $A\in\Z_p^{d\times d}$, $B,T\in\Z_p^{e\times e}$, and $W\in\Z_p^{d\times e}$, with $T$ nonsingular. Suppose that (i) $\SF(B)=I_{e-d}\oplus pI_d$;
(ii) $\left[\begin{smallmatrix}W\\B\end{smallmatrix}\right]\in\Z_p^{(d+e)\times e}$ has full column rank modulo $p$; and (iii) there exists $k\in \mathbb{N}$ such that both $p^{-k}A$ and $p^kT^{-1}$ have entries in $\Z_p$. Then
\[ \begin{bmatrix}A&WT\\0&BT\end{bmatrix}\sim pA\oplus T.\]
\end{lemma}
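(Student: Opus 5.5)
The plan is to normalize $B$ through its Smith form, use hypothesis (ii) to turn part of $W$ into a unimodular block, and then clear the off-diagonal blocks with row and column operations. Hypothesis (iii) guarantees that the column-clearing step stays inside $\Z_p$.

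First I normalize $B$. By (i) write $B=U(I_{e-d}\oplus pI_d)V$ with $U,V$ unimodular over $\Z_p$. I left-multiply the block matrix by the unimodular matrix $I_d\oplus U^{-1}$ and set $T':=VT$ (so $T'\sim T$, and $T'^{-1}=T^{-1}V^{-1}$ still satisfies $p^kT'^{-1}\in\Z_p^{e\times e}$) and $W':=WV^{-1}$. The matrix becomes
\[\begin{bmatrix}A&W'T'\\0&(I_{e-d}\oplus pI_d)T'\end{bmatrix}.\]
Hypothesis (ii) is preserved under these transformations: $\left[\begin{smallmatrix}W'\\ I_{e-d}\oplus pI_d\end{smallmatrix}\right]$ has full column rank modulo $p$. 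Modulo $p$ the lower block only has the identity in its first $e-d$ columns. Hence, writing $W'=[W_1\ W_2]$ with $W_2\in\Z_p^{d\times d}$, the condition says exactly that $W_2$ is invertible modulo $p$, i.e.\ unimodular over $\Z_p$. This translation of (ii) is the key step.

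Next I split $T'$ by rows as $T'=\left[\begin{smallmatrix}T_1\\T_2\end{smallmatrix}\right]$, with $T_1$ having $e-d$ rows and $T_2$ having $d$ rows. The three block rows are then $[A,\ W_1T_1+W_2T_2]$, $[0,\ T_1]$ and $[0,\ pT_2]$. I perform the following row operations, all unimodular over $\Z_p$.
\begin{itemize}
\item Subtract $W_1$ times the second block row from the first, which leaves $[A,\ W_2T_2]$.
\item Left-multiply the first block row by $W_2^{-1}$, giving $[A',\ T_2]$ with $A':=W_2^{-1}A\sim A$.
\item Subtract $p$ times the new first block row from the third, giving $[-pA',\ 0]$.
\end{itemize}
After a row permutation, the matrix is
\[\begin{bmatrix}A'&T_2\\0&T_1\\-pA'&0\end{bmatrix}\sim\begin{bmatrix}A'' & T'\\ -pA'&0\end{bmatrix},\qquad A'':=\begin{bmatrix}A'\\0\end{bmatrix}\in\Z_p^{e\times d}.\]

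The remaining task is to kill $A''$ with a column operation: right-multiply by $\left[\begin{smallmatrix}I&0\\X&I\end{smallmatrix}\right]$ with $X:=-T'^{-1}A''$. This is where (iii) is needed, and it is the only delicate point, because $T'^{-1}$ is not integral in general. Since $p^{-k}A$ is integral, so is $p^{-k}A''$. Then $X=-(p^kT'^{-1})(p^{-k}A'')$ has entries in $\Z_p$, so the operation is unimodular. It produces $\left[\begin{smallmatrix}0&T'\\-pA'&0\end{smallmatrix}\right]$. A block column swap together with $T'\sim T$ and $A'\sim A$ then gives the claimed equivalence with $pA\oplus T$.
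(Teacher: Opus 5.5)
Your proof is correct. Its final step is the same as the paper's: you clear the off-diagonal block by the column operation with $X=-(p^kT'^{-1})(p^{-k}A'')$, and this is exactly where (iii) enters.

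The difference lies in how the left unimodular transformation is built. The paper takes a one-shot approach:
\begin{itemize}
\item From (ii), via an external result on completing to invertible bases, it obtains a B\'ezout-type identity $S_1W+S_2B=I_e$.
\item It then writes down $U=\left[\begin{smallmatrix}pI_d&-pWB^{-1}\\S_1&S_2\end{smallmatrix}\right]$. This matrix is integral because $pB^{-1}$ is integral by (i), and it is unimodular because $\det U\det B=p^d$.
\item Multiplying gives $U\left[\begin{smallmatrix}A&WT\\0&BT\end{smallmatrix}\right]=\left[\begin{smallmatrix}pA&0\\S_1A&T\end{smallmatrix}\right]$ directly.
\end{itemize}
You instead put $B$ in Smith form first. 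This turns (ii) into the concrete statement that the $d\times d$ block $W_2$ of $WV^{-1}$ is unimodular, and after that you need only elementary block row operations.

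What each route buys: yours is self-contained and makes the role of (ii) transparent, at the price of the bookkeeping $T\mapsto VT$, $A\mapsto W_2^{-1}A$. The paper's is shorter and needs no normalization of $B$, but it relies on the cited result to produce $S_1,S_2$.

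One cosmetic slip: after your row permutation, the top-right block is $\left[\begin{smallmatrix}T_2\\T_1\end{smallmatrix}\right]$, not $T'$. You can fix this in either of two ways, and neither affects the argument:
\begin{itemize}
\item order the rows as $[0,\ T_1]$ then $[A',\ T_2]$, and take $A''=\left[\begin{smallmatrix}0\\A'\end{smallmatrix}\right]$; or
\item keep your $A''$ and replace $T'$ by the corresponding row permutation of it, which is still equivalent to $T$ and still satisfies the integrality condition in (iii).
\end{itemize}
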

\begin{proof}
By (ii) and \cite[Theorem 15 and Remark 17]{NoferiniInvertibleBases}, there exist $S_1\in\Z_p^{e\times d}$ and $S_2\in\Z_p^{e\times e}$ such that $S_1W+S_2B=I_e$. We define $U:=\begin{bmatrix}pI_d&-pWB^{-1}\\S_1&S_2\end{bmatrix}.$
By (i), $pB^{-1}$ has entries in $\Z_p$, so the same holds for $U$. Moreover, $ U\begin{bmatrix}I_d&W\\0&B\end{bmatrix}
 =\begin{bmatrix}pI_d&0\\S_1&I_e\end{bmatrix}.$ Thus, $\det U\det B=p^d$. Hence, using (i), $U$ is unimodular.

Multiplying times $A\oplus T$ from the right, we get
\[ U\begin{bmatrix}A&WT\\0&BT\end{bmatrix} =\begin{bmatrix}pA&0\\S_1A&T\end{bmatrix}.\]
Observe now that, by (iii),  $ T^{-1}S_1A=(p^kT^{-1})S_1(p^{-k}A) \in\Z_p^{e\times d}$. Collecting all these pieces, we have the unimodular equivalence
\[ U\begin{bmatrix}A&WT\\0&BT\end{bmatrix} \begin{bmatrix}I_d&0\\-T^{-1}S_1A&I_e\end{bmatrix} =pA\oplus T.\]
\end{proof}

\begin{proof}[Proof of Theorem \ref{thm:collage}]
We set $D:=\deg F$, $d:=\deg H$, and $e_c:=\varphi(p^c)$ within the proof. By Corollary \ref{cor:composition}
and \cite[Lemma 3.14]{NoferiniWilliamsApplications}, we can equivalently replace $F,H,Q$ by $b,h,q$, respectively, where $b(x):=F(1+x)$, $h(x):=H(1+x)$, and $q(x):=Q(1+x)$. In view of this,  \eqref{eq:wedontprovethat} is equivalent to
\begin{equation}\label{eq:weprovethat}
 b(C_{hq})\sim pb(C_h)\oplus h(C_q)^{-1}b(C_q).
\end{equation}
Since $H$ is squarefree, $d\leq\sum_{j=0}^{c-1}e_j=p^{c-1}\leq e_c$. By Lemma \ref{lem:compablocchi}, $T:=h(C_q)^{-1}b(C_q)\in\Z_p[C_q]$ and \eqref{eq:blocktriang} holds with $W$ as in \eqref{eq:W}, and $\begin{bmatrix}W\\h(C_q)\end{bmatrix}$ having full column rank modulo $p$. Moreover, by applying again Corollary \ref{cor:composition} and \cite[Lemma 3.14]{NoferiniWilliamsApplications}, we conclude also that $H(C_Q)^{-1}F(C_Q)\in\Z_p^{e_c\times e_c}$.

Our next move is to apply Lemma \ref{lem:elimina} to \eqref{eq:blocktriang}, with $A=b(C_h)$ and $B=h(C_q)$ so that $BT=b(C_q)$; this proves \eqref{eq:weprovethat} and hence \eqref{eq:wedontprovethat}. It remains to verify that the Lemma's assumptions hold. We have just observed that (ii) holds; for (i), recalling that $h(\pi)=H(\zeta)$, using \eqref{eq:cyclident} we get $\nu_\pi(h(\pi))=\sum_{j \in B_-} e_j = d$. Hence, by Lemma \ref{lem:multiplication} and Corollary \ref{cor:composition}, $\SF(h(C_q))=I_{e_c-d}\oplus pI_d$. For (iii), let us define $\delta:=\lceil(D-d)/e_c\rceil$ and write $b(x)\bmod h(x)=\sum_{j=0}^{d-1}a_jx^j$. Applying Lemma \ref{lem:restocoeff} to the pair $b,h$, we obtain $e_c\nu_p(a_j)+j>D$ by \eqref{eq:restocoeff}. Since $j<d$, we have $\nu_p(a_j)>(D-d)/e_c$, and hence $\nu_p(a_j)\geq\delta$. Thus every coefficient $a_j$ is divisible by $p^{\delta}$. Since $h(C_h)=0$, we have $b(C_h)=\sum_{j=0}^{d-1}a_jC_h^j$, so $p^{-\delta}b(C_h)\in\Z_p^{d\times d}$. Moreover, an argument similar to the one above yields $\nu_\pi(b(\pi))=D$, and hence $\nu_\pi(p^{\delta}h(\pi)/b(\pi))=\delta e_c-(D-d)\geq0$, so $p^{\delta}h(\pi)/b(\pi)\in\O$. The companion ring isomorphism $\Z_p[C_q]\cong\O$ sends $C_q$ to $\pi$ \cite[Theorem 3.1]{NoferiniWilliamsBrieskorn}. Under the inverse of this isomorphism, $p^{\delta}h(\pi)/b(\pi)$ corresponds to $p^{\delta}T^{-1}\in\Z_p[C_q]$. This verifies (iii) with $k=\delta$.

Finally, the partial multiplicities of $pb(C_h)$ are $\eta_1+1,\ldots,\eta_d+1$. The companion ring isomorphism identifies $T$ with $b(\pi)/h(\pi)$, whose $\pi$-adic valuation is $D-d$. Therefore, applying Lemma \ref{lem:multiplication} and Corollary \ref{cor:composition}, the partial multiplicities of $T$ are given by the list $\mathcal U_{e_c}(D-d)$ as defined in \eqref{eq:uniformizer}.
\end{proof}

\begin{example}
Let us consider $p=2$, $F(t)=\Phi_2(t)^2=(t+1)^2$, $H(t)=\Phi_1(t)=t-1$, and $Q(t)=\Phi_4(t)=t^2+1$.

Then $F(C_H)=4$ and $H(C_Q)^{-1}F(C_Q)=I_2-C_Q = \begin{bmatrix}
1 &1\\
-1&1
\end{bmatrix} \sim \begin{bmatrix}
1 & 0\\
0 & 2
\end{bmatrix}$ (the last step follows immediately by computing the gcd of the entries and the determinant). Thus, using \eqref{eq:wedontprovethat} and Lemma \ref{lem:reordersmith}, $\SF(F(C_{HQ})) = 1 \oplus 2 \oplus 8$.
\end{example}

Theorem \ref{thm:collage} describes the change in the Smith form when $0<\deg H<\deg F$ and all cyclotomic indices in $F$ are smaller than $p^c$. This does not yet cover all the possible pairs $F,G$ satisfying the assumptions of Theorem \ref{thm:closed}; we study the remaining cases in Lemma \ref{thm:local}.

\begin{lemma}\label{thm:local}
Let $F,G,A,B$ be defined as in Theorem \ref{thm:closed}. Let $c:=\max B$ and
\[ P(t):=\prod_{\substack{h\in A\\h<c}} \Phi_{p^h}(t)^{k_h},\qquad H:=G/\Phi_{p^c}.\]
Write $N:=\sum_{\substack{h\in A\\h>c}}k_h$, and the Euclidean division $\max(\deg P-\deg H,0)=:qe_c+r$, $0\leq r<e_c$. Let $p^{\eta_1},\ldots,p^{\eta_{\deg H}}$ be the invariant factors of $P(C_H)$. Then,
\begin{equation}\label{eq:recursion}
 F(C_G)\sim p^N\left(\left(\bigoplus_{i=1}^{\deg H} p^{\mu_i}\right)\oplus p^q I_{e_c-r}\oplus p^{q+1}I_r\right),
\end{equation}
where $\mu_i=0$ if $\eta_i=0$, and $\mu_i=\eta_i+1$ if $\eta_i>0$.
\end{lemma}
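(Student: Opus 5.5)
The plan is to peel off the factors of $F$ of index larger than $p^c$ first, and then to split into cases according to how $\deg P$ compares with $\deg H$, using Theorem \ref{thm:collage} in one regime and the Swap Theorem in the other.

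\emph{Step 1: the high-index factors.} Since $G$ is squarefree and all its cyclotomic indices divide $p^c$, we have $G\mid t^{p^c}-1$, so $C_G^{p^c}=I$. For $h>c$ we have $\Phi_{p^h}(t)=\Phi_p(t^{p^{h-1}})$ and $p^{h-1}\geq p^c$, hence
\[\Phi_{p^h}(C_G)=\Phi_p(I)=pI.\]
Therefore $F(C_G)=p^N P(C_G)$, and it suffices to prove \eqref{eq:recursion} with $N=0$ and $F$ replaced by $P$. Every index occurring in $P$ or $H$ is then smaller than $p^c$.

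\emph{Step 2: degenerate cases and the regime $0<\deg H<\deg P$.} If $H=1$, the claim is Lemma \ref{lem:multiplication}, using $\nu_\pi(P(\zeta))=\deg P$ from \eqref{eq:cyclident}. If $P=1$, both sides are identity matrices. If $0<\deg H<\deg P$, Theorem \ref{thm:collage} gives the lists $(\eta_i+1)_i$ and $\mathcal U_{e_c}(\deg P-\deg H)$. It then remains to check that every $\eta_i>0$, so that $\mu_i=\eta_i+1$. For this, pass to $x=t-1$ via Corollary \ref{cor:composition} and apply Lemma \ref{lem:restocoeff} to $P(1+x)\bmod H(1+x)=\sum_j a_jx^j$. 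It gives $e_c\nu_p(a_j)>\deg P-j>0$ for $j<\deg H$, so $P(C_H)\equiv 0\bmod p$ and all $\eta_i\geq1$.

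\emph{Step 3: the regime $0<\deg P\leq\deg H$, so that $q=r=0$.} Here Theorem \ref{thm:collage} does not apply, so I would use the Swap Theorem instead. It gives
\[P(C_G)\sim I\oplus G(C_P),\qquad P(C_H)\sim I_{\deg H-\deg P}\oplus H(C_P).\]
Now $G(C_P)=H(C_P)\,\Phi_{p^c}(C_P)$. By the Swap Theorem again, $\Phi_{p^c}(C_P)\sim I\oplus P(C_{\Phi_{p^c}})$. By Lemma \ref{lem:multiplication}, $P(C_{\Phi_{p^c}})$ has Smith form $I_{e_c-\deg P}\oplus pI_{\deg P}$, since $\deg P\leq\deg H\leq p^{c-1}\leq e_c$. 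Comparing sizes and determinants, $\Phi_{p^c}(C_P)\sim pI_{\deg P}$. Hence $p^{-1}\Phi_{p^c}(C_P)$ is integral with unit determinant, i.e.\ unimodular. Thus $G(C_P)\sim pH(C_P)$, and
\[P(C_G)\sim I_{e_c+\deg H-\deg P}\oplus pH(C_P).\]
To match the rule for $\mu_i$, the unit invariant factors of $P(C_H)$ must be exactly the $\deg H-\deg P$ identity entries, i.e.\ $H(C_P)\equiv0\bmod p$. This follows from Lemma \ref{lem:restocoeff} with the roles of the two polynomials exchanged, applied after the shift $t=1+x$ to $H\bmod P$. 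Since $\deg H\geq\deg P>0$, it gives $e_c\nu_p(a_j)>\deg H-j>0$ for $j<\deg P$.

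I expect this last regime to be the main obstacle. One must realize that Theorem \ref{thm:collage} is unavailable there, and one must justify that $\Phi_{p^c}(C_P)$ is exactly $p$ times a unimodular matrix even when $P$ is not squarefree. The resulting bookkeeping then has to distinguish unit from nonunit entries, which is where the rule $\mu_i=0$ if and only if $\eta_i=0$ comes from.
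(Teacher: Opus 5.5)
Your proof is correct and takes essentially the same route as the paper. Like the paper, you strip off $p^N$, use Theorem \ref{thm:collage} when $\deg P>\deg H$, and when $\deg P\le\deg H$ use the Swap Theorem to get $\Phi_{p^c}(C_P)\sim pI_{\deg P}$ and $P(C_G)\sim I\oplus pH(C_P)$; the only real difference is that you check the needed divisibility by $p$ with Lemma \ref{lem:restocoeff}, whereas the paper reduces modulo $p$ directly, using $\overline{P}=(t-1)^{\deg P}$ and $\overline{H}=(t-1)^{\deg H}$. One small slip: since $\deg P\le e_c$, the Swap Theorem gives $P(C_{\Phi_{p^c}})\sim I_{e_c-\deg P}\oplus\Phi_{p^c}(C_P)$, not the reversed relation you wrote, although your subsequent size comparison shows you used it the right way round.
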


\begin{proof}
We set $D:=\deg P$ and $d:=\deg H$ within the proof. If $h>c$, then $G\mid t^{p^{h-1}}-1$, and hence $\Phi_{p^h}(t) = \Phi_p(t^{p^{h-1}}) \equiv p\bmod G(t)$. Thus, $\Phi_{p^h}(C_G)=pI_{\deg G}$ for $h>c$, and $F(C_G)=p^NP(C_G)$. In view of this, we may without loss of generality assume $N=0$ and $F=P$.

If $B=\{c\}$, then \eqref{eq:cyclident} yields $\nu_\pi(F(\zeta))=D$, where $\zeta$ is a primitive $p^c$-th root of unity. By Lemma \ref{lem:multiplication} and \eqref{eq:uniformizer}, the partial multiplicities of $F(C_G)$ are the elements of $\mathcal U_{e_c}(D)$.

We now assume $d>0$ and define $Q(t):=\Phi_{p^c}(t)$. We write $\overline F,\overline H\in\mathbb F_p[t]$ for the reductions of $F,H$ modulo $p$. There are two cases:
\begin{itemize}
\item If $D\leq d$, then $D\leq d\leq p^{c-1}\leq e_c$. By \eqref{eq:cyclident}, Lemma \ref{lem:multiplication}, and the Swap Theorem \cite[Theorem 3.11]{NoferiniWilliamsApplications}, we have
\[ I_{e_c-D}\oplus Q(C_F)\sim F(C_Q)\sim I_{e_c-D}\oplus pI_D.\]
By the uniqueness of the Smith form, $Q(C_F)\sim pI_D$, so $U:=p^{-1}Q(C_F)$ is unimodular over $\Z_p$. Applying the Swap Theorem twice, we have
\begin{align}
 F(C_{HQ})&\sim I_{d+e_c-D}\oplus H(C_F)Q(C_F)
             \sim I_{d+e_c-D}\oplus pH(C_F),\label{eq:small-swap1}\\
 F(C_H)&\sim I_{d-D}\oplus H(C_F).\label{eq:small-swap2}
\end{align}
Since $\overline F(t)=(t-1)^D$, $\overline H(t)=(t-1)^d$, and $D\leq d$, every partial multiplicity of $H(C_F)$ is positive by \cite[Theorem 3.15]{NoferiniWilliamsApplications}. Hence, \eqref{eq:recursion} follows from \eqref{eq:small-swap1} and \eqref{eq:small-swap2}.
\item If $D>d$, then $\overline H(t)=(t-1)^d$ divides $\overline F(t)=(t-1)^D$. Thus $F(C_H)\equiv0\bmod p$, so $\eta_i>0$ and $\mu_i=\eta_i+1$ for every $i$. The assumptions of Theorem \ref{thm:collage} hold, because $0<d<D$ and all the indices of the cyclotomic factors of $F=P$ are smaller than $p^c$. Hence, by Theorem \ref{thm:collage}, the partial multiplicities of $F(C_G)$ are obtained by joining the list $(\mu_1,\ldots,\mu_d)$ and $\mathcal U_{e_c}(D-d)$. Comparing with \eqref{eq:uniformizer} and noting that $D-d=qe_c+r$, this proves \eqref{eq:recursion}.
\end{itemize}
\end{proof}

\subsubsection{Proof of \texorpdfstring{\eqref{eq:closed}}{(\ref{eq:closed})}}

We can finally cash in on the previous technical accomplishments and prove Theorem \ref{thm:closed}.

\begin{proof}[Proof of Theorem \ref{thm:closed}] 
We construct $F$ and $G$ by adjoining their cyclotomic factors in increasing order of the indices in $A\cup B$. We denote the products already constructed by $P$ and $H$, respectively, starting with $P(t)=H(t)=1$. When $H=1$, the list of existing partial multiplicities is empty.

The recursion has two subcases:
\begin{itemize}
\item If the next index is $h\in A$, we replace $P \leftarrow P\cdot \Phi_{p^h}^{k_h}$. By construction, every index of the cyclotomic factors of $H$ is smaller than $p^h$, and hence $\Phi_{p^h}(C_H)=pI_{\deg H}$ by the same argument as in the proof of Lemma \ref{thm:local}. Thus every partial multiplicity of $P(C_H)$ increases by $k_h$.
\item If the next index is $b\in B$, we replace $H\leftarrow H\cdot \Phi_{p^b}$. We apply Lemma \ref{thm:local} with $F=P$, $G=H\Phi_{p^b}$, and $c=b$, where $H$ here denotes the product before the current step. These polynomials are coprime, and $H\Phi_{p^b}$ is squarefree. Every index of the cyclotomic factors of $P$ is, by construction, smaller than $p^b$, so the sum defining $N$ in the Lemma is empty, and $N=0$. Hence, positive partial multiplicities increase by $1$, and zero partial multiplicities remain zero. In addition, $e_b$ new partial multiplicities arise. To determine them, observe that the degrees of $P$ and $H$ before the current step are precisely the two sums in the left hand side of \eqref{eq:division}. Defining $q_b$ and $r_b$ accordingly, the new partial multiplicities are therefore  $q_b$ ( $e_b-r_b$ times) and $q_b+1$ ( $r_b$ times).
\end{itemize}
We now fix $b\in B$ and follow, as we add the next factors, the $e_b$ partial multiplicities that first arose when $b$ was added. Any partial multiplicity that is initially positive stays positive. Each later factor $\Phi_{p^h}^{k_h}$ of $F$ increases it by $k_h$, and each later factor $\Phi_{p^c}$ of $G$ increases it by $1$. Its total increase is therefore $\sum_{\substack{h\in A\\h>b}}k_h+\#\{c\in B:c>b\}=\alpha_b$. There are now again two cases:
\begin{itemize}
\item[$q_b \geq 1$] The $e_b-r_b$ partial multiplicities initially equal to $q_b$ thus become $\alpha_b+q_b$, while the $r_b$ initially equal to $q_b+1$ become $\alpha_b+q_b+1$. 
\item[$q_b=0$] The same argument above applies to the $r_b$ partial multiplicities initially equal to $1$; their final value is thus $\alpha_b+1$. Instead, a partial multiplicity initially equal to zero remains zero until we add the first cyclotomic factor of $F$ with index greater than $p^b$, say, $\ell > p^b$. There are $\delta_b$ cyclotomic factors of $G$ with indices between $p^b$ and $\ell$; they are added while the partial multiplicity is still zero and do not increase it. Every later factor of $G$ increases it by $1$, and its final value is therefore $\alpha_b-\delta_b$. (Note that, if there is no factor of $F$ with index greater than $p^b$, then $\alpha_b=\delta_b$, and the partial multiplicity remains zero.)
\end{itemize}
Finally, taking direct sums over $b\in B$ yields \eqref{eq:closed}.
\end{proof}

\subsection{Proof of Theorem \ref{thm:global}}

We now turn to the global Smith form over $\Z$. Our first ingredient is the following version of Hensel's Lemma \ref{lem:hensel}. To state it, recall our ``bar'' notation: Given the ring of integers $\O$ with residue field $k = \O/\mathfrak{m}$ and a polynomial $P(t) = \sum_i a_i t^i \in \O[t]$, we denote its reduction by $\overline{P}(t)=\sum_i [a_i] t^i \in k[t]$, where $ [a_i] = a_i \bmod \mathfrak{m}$.

\begin{lemma}[Hensel's Lemma, {\cite[Chapter II, (4.6)]{Neukirch99}}]
\label{lem:hensel}
Let $K$ be a finite extension of $\QQ_p$, with ring of integers $\O$, maximal ideal $\mathfrak m$, and residue field $k:=\O/\mathfrak m$. Let the monic polynomial $P\in\O[t]$ satisfy $\overline P= \prod_{i=1}^s\overline P_i$, where the $\overline P_i\in k[t]$ are monic and pairwise coprime polynomials. Then, there are unique monic polynomials $P_i\in\O[t], i=1,\dots,s,$ such that
\begin{enumerate}
\renewcommand{\labelenumi}{(\roman{enumi})}
\item $P_i\equiv\overline P_i\bmod\mathfrak m$ for $i=1,\ldots,s$;
\item $\deg P_i=\deg\overline P_i$ for $i=1,\ldots,s$; and
\item $P=P_1\cdots P_s$.
\end{enumerate}
In particular, every simple root of $\overline P$ in $k$ is congruent modulo $\mathfrak m$ to a unique root of $P$ in $\O$.
\end{lemma}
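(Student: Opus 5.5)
We will deduce the statement from the two-factor case by induction on $s$, and prove the two-factor case by successive approximation modulo powers of a uniformizer $\varpi$ of $\O$. Since $\O$ is a complete discrete valuation ring (the integral closure of $\Z_p$ in the finite extension $K$), it is $\mathfrak m$-adically complete, and $\mathfrak m=\varpi\O$. For the induction we take $s=2$ first. For $s>2$ we write $\overline P=\overline P_1\cdot(\overline P_2\cdots\overline P_s)$; the two factors are coprime because the $\overline P_i$ are pairwise coprime. We lift this factorization and then apply the induction hypothesis to the monic lift of $\overline P_2\cdots\overline P_s$. Uniqueness also propagates through this induction.

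For the two-factor case, let $\overline P=\overline G\,\overline H$ with $\overline G,\overline H$ monic and coprime. We pick monic lifts $G_0,H_0\in\O[t]$ of the same degrees, so that $P-G_0H_0\in\varpi\O[t]$ and $\deg(P-G_0H_0)<\deg P$. Since $k[t]$ is a PID, there are $\overline a,\overline b$ with $\overline a\,\overline G+\overline b\,\overline H=1$; we lift them to $a,b\in\O[t]$, so that $aG_0+bH_0\equiv1\bmod\varpi$. Inductively, suppose that $G_n,H_n$ are monic of the right degrees and satisfy $P\equiv G_nH_n\bmod\varpi^{n+1}$. We write $P-G_nH_n=\varpi^{n+1}E$ with $\deg E<\deg P$ and look for corrections $G_{n+1}=G_n+\varpi^{n+1}u$ and $H_{n+1}=H_n+\varpi^{n+1}v$ with $\deg u<\deg \overline G$ and $\deg v<\deg\overline H$, which keeps both polynomials monic. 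The required congruence becomes $uH_n+vG_n\equiv E\bmod\varpi$. We solve it by dividing $bE$ by $\overline G$ with remainder in $k[t]$: we take $\overline u$ to be this remainder, and set $\overline v:=\overline a E+\overline H\cdot(\text{quotient})$. A degree count, using $\deg E<\deg P$, then forces $\deg\overline v<\deg\overline H$. Lifting $\overline u,\overline v$ arbitrarily with the same degree bounds completes the step. The sequences $(G_n)$ and $(H_n)$ are coefficientwise Cauchy, so they converge by completeness to monic $G,H$ with $P=GH$, $\deg G=\deg\overline G$, and $G\equiv\overline G\bmod\mathfrak m$; this gives (i)--(iii).

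For uniqueness, suppose $GH=G'H'$ are two such factorizations. We will show by induction on $n$ that $G\equiv G'$ and $H\equiv H'\bmod\varpi^{n}$. The key step is that if $G'=G+\varpi^n u$ and $H'=H+\varpi^n v$ with $\deg u<\deg G$ and $\deg v<\deg H$, then $uH+vG\equiv0\bmod\varpi$. Since $\overline G$ and $\overline H$ are coprime, $\overline G\mid\overline u$, which forces $\overline u=0$ by degree, and then $\overline v=0$. This uniqueness argument is the one place where some care is needed: the degree bookkeeping on the correction terms must be kept consistent throughout. The existence step is otherwise routine, and I expect the degree control in the solution of $uH_n+vG_n\equiv E$ to be the main technical point.

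Finally, if $\overline\lambda\in k$ is a simple root of $\overline P$, then $\overline P=(t-\overline\lambda)\overline Q$ with $\overline Q(\overline\lambda)\ne0$, so the two factors are coprime. The lifted monic linear factor $t-\lambda$ gives a root $\lambda\in\O$ of $P$ with $\lambda\equiv\overline\lambda$. Any other root $\lambda'\equiv\overline\lambda$ yields a factorization $P=(t-\lambda')Q'$ lifting the same pair of coprime factors, so $\lambda'=\lambda$ by uniqueness.
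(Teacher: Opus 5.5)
Your proof is correct. The paper gives no proof of this lemma and simply cites Neukirch, Chapter II, (4.6). Your two-factor existence argument is essentially the standard successive-approximation proof found there. The differences are in generality: Neukirch works with a primitive, not necessarily monic, polynomial over an arbitrary complete nonarchimedean field, uses a suitable element of $\mathfrak m$ in place of a uniformizer, and only requires $\deg h\le\deg P-\deg\overline g$.

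The cited two-factor statement does not literally contain three things the paper's formulation asserts:
\begin{enumerate}
\item \emph{Uniqueness of the monic lifts.} Your coprimality-plus-degree argument handles this: from $uH+vG\equiv0\bmod\varpi$ you get $\overline G\mid\overline u$, and $\deg\overline u<\deg\overline G$ forces $\overline u=0$.
\item \emph{The case of $s>2$ factors.} Your induction on $s$ covers it. Uniqueness propagates by applying two-factor uniqueness to $P_1\cdot(P_2\cdots P_s)$ and then the induction hypothesis to $P_2\cdots P_s$.
\item \emph{The simple-root consequence.} You deduce it from uniqueness. The one small step you leave implicit is that $\overline{Q'}=\overline Q$, which follows by cancelling $t-\overline\lambda$ in the domain $k[t]$.
\end{enumerate}

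So your write-up is a self-contained proof of the lemma exactly as the paper states it, rather than a reduction to the reference.
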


We next need a matrix theoretical consequence of B\'{e}zout's identity. Note in particular that Lemma \ref{lem:bezout} applies to the case $\O=\Z_p, \mathfrak{m} = \langle p \rangle$.

\begin{lemma}\label{lem:bezout}
In the notation of Lemma \ref{lem:hensel}, suppose that $P,H \in \O[t]$, with $H$ monic, are such that $ \overline{P}, \overline{H} $ are coprime. Then, $\operatorname{Res}(P,H)$ is a unit of $\O$, or equivalently, the matrix $P(C_H)$ is unimodular over $\O$. 
\end{lemma}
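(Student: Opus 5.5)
The plan is to use B\'ezout's identity in $k[t]$ and lift it to $\O[t]$, then read off invertibility of $P(C_H)$ inside the companion ring $\O[C_H]\cong\O[t]/\langle H\rangle$.

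\emph{Step 1 (lifting B\'ezout).} Since $k$ is a field and $\overline P,\overline H$ are coprime, there are $\bar a,\bar b\in k[t]$ with $\bar a\overline P+\bar b\overline H=1$. Lift $\bar a,\bar b$ coefficientwise to $a,b\in\O[t]$. Then
\[ aP+bH=1+E,\qquad E\in\mathfrak m[t]. \]

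\emph{Step 2 (evaluating at $C_H$).} Substituting $C_H$ and using $H(C_H)=0$ gives $a(C_H)P(C_H)=I+E(C_H)$. The matrix $E(C_H)$ has all entries in $\mathfrak m$, because $C_H$ has entries in $\O$ and the coefficients of $E$ lie in $\mathfrak m$. Hence $\det(I+E(C_H))\equiv1\bmod\mathfrak m$, so this determinant is a unit of $\O$. It follows that $\det P(C_H)$ divides a unit, and is therefore itself a unit. This is exactly the statement that $P(C_H)$ is unimodular over $\O$. If $\deg H=0$, then $P(C_H)$ is the empty matrix with determinant $1$, and the claim is trivial.

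\emph{Step 3 (the resultant reformulation).} For monic $H$, the identity $\det P(C_H)=\pm\operatorname{Res}(P,H)$ holds over any commutative ring. One way to see it is over the algebraic closure of the fraction field: $\det P(C_H)=\prod_{H(\lambda)=0}P(\lambda)$. So $\operatorname{Res}(P,H)$ is a unit if and only if $\det P(C_H)$ is a unit, which gives the stated equivalence.

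I expect no serious obstacle. The only point needing care is that $\overline{P(C_H)}=\overline P(C_{\overline H})$, i.e.\ that reduction modulo $\mathfrak m$ commutes with forming $C_H$ and evaluating. With that in hand, one can alternatively argue directly over $k$: since $\overline P$ is invertible modulo $\overline H$, $\det\overline P(C_{\overline H})\neq0$ in $k$, so $\det P(C_H)\notin\mathfrak m$.
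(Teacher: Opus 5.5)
Your argument is correct, but its core step differs from the paper's. The paper works entirely with the resultant. It quotes $\det P(C_H)=\operatorname{Res}(P,H)$ and notes that $\operatorname{Res}(P,H)\equiv\operatorname{Res}(\overline P,\overline H)\bmod\mathfrak m$. It then uses that, over the field $k$, coprimality of $\overline P,\overline H$ makes $\operatorname{Res}(\overline P,\overline H)$ nonzero, so locality of $\O$ turns $\operatorname{Res}(P,H)$ into a unit. You instead lift a B\'ezout identity to $\O[t]$ and evaluate it at $C_H$, obtaining $a(C_H)P(C_H)=I+E(C_H)$ with $E(C_H)\equiv 0\bmod\mathfrak m$. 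Unimodularity then follows from multiplicativity of the determinant and locality alone. The resultant appears only afterwards, through $\det P(C_H)=\pm\operatorname{Res}(P,H)$, to give the equivalent formulation. Your route therefore does not need the field-level fact that coprimality is detected by a nonzero resultant, nor the compatibility of the resultant with reduction modulo $\mathfrak m$. It also exhibits the inverse $(I+E(C_H))^{-1}a(C_H)$ explicitly. The paper's route is a one-line consequence once the determinant--resultant identity is available. The alternative you sketch in your final paragraph, showing $\det\overline P(C_{\overline H})\neq0$ in $k$, is essentially the paper's proof.
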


\begin{proof}
By \cite[Equation (1.1)]{NoferiniWilliamsBrieskorn}, we have
\[ \det P(C_H)=\operatorname{Res}(P,H) \equiv\operatorname{Res}(\overline P,\overline H)  \bmod\mathfrak m.\]
Since $\overline{P}$ and $\overline{H}$ are coprime by assumption, $\operatorname{Res}(P,H)=\det P(C_H)$ is a unit modulo $\mathfrak{m}$, and a fortiori a unit of the local ring $\O$. 
\end{proof}

As a next step, Theorem \ref{lem:bello} is similar in spirit to \cite[Theorem 6.3]{NoferiniWilliamsBrieskorn}, but it makes different assumptions and its statement yields a similarity over the base ring (as opposed to a weaker unimodular equivalence).

\begin{theorem}\label{lem:bello}
Let $R$ be a commutative ring (with unity), and let $H_1,H_2\in R[t]$ be monic polynomials. If $\operatorname{Res}(H_1,H_2)$ is a unit of $R$, then, for every $P\in R[t]$, there exists a unimodular $U$, independent of $P$, such that $ UP(C_{H_1H_2})=  (P(C_{H_1}) \oplus P(C_{H_2}))U$. In particular, $P(C_{H_1H_2}) \sim P(C_{H_1}) \oplus P(C_{H_2})$.
\end{theorem}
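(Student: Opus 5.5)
The plan is to realize the claimed similarity as a change of basis in the quotient module $R[t]/\langle H_1H_2\rangle$, using the Chinese Remainder Theorem made explicit by a B\'ezout identity. Write $n_1:=\deg H_1$, $n_2:=\deg H_2$, $n:=n_1+n_2$. Since $\operatorname{Res}(H_1,H_2)$ is a unit of $R$, the Sylvester matrix of $(H_1,H_2)$ is invertible over $R$. Hence there exist $a,b\in R[t]$ with $\deg a<n_2$, $\deg b<n_1$ and $aH_1+bH_2=1$; this is the standard fact that the resultant lies in the ideal $\langle H_1,H_2\rangle$ with cofactors of bounded degree, and here we may divide by the resultant. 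Consequently the natural map
\[ \psi: R[t]/\langle H_1H_2\rangle \longrightarrow R[t]/\langle H_1\rangle \times R[t]/\langle H_2\rangle, \qquad [x]\mapsto([x],[x]), \]
is a ring isomorphism. Its inverse is $([y_1],[y_2])\mapsto[y_1bH_2+y_2aH_1]$, which is well defined because $bH_2\equiv1 \bmod H_1$ and $aH_1\equiv1\bmod H_2$.

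Next, express $\psi$ in coordinates using the paper's conventions. Polynomials are coefficient rows with respect to the monomial column $\Lambda_n(t)=[t^{n-1},\dots,1]^T$, and multiplication by $P$ modulo $g$ sends $a^T$ to $a^TP(C_g)$. Let $U\in R^{n\times n}$ be the matrix of $\psi$ with respect to the monomial bases $\Lambda_n$ on the source and $\Lambda_{n_1}\oplus\Lambda_{n_2}$ on the target. Concretely, row $i$ of $U$ lists the coefficients of the reductions of $t^{n-i}$ modulo $H_1$ and modulo $H_2$. This matrix is built only from $H_1,H_2$, so it is independent of $P$. Since $\psi$ is an $R$-module isomorphism between free modules of rank $n$, $U$ is invertible over $R$, i.e.\ unimodular; its inverse is the matrix of $\psi^{-1}$ and also has entries in $R$.

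The key point is that $\psi$ is $R[t]$-linear, so it intertwines multiplication by $P$ on the source with multiplication by $P$ on each factor of the target. In row-vector language, for every coefficient row $x^T$ we have
\[ (x^TP(C_{H_1H_2}))U = (x^TU)\,(P(C_{H_1})\oplus P(C_{H_2})), \]
hence $P(C_{H_1H_2})U=U(P(C_{H_1})\oplus P(C_{H_2}))$. Replacing $U$ by $U^{-1}$, which is also unimodular and independent of $P$, gives exactly the form $UP(C_{H_1H_2})=(P(C_{H_1})\oplus P(C_{H_2}))U$ required in the statement. The final claim follows because similarity via a unimodular matrix is a special case of unimodular equivalence.

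I expect the main obstacle to be the first step: passing from ``the resultant is a unit'' to the ideal equality $\langle H_1,H_2\rangle=R[t]$ over an arbitrary commutative ring, with no field or domain hypotheses. I would handle it through the adjugate of the Sylvester matrix. The Sylvester matrix $S$ represents the map $(a,b)\mapsto aH_1+bH_2$ from pairs with $\deg a<n_2$, $\deg b<n_1$ to polynomials of degree $<n$. Since $\det S=\pm\operatorname{Res}(H_1,H_2)$ is a unit, $S$ is bijective, so the constant polynomial $1$ has a preimage $(a,b)$, which gives the required B\'ezout identity. Everything else is bookkeeping about the row/column conventions, and the only thing to check there is that the transpose convention matches the paper's orientation of $U$.
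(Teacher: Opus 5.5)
Your proof is correct. It rests on the same fact as the paper's proof: invertibility of the Sylvester matrix gives an $R[t]$-linear isomorphism $R[t]/\langle H_1H_2\rangle\cong R[t]/\langle H_1\rangle\times R[t]/\langle H_2\rangle$, and $R[t]$-linearity, written in monomial coordinates, is exactly the intertwining relation. What differs is which isomorphism you write down.

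The paper never extracts a B\'ezout identity. It takes the intertwiner to be the Sylvester matrix itself, whose rows are the coefficient rows of $t^{n_1-1}H_2,\dots,tH_2,H_2,t^{n_2-1}H_1,\dots,tH_1,H_1$. In the row convention this is the matrix of $(y_1,y_2)\mapsto y_1H_2+y_2H_1$. Unimodularity is then immediate from $\det=\operatorname{Res}(H_1,H_2)$. The relation with $C_{H_1H_2}$ and $C_{H_1}\oplus C_{H_2}$ follows from a one-line congruence modulo $H_1H_2$, plus the remark that polynomials of degree less than $n$ divisible by $H_1H_2$ vanish.

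Your final intertwiner $U^{-1}$ is the matrix of the CRT inverse $(y_1,y_2)\mapsto y_1bH_2+y_2aH_1$. It therefore equals $\bigl(b(C_{H_1})\oplus a(C_{H_2})\bigr)$ times the paper's Sylvester matrix, so the two choices differ by a unit of the block-diagonal companion ring.

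Each route has its merit:
\begin{itemize}
\item Yours costs an extra step, the adjugate argument producing $aH_1+bH_2=1$. In return it makes the conceptual reason (CRT is a ring isomorphism) transparent, and gives the inverse intertwiner explicitly as the reduction matrix.
\item The paper's is shorter, because one matrix serves both as carrier of the resultant and as the change of basis.
\end{itemize}

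The only case your surjectivity step does not literally cover is $n_1=n_2=0$. There the target of the Sylvester map is zero, so ``$1$ has a preimage'' fails. But then $H_1=H_2=1$ and the claim is trivial, which the paper dispatches at the outset.
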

\begin{proof}
We may assume with no loss of generality that $H_1,H_2$ both have positive degree. Indeed, if $H_i=1$ then $C_{H_i}$ is the $0 \times 0$ empty matrix, and the statement holds trivially with $U=I$.

Denote for simplicity $X:=C_{H_1H_2}$ and  $Y:=C_{H_1} \oplus C_{H_2}$. It suffices to prove that $UX=YU$ for a unimodular $U$; indeed, the statement then follows by induction if $P$ is a monomial and by linearity for a more general polynomial. Let $n_i:=\deg H_i$, for $i=1,2$, and
$n:=n_1+n_2$, and denote \[\Lambda_k(t) := \begin{bmatrix}t^{k-1}\\
\vdots\\
t\\
1\end{bmatrix} \in R[t]^k.\] Let
$U\in R^{n\times n}$ be the Sylvester matrix of $H_1$ and $H_2$, which is defined by
$ U \Lambda_n(t) = \begin{bmatrix}
 H_2 \Lambda_{n_1}(t)\\
 H_1 \Lambda_{n_2}(t)
 \end{bmatrix}.$
We then have $\det U=\operatorname{Res}(H_1,H_2)$ \cite[Chapter 3]{IdealsVarietiesAlgorithms}, and therefore $U$ is unimodular. Moreover, by the properties of companion matrices recalled in the introduction, it holds
\[ UX\Lambda_n(t)\equiv tU\Lambda_n(t) \equiv\begin{bmatrix}
 tH_2\Lambda_{n_1}(t)\\
 tH_1\Lambda_{n_2}(t)
 \end{bmatrix} \equiv Y\begin{bmatrix}
 H_2\Lambda_{n_1}(t)\\
 H_1\Lambda_{n_2}(t)
 \end{bmatrix} \equiv YU\Lambda_n(t)\bmod\langle H_1H_2\rangle.\]
It follows that every entry of $(UX-YU)\Lambda_n(t)$, which has degree at most $n-1$, is divisible by the monic polynomial $H_1H_2$, which has degree $n$. Thus, $UX-YU=0$.
\end{proof}
Our last ingredient is the observation that the Smith form is invariant under a ring extension.
\begin{remark}\label{rem:invariance}
Let $\O$ be the ring of integers of a finite extension of $\QQ_p$. Then, every matrix $A\in\Z_p^{n\times n}$ has the same Smith form over $\Z_p$ and over $\O\supseteq\Z_p$. Indeed, let $S=UAV$ be the Smith form of $A$ over $\Z_p$; then, $U,V$ are also unimodular over $\O$ and thus $S$ is also the Smith form over $\O$, with the nonzero invariant factors normalized as powers of $p$ over both rings. (For the proof of Theorem \ref{thm:global}, we only need to consider unramified field extensions of $\QQ_p$.)\end{remark}

We are finally ready to cook the main dish.
\begin{proof}[Proof of Theorem \ref{thm:global}]
By \cite[Theorem A]{NoferiniWilliamsBrieskorn}, we have $f(C_g)\sim F(C_G)\oplus0_{\ell\times\ell}$. If $m=0$, $f(C_g)=0$ and there is nothing to prove. Otherwise, $F$ and $G$ are coprime, and $G$ is squarefree by hypothesis. Let us fix a prime $p$. The cyclotomic identities in \cite[p. 160]{Nagell01} give, for every $p\nmid a$ and $h\geq0$,
\begin{equation}\label{eq:cyclotomic-reduction}
 \overline{\Phi_{ap^h}}=\overline{\Phi_a}^{\,\varphi(p^h)}.
\end{equation}
The roots of $\overline{\Phi_a}$ have multiplicative order exactly $a$. Indeed, $t^a-1$ is squarefree over $\mathbb F_p$, because its derivative is $at^{a-1}$ and $p\nmid a$. In the
factorization $t^a-1=\prod_{d\mid a}\overline{\Phi_d}(t)$, a root of $\overline{\Phi_a}$ therefore cannot also be a root of $t^d-1$ unless $d=a$.

For every positive integer $a$ coprime to $p$, we now define $G_a(t):=\prod_{b\in B_{p,a}}\Phi_{ap^b}(t)\in\Z_p[t]$. By \eqref{eq:cyclotomic-reduction}, every root of $\overline{G_a}\in\mathbb F_p[t]$ has multiplicative order $a$. Hence, $\overline{G_a}$ and $\overline{G_{a'}}$ are coprime if $a\ne a'$, and thus $\operatorname{Res}(G_a,G_{a'})$
is a unit of $\Z_p$ by Lemma \ref{lem:bezout}. By repeated applications of Theorem \ref{lem:bello} for the ring $R=\Z_p$, we obtain
\[ F(C_G)\sim\bigoplus_{\substack{a\geq1\\p\nmid a,\ G_a\ne1}}F(C_{G_a}).\]
If a cyclotomic factor $P$ of $F$ has coprime-to-$p$ part $a'\ne a$, every root of $\overline P$ has multiplicative order $a'$, whereas every root of $\overline{G_a}$ has multiplicative order $a$. Thus $\overline P$ and $\overline{G_a}$ are coprime. By Lemma \ref{lem:bezout}, the matrix $P(C_{G_a})$ is therefore unimodular. Hence, factoring $F(C_{G_a})$,
\[ F(C_{G_a})\sim F_a(C_{G_a}),\qquad F_a(t):=\prod_{h\in A_{p,a}}\Phi_{ap^h}(t)^{k(ap^h)}.\]
It therefore suffices to determine the Smith form of $F_a(C_{G_a})$ for each $a$ with $G_a\ne1$.
For a fixed $a$, we choose a finite unramified extension of $\QQ_p$ whose residue field contains the $a$-th roots of unity \cite[Chapter II, Proposition 7.12]{Neukirch99}, and let $\O$ be its ring of integers. Since $p\nmid a$, the polynomial $t^a-1\in(\O/p\O)[t]$ has only simple roots. By Hensel's Lemma \ref{lem:hensel}, all $a$-th roots of unity lie in $\O$, and distinct roots have distinct reductions modulo $p$. By Remark \ref{rem:invariance}, we may equivalently study the Smith form of $F_a(C_{G_a})$ over $\O$, keeping the normalization in powers of $p$.

For each primitive $a$-th root of unity $\zeta$, let $P_{h,\zeta}(t):=\zeta^{e_h}\Phi_{p^h}(t/\zeta) \in \O[t]$, where $e_h:=\varphi(p^h)$. Denoting by $\overline{\zeta}$ the image of $\zeta$ in $\O/p\O$,
\[\Phi_{ap^h}(t)=\prod_{\zeta:\Phi_a(\zeta)=0}P_{h,\zeta}(t),\qquad \overline{P_{h,\zeta}}(t)=(t-\overline\zeta)^{e_h}.\]
We define $G_{a,\zeta}(t):=\prod_{b\in B_{p,a}}P_{b,\zeta}(t)$. For $\zeta\ne\zeta'$, the polynomials $\overline{G_{a,\zeta}}, \overline{G_{a,\zeta'}} \in (\O/p\O)[t]$ are powers of distinct linear factors, and hence coprime. Thus $\operatorname{Res}(G_{a,\zeta},G_{a,\zeta'})$
is a unit of $\O$ by Lemma \ref{lem:bezout}. Applying Theorem \ref{lem:bello} for the ring $R=\O$, we thus obtain
\[ F_a(C_{G_a})\sim \bigoplus_{\zeta:\Phi_a(\zeta)=0}F_a(C_{G_{a,\zeta}}).\]
When $\zeta'\ne\zeta$, we have by Lemma \ref{lem:bezout} that $P_{h,\zeta'}(C_{G_{a,\zeta}})$ is unimodular. Therefore, for the purpose of computing the Smith form it suffices to restrict to only the factors indexed by $\zeta$ in each product defining $F_a(C_{G_{a,\zeta}})$.

Let \[H(t):=\prod_{h\in A_{p,a}}\Phi_{p^h}(t)^{k(ap^h)},\qquad K(t):=\prod_{b\in B_{p,a}}\Phi_{p^b}(t).\]
By definition, $G_{a,\zeta}(t)=\zeta^{\deg K}K(t/\zeta)$. We therefore have
\begin{equation}\label{eq:laststep}
 D_\zeta^{-1}C_{G_{a,\zeta}}D_\zeta=\zeta C_K, \qquad D_\zeta:=\bigoplus_{j=1}^{\deg K}\zeta^{\deg K-j};
\end{equation}
observe in particular that $D_\zeta$ is unimodular, because $\zeta $ is a unit of $\O$, and therefore \eqref{eq:laststep} expresses a similarity of matrices over $\O$.  Moreover, $P_{h,\zeta}(\zeta C_K)=\zeta^{e_h}\Phi_{p^h}(C_K)$, and hence
\begin{equation}\label{eq:diagonalscaling}
 D_\zeta^{-1} \left(\prod_{h\in A_{p,a}}P_{h,\zeta}(C_{G_{a,\zeta}})^{k(ap^h)}\right)
 D_\zeta=\zeta^{\deg H}H(C_K),\end{equation}
implying that the product on the left hand side has the same Smith form as $H(C_K)$. Our last step is now to invoke Theorem \ref{thm:closed}, yielding $H(C_K)\sim D_{p,a}$ over $\Z_p$. Hence, by Remark \ref{rem:invariance}, the left hand side of \eqref{eq:diagonalscaling} is equivalent to $D_{p,a}$ over $\O$ (recall that $\zeta\in\O^\times$). There are precisely $\varphi(a)$ primitive $a$-th roots, and thus $F_a(C_{G_a})\sim\bigoplus_{j=1}^{\varphi(a)}D_{p,a}$ over $\O$. Both matrices have entries in
$\Z_p$; applying Remark \ref{rem:invariance} once more, they have the same Smith form over $\Z_p$. We combine these decompositions over all relevant $a$ and restore the $\ell$ zero entries to obtain \eqref{eq:globallocal}; finally, \eqref{eq:global} follows by Lemma \ref{lem:reordersmith}.
\end{proof}

\section{Applications}\label{sec:applications}
In this section, we discuss various applications.

\subsection{Groups, knots, branched covers, and graphs}

\subsubsection{Cyclically presented groups of type
\texorpdfstring{$\ZZ$}{Z}}

Let us start with a family of cyclically presented groups, called groups of Type $\ZZ$; these are of interest here because, under a particular hypothesis, their representer polynomial can be written as a product of cyclotomic polynomials. Moreover, the results obtained here will be useful later on, when we study knots in lens spaces and graphs. In \cite{McDermott22}, McDermott showed that, under suitable geometric assumptions, the $2$-dimensional complexes associated with the cyclic presentations of these groups can be realized as spines of closed $3$-dimensional manifolds, including the generalized Sieradski manifolds \cite{CHK} and certain Brieskorn manifolds \cite{Milnor}. Generally, the presentations of groups of type $ \ZZ$ are defined by a certain congruence modulo $n$ \cite[Lemma 4]{McDermott22}; below, we study a subfamily for which the congruence is replaced by an equality, and that can be parametrized as follows. Let $r>|s|>0$, let $q\geq 1$, and pick an integer $A$ satisfying 
\begin{equation}\label{eq:McDermott}
    2A+(r-s)q=0.
\end{equation}
Defining $a:=(r+s)/2$ and $b:=(r-s)/2$, and noting that \eqref{eq:McDermott} implies that $aq,bq \in \Z$, the presentations have representer polynomial $f(t)=(t^{aq}+1)(t^{bq}-1)/(t^q-1)$. When $r+s$ is even and $q=1$ Corollary \ref{cor:typeZeven} provides the abelianization of these groups, and hence (under the assumptions mentioned above) the first homology of the associated manifolds.
\begin{corollary}\label{cor:typeZeven}
Let $a,b,n\geq1$, $f(t):=(t^a+1)(t^b-1)/(t-1)$ and $g(t):=t^n-1$. Define $d:=\gcd(a,n)$, $e:=\gcd(b,n)$,
$h:=\gcd(a,b,n)$, and $L:=nh/(de)$. Then
\begin{equation}\label{eq:typeZ}
\operatorname{coker}f(C_g)\cong
\begin{cases}
\Z^{e-1}\oplus(\Z/2\Z)^{d-h}\oplus\Z/(2b/e)\Z,
 &\text{if }\nu_2(n)\leq\nu_2(a),\\
\Z^{d+e-1}\oplus\Z/(b/e)\Z,
 &\text{if }\nu_2(b)\leq\nu_2(a)<\nu_2(n),\\
\Z^{d+e-h-1}\oplus(\Z/L\Z)^{h-1}\oplus\Z/(2bL/e)\Z,
 &\text{if }\nu_2(a)<\min\{\nu_2(b),\nu_2(n)\}.
\end{cases}
\end{equation}
\end{corollary}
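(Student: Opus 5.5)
We will deduce the statement from Theorem \ref{thm:global}, which applies because $g=t^n-1$ is squarefree and hence so is $G=g/\gcd(f,g)$. Corollary \ref{thm:originale} will suffice whenever no shared factor is repeated in $f$. First factor $f$ into cyclotomics: $(t^b-1)/(t-1)=\prod_{k\mid b,\,k>1}\Phi_k$ and $t^a+1=(t^{2a}-1)/(t^a-1)=\prod_{k\mid 2a,\ \nu_2(k)=\nu_2(a)+1}\Phi_k$. Thus $f$ is squarefree except for the factors $\Phi_k$ with $k\mid\gcd(2a,b)$ and $\nu_2(k)=\nu_2(a)+1$, which occur with multiplicity $2$. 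Such $k$ exist only when $\nu_2(b)>\nu_2(a)$.

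Step 1 (free rank). Compute $\ell=\deg\gcd(f,g)$, which by \cite[Theorem A]{NoferiniWilliamsBrieskorn} is the free rank of $\operatorname{coker}f(C_g)$. The second factor contributes $\sum_{k\mid e,k>1}\varphi(k)=e-1$. The first factor contributes $\Phi_k$ with $k\mid\gcd(2a,n)$ and $k\nmid a$; this is nonempty iff $\nu_2(n)>\nu_2(a)$, and then it has total degree $\gcd(2a,n)-d=d$. The overlap has degree $\gcd(2a,b,n)-h=h$, and it is nonempty iff additionally $\nu_2(b)>\nu_2(a)$. Inclusion--exclusion gives $e-1$, $d+e-1$, $d+e-h-1$ in the three cases respectively, matching the free ranks in \eqref{eq:typeZ}.

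Step 2 (torsion, cases 1 and 2). Here every factor shared by $f$ and $g$ has multiplicity one in $f$. In case 1 no factor of $t^a+1$ divides $g$; in case 2 the overlap with $n$ is empty, since $\nu_2(b)\le\nu_2(a)$. Hence Corollary \ref{thm:originale} applies, and we compute, prime by prime, $n_{p,a'}$ (the number of factors $\Phi_{a'p^h}$ of $f$ with $h>\nu_p(n)$) and $\mu_{p,a'}$. For odd $p$, the factor $t^a+1$ has $n_{p,a'}$ depending only on whether $\nu_p(a)>\nu_p(n)$; one checks that its contributions cancel against units, except where it meets $(t^b-1)/(t-1)$. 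The odd part should then reproduce the torsion of $(t^b-1)/(t-1)$ evaluated at $C_{t^n-1}$, namely a single cyclic factor of order the odd part of $b/e$. This last claim can be confirmed from the Corollary directly: only $a'=1$ has $n_{p,1}>0$, with $\mu_{p,1}=1$. At $p=2$ in case 1, the divisors $a'\mid\alpha$ with $a'\mid a$ but $a'\nmid h$-type conditions give $n_{2,a'}=1$. The count of such blocks should be $d-h$, plus one extra $2$ attached to $a'=1$, which produces $\Z/(2b/e)$. Assembling with Lemma \ref{lem:reordersmith} gives the first two lines of \eqref{eq:typeZ}.

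Step 3 (case 3). The shared factors $\Phi_k$ with $k\mid 2h$, $\nu_2(k)=\nu_2(a)+1$ have $u(k)=2$ and $v(k)=1$, so $k(k)=1$ survives in $F$. We must therefore use Theorem \ref{thm:global} with the full data $A_{p,a'},B_{p,a'}$, including the $q_b,r_b,\delta_b$ of Theorem \ref{thm:closed}. For odd $p$, each class $a'$ with $a'\mid\alpha$ and $2a'$-type indices now has a factor of $F$ above the top of $B_{p,a'}$. This should produce blocks $p^{\nu_p(L)}$ repeated in $h-1$ positions; the identity $L=nh/(de)$ comes from comparing $\nu_p(n)$ with $\max(\nu_p(a),\nu_p(b))$. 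At $p=2$ the computation is genuinely local: within the class $a'$, $B_{2,a'}$ is an interval of exponents $\le\nu_2(n)$ missing exactly the exponents $\nu_2(a)+1$ (if $a'\mid h$) and those dividing $b$, and $F$ has a factor at exponent $\nu_2(a)+1$ sitting inside that interval. So $q_b$, $r_b$, $\delta_b$ are nontrivial there. I expect this $p=2$, case 3 bookkeeping to be the main obstacle, in particular verifying that the resulting partial multiplicities reassemble into $(h-1)$ copies of $\nu_2(L)$ and one copy of $\nu_2(2bL/e)$. I would first check the claim numerically on small cases (e.g.\ $a=1$, $b=n=4$) to fix the pattern. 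I would then run the recursion in the proof of Theorem \ref{thm:closed} factor by factor, rather than relying on the closed formula.
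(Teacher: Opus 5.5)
Your route is the same as the paper's: factor $f$ into cyclotomics, read off $\ell=\deg\gcd(f,g)$, then apply Corollary~\ref{thm:originale} in the first two cases and Theorem~\ref{thm:global} in the third. Your free-rank count is correct. The gap is that the third case, the only one Corollary~\ref{thm:originale} does not cover, is never proved. At $p=2$ you explicitly defer the computation. At odd $p$ you state an expected answer based on a wrong picture: $F$ does not have a factor above the top of $B_{p,a'}$ (such a factor would only add its multiplicity to every entry of $D_{p,a'}$). Write $\alpha:=\nu_p(a)$, $\beta:=\nu_p(b)$, $\gamma:=\nu_p(n)$. New positive multiplicities occur only when $\gamma>\max(\alpha,\beta)$, and only for the coprime-to-$p$ parts $j$ that index the cyclotomic factors of $t^{h/p^{\min(\alpha,\beta)}}+1$. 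For these $j$, $A_{p,j}=\{0,\ldots,\min(\alpha,\beta)\}$, each with $k_h=1$ since $u=2$ and $v=1$ there, and it lies entirely \emph{below} $B_{p,j}=\{\max(\alpha,\beta)+1,\ldots,\gamma\}$. The entries come from the $q_b=0$ branch of \eqref{eq:closed} at $b=\max(\alpha,\beta)+1$. There $r_b=p^{\min(\alpha,\beta)}$ and $\alpha_b=\delta_b=\gamma-\max(\alpha,\beta)-1$, giving $p^{\min(\alpha,\beta)}$ entries equal to $\gamma-\max(\alpha,\beta)=\nu_p(L)$. Summing $\varphi(j)$ over $j$ gives $h$ such entries, not $h-1$, in addition to $\nu_p(b/e)$ from $j=1$. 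To recast this as $h-1$ copies of $\nu_p(L)$ plus one of $\nu_p(2bL/e)$, you need the fact that $\nu_p(L)$ and $\nu_p(b/e)$ are never both positive.

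At $p=2$, let $\alpha,\beta,\gamma$ now denote the $2$-adic valuations, so $\alpha<\min(\beta,\gamma)$. You must separate $\gamma>\beta$ from $\gamma\le\beta$.
\begin{itemize}
\item If $\gamma>\beta$, every odd $j>1$ dividing $h$ has $A_{2,j}=\{\alpha+1\}$ and $B_{2,j}=\{\beta+1,\ldots,\gamma\}$. This gives $2^\alpha$ entries equal to $\gamma-\beta$, hence $h-2^\alpha$ in total. For $j=1$ one has $B_{2,1}=\{0,\beta+1,\ldots,\gamma\}$. The key point is $\delta_0=0$, because $\alpha+1\in A_{2,1}$ lies between $0$ and $\beta+1$. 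This yields one entry $\gamma-\beta+1$ (the extra factor $2$ in $2bL/e$) and $2^\alpha-1$ entries $\gamma-\beta$.
\item If $\gamma\le\beta$, only $j=1$ contributes, with $A_{2,1}=\{\alpha+1,\gamma+1,\ldots,\beta\}$ and $B_{2,1}=\{0\}$, giving $1+\beta-\gamma$.
\end{itemize}
Your description of this configuration is also off. The $F$-factor at exponent $\alpha+1$ survives exactly when $a'\mid h$, and it sits below every nonzero element of $B_{2,a'}$, not inside it.

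Finally, two claims in Step~2 are inaccurate, although the conclusions there are right.
\begin{itemize}
\item It is false that only $a'=1$ has $n_{p,a'}>0$. Any $a'>1$ coprime to $p$ with $a'\mid\gcd(b,n)$ has $n_{p,a'}>0$ when $\nu_p(b)>\nu_p(n)$. What is true is that every such $a'\neq1$ has $\mu_{p,a'}=0$, because all $\Phi_{a'p^{h'}}$ with $h'\le\nu_p(n)$ then divide $f$.
\item In case~1 at $p=2$, odd $a'\mid h$ also give entries equal to $1$ when $\nu_2(n)>\nu_2(b)$, and $\mu_{2,1}$ can exceed $1$. So the total $d-h$ is correct, but not for the reason you state.
\end{itemize}
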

\begin{proof}
Observe that \[t^a+1=\prod_{j\mid2a,\,j\nmid a}\Phi_j(t) \qquad \mathrm{and} \qquad \frac{t^b-1}{t-1}=\prod_{j\mid b,\,j>1}\Phi_j(t).\] Let us distinguish the three cases in \eqref{eq:typeZ}.
\begin{enumerate}
\item If $ \nu_2(n)\leq \nu_2(a)$, $t^a+1$ is coprime to $g$, and hence $\gcd(f,g)=(t^e-1)/(t-1)$, giving $e-1$ zero diagonal elements in the Smith form of $f(C_g)$. In particular, every common cyclotomic factor has multiplicity $1$ in $f$, and thus we may apply Corollary \ref{thm:originale}. In particular, associated with the prime $2$ there are $d-h$ partial multiplicities equal to $1$ at $2$ and $1$ partial multiplicity equal to $1+\nu_2(b/e)$. At an odd prime $p$, the only possibly positive partial multiplicity is $\nu_p(b/e)$. We obtain the statement by gluing this local information.
\item When $\nu_2(b)\leq\nu_2(a)<\nu_2(n)$, $f$ is squarefree and its gcd with $g$ is $(t^d+1)(t^e-1)/(t-1)$, whose degree is $d+e-1$. Again by Corollary \ref{thm:originale}, the only possibly positive partial multiplicity is $\nu_p(b/e)$, for every prime $p$; this yields the claimed formula.
\item Suppose now that $ \nu_2(a)<\nu_2(b)$ and $\nu_2(a) < \nu_2(n)$. In this case, $ \gcd(f,g) = (t^d+1)(t^e-1)/((t^h+1)(t-1))    $ which determines that the free part of the cokernel of $f(C_g)$ is isomorphic to $\Z^{d+e-h-1}$. To analyze the torsion part, we apply Theorem \ref{thm:global} to $F=f/\gcd(f,g)$, $G=g/\gcd(f,g)$. Fix first an odd prime $p$ and write $\alpha:=\nu_p(a)$, $\beta:=\nu_p(b)$, and $\gamma:=\nu_p(n)$. The factors with coprime-to-$p$ part $1$ contribute a partial multiplicity $\nu_p(b/e)=\max(\beta-\gamma,0)$. Other positive partial multiplicities can only arise when $\gamma>\max(\alpha,\beta)$, and the corresponding coprime-to-$p$ parts are precisely the indices of the cyclotomic factors of $t^{h/p^{\min(\alpha,\beta)}}+1$. In turn, for each of these coprime-to-$p$ parts, the index sets associated with $F$ and $G$ are, respectively, $\{0,\ldots,\min(\alpha,\beta)\}$  (all with multiplicity $1$) and $\{\max(\alpha,\beta)+1,\ldots,\gamma\}$. By Theorem \ref{thm:closed}, each such index $j$ is associated with $p^{\min(\alpha,\beta)}$ positive partial multiplicities, all equal to $\nu_p(L)$. By Theorem \ref{thm:global}, each of these groups of partial multiplicities appears  $\varphi(j)$ times. Since $\sum_j \varphi(j)=h/p^{\min(\alpha,\beta)}$, overall we obtain $h$ partial multiplicities equal to $\nu_p(L)$. Note also that $\nu_p(L)$ and $\nu_p(b/e)$ cannot both be positive.

The bookkeeping differs at $p=2$. Still denoting $\alpha:=\nu_2(a), \beta:=\nu_2(b), \gamma:=\nu_2(n)$, we have that $\alpha$ is strictly smaller than both $\beta$ and $\gamma$. There are two subcases:
\begin{itemize}
\item Suppose first $\gamma>\beta$. For the coprime-to-$2$ parts not equal to $1$, which are the integers $j > 1$ that divide $2^{-\alpha} h$, the set of indices associated with $F$ is $\{\alpha+1\}$ and that associated with $G$ is $\{\beta+1,\ldots,\gamma\}$. By Theorem \ref{thm:global}, this gives $h-2^\alpha$ partial multiplicities equal to $\gamma-\beta$. For the odd part $1$, the set of indices associated with $G$ becomes $\{0,\beta+1,\ldots,\gamma\}$, and this results in $1$ partial multiplicity equal to $\gamma-\beta+1$ and $2^\alpha-1$ partial multiplicities equal to
$\gamma-\beta$.
\item If $\gamma\leq\beta$, only the coprime-to-$2$ part $1$ can generate positive partial multiplicities. The corresponding sets of indices are $\{\alpha+1,\gamma+1,\ldots,\beta\}$ for $F$ and
$\{0\}$ for $G$, and they result in $1$ partial multiplicity equal to $1+\beta-\gamma$.
\end{itemize}
In summary, at every prime, the positive partial multiplicities are the positive elements of the list consisting of $h-1$ copies of $\nu_p(L)$ and one of $\nu_p(2bL/e)$. Observing that $L\mid2bL/e$ and using Lemma \ref{lem:reordersmith}, we obtain the third case of the statement.
\end{enumerate}
\end{proof}

\begin{example}
For example, when $(q,a,b,n)=(1,3,6m+2,3)$, where $m \in \mathbb{N}$, Corollary \ref{cor:typeZeven} implies that the abelianization of the  corresponding group of type $\ZZ$ is isomorphic to $(\Z/ 2\Z)^2\oplus\Z/(12m+4)\Z$. 

These groups satisfy the assumptions in \cite{McDermott22}, and comparing with \cite[Corollary D]{NoferiniWilliamsBrieskorn}, we conclude that the associated manifolds are not cyclic branched covers of torus knots. Indeed, \cite[Corollary D]{NoferiniWilliamsBrieskorn} implies that the nontrivial invariant factors of a Brieskorn manifold $M(r,s,n)$ with $\gcd(r,s)=1$ and finite first homology are all equal. They are also not odd-degree cyclic covers of $S^3$ branched over a knot, because Plans' Theorem (see
\cite[p.~325]{Prz88} and the references therein) says that the first homology of such a manifold is a direct sum of two isomorphic groups; this is impossible for the family at hand, because the positive partial multiplicities of the prime $2$ are $\nu_2(12m+4),1,1$.
\end{example}

If $r+s$ is odd, the equality $(r-s)q+2A=0$ implies that $q$ is even. In Corollary \ref{cor:typeZodd} we study the case $q=2$. Letting $c:=r+s$ and $d:=r-s$, we obtain the representer polynomial $f(t)=(t^c+1)(t^d-1)/(t^2-1)$, where $c,d$ are positive odd integers.

\begin{corollary}\label{cor:typeZodd}
Let $c,d,n\geq1$, with $c,d$ odd, $f(t):=(t^c+1)(t^d-1)/(t^2-1)$, and $g(t):=t^n-1$. Let $\delta:=\gcd(c,n)$, $\varepsilon:=\gcd(d,n)$, and $h:=\gcd(c,d,n)$. Then,
\begin{equation}\label{eq:typeZodd}
\operatorname{coker}f(C_g)\cong
\begin{cases}
\Z^{\varepsilon-1}\oplus(\Z/2\Z)^{\delta-h}\oplus\Z/(d/\varepsilon)\Z,
 &\text{if} \ n \ \text{is odd},\\
\Z^{\delta+\varepsilon-2}\oplus\Z/(c/\delta)\Z\oplus\Z/(d/\varepsilon)\Z,
 &\text{if} \ n \ \text{is even}.
\end{cases}
\end{equation}
\end{corollary}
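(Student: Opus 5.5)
The plan is to reduce everything to Corollary \ref{thm:originale}, since $f$ turns out to be a squarefree product of cyclotomic polynomials. As $c$ is odd, $t^c+1=\prod_{j\mid 2c,\,j\nmid c}\Phi_j(t)=\prod_{k\mid c}\Phi_{2k}(t)$, and $t^d-1=\prod_{k\mid d}\Phi_k(t)$. Dividing by $t^2-1=\Phi_1\Phi_2$ gives
\[ f(t)=\prod_{k\mid c,\,k>1}\Phi_{2k}(t)\prod_{k\mid d,\,k>1}\Phi_k(t). \]
Here the first product has only even indices with odd part $k>1$, and the second has only odd indices. So $A=\{2k:k\mid c,\,k>1\}\cup\{k:k\mid d,\,k>1\}$ and all multiplicities $u=1$. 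The hypothesis of Corollary \ref{thm:originale} therefore holds automatically.

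First I will compute the free rank. If $n$ is odd, no even index divides $n$, so the common factors are $\Phi_k$ with $k\mid\varepsilon$, $k>1$, and $\ell=\varepsilon-1$. If $n$ is even, then $2k\mid n$ if and only if $k\mid\delta$, so $\ell=(\delta-1)+(\varepsilon-1)$. This gives the free parts $\Z^{\varepsilon-1}$ and $\Z^{\delta+\varepsilon-2}$ in \eqref{eq:typeZodd}.

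Next I compute $D_p$ prime by prime, with $n=\alpha p^\beta$. For odd $p$, the indices in $A$ with coprime-to-$p$ part $a$ are either $ap^h\mid d$ (with $a$ odd) or $ap^h=2k'$ with $k'\mid c$ (so $a$ even).
\begin{itemize}
\item For $a=1$: index $1\notin A$, so $\mu_{p,1}\geq1$, and $n_{p,1}=\#\{h:\beta<h\leq\nu_p(d)\}=\nu_p(d/\varepsilon)$.
\item For $a=2$, which divides $\alpha$ only when $n$ is even: $\mu_{p,2}\geq1$ because $2\notin A$, and $n_{p,2}=\nu_p(c/\delta)$.
\item For any other $a$ (odd $a>1$, or $a=2a'$ with $a'>1$): if $n_{p,a}>0$, then $ap^{\beta+1}$ divides $d$ (respectively $2c$). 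Hence every $ap^h$ with $h\leq\beta$ also lies in $A$, which forces $\mu_{p,a}=0$. These $a$ contribute nothing.
\end{itemize}
So at odd $p$ the only possibly positive partial multiplicities are $\nu_p(d/\varepsilon)$, together with $\nu_p(c/\delta)$ when $n$ is even.

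For $p=2$ with $n$ even, all indices in $A$ have $2$-adic valuation at most $1\leq\beta$, so every $n_{2,a}=0$. For $n$ odd ($\beta=0$), $n_{2,a}=1$ exactly when $2a\in A$, i.e.\ $a\mid c$ with $a>1$, and $a\mid n$ is also required. In that case $\mu_{2,a}=\varphi(a)$ if $a\notin A$, i.e.\ $a\nmid d$, and $\mu_{2,a}=0$ otherwise. Summing $\varphi(a)$ over the divisors of $\delta$ that do not divide $h$ yields $\delta-h$ partial multiplicities equal to $1$. The divisor $a=1$ contributes nothing, because $\Phi_2$ was cancelled.

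Finally I glue via Lemma \ref{lem:reordersmith}. The torsion is $(\Z/2\Z)^{\delta-h}$ times one cyclic factor of order $d/\varepsilon$ when $n$ is odd, with no clash since $d/\varepsilon$ is odd. When $n$ is even, it is the product over odd $p$ of the two cyclic contributions, which reassemble into $\Z/(c/\delta)\Z\oplus\Z/(d/\varepsilon)\Z$ as abstract groups. The main obstacle I expect is the bookkeeping in the third item above and the $p=2$, $n$ odd count: one must check carefully that $a\in A$ exactly when $a\mid d$, $a>1$, and that excluded indices ($1$ and $2$) are handled correctly.
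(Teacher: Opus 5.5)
Your proposal is correct and follows essentially the same route as the paper: factor $f$ as a squarefree product of cyclotomic polynomials, apply Corollary~\ref{thm:originale}, read off the free rank from $\gcd(f,g)$, and then work prime by prime. At odd $p$ you use the coprime-to-$p$ parts $1$ and $2$ plus a containment argument for all other parts, and at $p=2$ with $n$ odd you count the parts $j\mid\delta$, $j\nmid h$. One small point to tighten: for $a=1,2$ at odd $p$ you need $\mu_{p,a}=1$, not merely $\mu_{p,a}\geq1$, whenever $n_{p,a}>0$ (otherwise you would get several copies of the same cyclic factor), and this follows from the same containment argument as in your third item.
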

\begin{proof}
Since $c,d$ are odd, we have that 
\[ f(t)=\prod_{j\mid c,\,j>1}\Phi_{2j}(t) \prod_{j\mid d,\,j>1}\Phi_j(t).\]
In particular, $f$ is squarefree, and therefore we may apply Corollary \ref{thm:originale}. Let us first compute the free part of the module, which is associated with the gcd of $f$ and $g$. If $n$ is odd, then $\gcd(f,g)=(t^\varepsilon-1)/(t-1)$; and if $n$ is even, then $\gcd(f,g)=(t^\delta+1)(t^\varepsilon-1)/(t^2-1)$.

Let us now turn to the torsion part. At an odd prime $p$, the factors with coprime-to-$p$ part $1$ are associated with possibly positive partial multiplicity $\nu_p(d/\varepsilon)$. If $n$ is even, the factors with coprime-to-$p$ part $2$ also contribute $\nu_p(c/\delta)$. For any other coprime-to-$p$ part
$j$, the relevant set of indices of cyclotomic factors in $f$ is of the form $\{j p^i\}_{i=0}^k$, for some $k$, and the set of indices of factors of $g$ has the form $\{j p^i\}_{i=0}^{\nu_p(n)}$. Hence, one of these two sets must be contained in the other. Equivalently, either all such factors of $g$ divide $f$, or all such factors of $f$ divide $g$ (or both); thus, by Corollary \ref{thm:originale}, there is no associated positive partial multiplicity. For the prime $p=2$, having fixed a coprime-to-$2$ part $j$ that divides $n$, the only possible associated indices among the factors of $f$ are  $j$ and $2j$. If $n$ is even, both indices divide $n$, so there are no associated positive partial multiplicities. If $n$ is odd, a positive partial multiplicity arises precisely when $\Phi_{2j}$ divides $f$, while $\Phi_j$ divides $g$ but not $f$. In other words, it must happen that $j\mid\delta$ and $j\nmid h$. For each such $j$, Corollary \ref{thm:originale} gives $\varphi(j)$ partial multiplicities equal to $1$. Their total number is $\sum_{j\mid\delta,\,j\nmid h}\varphi(j)=\delta-h$.

Combining this local information via Lemma \ref{lem:reordersmith} we obtain \eqref{eq:typeZodd}.
\end{proof}

Finally, in Corollary \ref{cor:Zeveryq} we allow $q$ to take any value.

\begin{corollary}\label{cor:Zeveryq}
Let $n,q\geq1$ and $r,s,A\in\Z$ satisfy $r>|s|>0$ and $2A+(r-s)q=0$. Then, for the abelianization $G^{ab}$ of the group of type $\ZZ$ with these parameters, the following holds.
\begin{enumerate}
\item If $r+s$ is even, let $\kappa:=\gcd(n,q)$. Then, $G^{ab}$ is isomorphic to the direct sum of $\kappa$ copies of the group in \eqref{eq:typeZ}, with $a=(r+s)/2$, $b=(r-s)/2$, and $n$ replaced by $n/\kappa$.
\item If $r+s$ is odd, let $\kappa:=\gcd(n,q/2)$. Then, $G^{ab}$ is isomorphic to the direct sum of $\kappa$ copies of the group in \eqref{eq:typeZodd}, with $c=r+s$, $d=r-s$, and $n$ replaced by $n/\kappa$.
\end{enumerate}
\end{corollary}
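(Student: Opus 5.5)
The plan is to reduce to Corollaries \ref{cor:typeZeven} and \ref{cor:typeZodd} through the Composition Theorem \cite[Theorem 3.13]{NoferiniWilliamsApplications} applied with $h(t)=t^\kappa$. In both cases the representer polynomial has the form $f(t)=f_0(t^{m})$ for some integer $m$. When $r+s$ is even, $a=(r+s)/2$ and $b=(r-s)/2$ are integers, so
\[ f(t)=\frac{(t^{aq}+1)(t^{bq}-1)}{t^q-1}=f_0(t^q),\qquad f_0(x):=\frac{(x^a+1)(x^b-1)}{x-1}. \]
When $r+s$ is odd, $q$ is even, and with $c=r+s$, $d=r-s$ we get $f(t)=f_1(t^{q/2})$, where $f_1(x)=(x^c+1)(x^d-1)/(x^2-1)$. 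Put $m:=q$ or $m:=q/2$ accordingly, and set $\kappa:=\gcd(n,m)$.

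The key step is to pass from $f(C_{t^n-1})$ to a matrix of the form $(\tilde f\circ h)(C_{\tilde g\circ h})$ with $h(t)=t^\kappa$. We have $t^n-1=g_0(t^\kappa)$ with $g_0(x)=x^{n/\kappa}-1$. However, $f(t)=f_0(t^m)$ is a polynomial in $t^\kappa$ only via $f_0(x^{m/\kappa})$, so it is not directly of the form $\tilde f(t^\kappa)$ with $\tilde f=f_0$. I would handle this by a unimodular change exploiting $C^n=I$ for $C:=C_{t^n-1}$. Since $\gcd(m,n)=\kappa$, write $m=\kappa m'$ with $\gcd(m',n/\kappa)=1$. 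Then $C^{m}=(C^\kappa)^{m'}$. In the circulant ring modulo $x^{n/\kappa}-1$, the map $x\mapsto x^{m'}$ is a ring automorphism of $\Z[x]/(x^{n/\kappa}-1)$ induced by a permutation of the monomial basis. Under the Composition Theorem, $f(C)=f_0\bigl((C^\kappa)^{m'}\bigr)$ is then unimodularly similar to $\bigoplus_{\kappa} f_0(C_{g_0}^{m'})$.

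Next I would show that $f_0(C_{g_0}^{m'})$ is permutation-similar to $f_0(C_{g_0})$. Conjugating $C_{g_0}$, the cyclic permutation matrix of size $n/\kappa$, by the permutation $P$ with $P e_i=e_{m'i \bmod n/\kappa}$ sends $C_{g_0}$ to $C_{g_0}^{m'}$, because $m'$ is invertible modulo $n/\kappa$. Therefore $f(C_{t^n-1})\sim\bigoplus_{j=1}^{\kappa} f_0(C_{x^{n/\kappa}-1})$, and taking cokernels gives $\kappa$ copies of the group in \eqref{eq:typeZ} (respectively \eqref{eq:typeZodd} with $f_1$), with $n$ replaced by $n/\kappa$.

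Finally, since $f(C_{t^n-1})$ presents $G^{ab}$ by the discussion in the introduction, both parts follow. The main obstacle I expect is the bookkeeping in the composition step. The Composition Theorem is stated for $(f\circ h)(C_{g\circ h})$, so one must first rewrite $f(t)=f_0(t^{\kappa m'})=(f_0(x^{m'}))\circ t^\kappa$. Taking $\tilde f(x)=f_0(x^{m'})$ and $\tilde g=x^{n/\kappa}-1$, the theorem gives $\kappa$ copies of $\tilde f(C_{\tilde g})=f_0(C_{\tilde g}^{m'})$, and the permutation similarity above removes the $m'$. One should also double-check that the case split in Corollary \ref{cor:typeZeven} is invariant: the relevant $\nu_2$ conditions are now on $a,b,n/\kappa$, which is exactly how the statement is phrased.
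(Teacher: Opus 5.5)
Your proof is correct and follows essentially the paper's route. The paper notes that the permutation matrix $C_{t^n-1}^q$ (resp.\ $C_{t^n-1}^{q/2}$) splits into $\kappa$ cycles of length $n/\kappa$, and then invokes Corollaries~\ref{cor:typeZeven}, \ref{cor:typeZodd} and the Composition Theorem; your two steps, the Composition Theorem with $h(t)=t^\kappa$ followed by the relabeling permutation $e_i\mapsto e_{m'i}$ that absorbs the coprime exponent $m'$, are an explicit proof of that cycle-structure observation.
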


\begin{proof}
Observe that the permutation matrix $C_{t^n-1}^q$ has $\gcd(n,q)$ cycles, each of length $n/\gcd(n,q)$; a similar remark holds for the second case, except that we replace $q$ by $q/2$. Then, the statement follows by combining Corollary \ref{cor:typeZeven}, Corollary \ref{cor:typeZodd}, and the Composition Theorem \cite[Theorem 3.13]{NoferiniWilliamsApplications}. 
\end{proof}

\subsubsection{Cyclic branched covers of
\texorpdfstring{$S^3$}{the 3-sphere}}

Let $\Lambda:=\Z[t,t^{-1}]$, and denote by $\Sigma_n(K)$ the $n$-fold cyclic cover of $S^3$, branched over the knot $K$. If $K$ has cyclic Alexander module $\Lambda/\langle f\rangle$, then the first homology of its $n$-fold cyclic branched cover is $\Lambda/\langle f,t^n-1\rangle$ \cite[Section 2]{SilverWilliamsTorsion}; see also \cite{NoferiniWilliamsBrieskorn}.

A known sufficient condition \cite[Corollary 1.2]{KY10} for the Alexander module to be cyclic is that $K$ has tunnel number \cite[p.~113]{MSK96} at most $1$. A cyclic Alexander module is isomorphic to $\Lambda/\langle\Delta_K\rangle$, where $\Delta_K$ denotes the Alexander polynomial
\cite[Lemma 3.1]{KY10}. We may assume (up to multiplication by a signed power of $t$, which is invertible $\bmod t^n-1$) that $\Delta_K$ is a polynomial in $t$; the trivial case $\Delta_K(t)=1$ corresponds to the zero module. Hence, for every knot $K$ of tunnel number at most $1$ and for every $n\geq1$, we have for the first homology of $\Sigma_n(K)$
\begin{equation}\label{eq:cchomology}
 H_1(\Sigma_n(K))\cong\Z[t]/\langle\Delta_K,t^n-1\rangle \cong\operatorname{coker}\Delta_K(C_{t^n-1}).
\end{equation}
When $\Delta_K$ is a product of cyclotomic polynomials, Theorem \ref{thm:global} therefore yields the full first homology, with no further restriction on the Alexander polynomial.

For example, $(1,1)$-knots in $S^3$ have tunnel number at most $1$ \cite[Section 4]{GMM05}. A $(1,1)$-knot is a knot in a manifold obtained by gluing two solid tori, meeting each solid torus in a boundary-parallel arc
\cite[Section 1]{Cattabriga06}. A palindromic polynomial $f\in\Z[t]$ is, up to multiplication by signed powers of $t$, the Alexander polynomial of a knot in $S^3$ if and only if $f(1)=\pm1$ \cite[Theorem 3.1]{Cattabriga06}. Using also \cite[Theorem 1 and concluding remark]{Fujii96}, such an Alexander polynomial can always be realized by a $(1,1)$-knot in $S^3$. Hence, this class is a rich source of applications for our results. One example was studied in \cite[Corollary D]{NoferiniWilliamsBrieskorn}: For coprime $r,s$, the torus knot $T(r,s)$ has the Alexander polynomial $\Delta_{T(r,s)}(t)=(t^{rs}-1)(t-1)/((t^r-1)(t^s-1))$  which, in contrast to the examples below, is squarefree.

\begin{example}\label{ex:Kr}
By the above comments, for every $r\geq1$, there is a $(1,1)$-knot $K_r\subset S^3$ with Alexander polynomial $\Phi_6^r$. For the $6$-fold cyclic cover of $S^3$ associated with such a knot, Theorem \ref{thm:global} and \eqref{eq:cchomology} yield
\[ H_1(\Sigma_6(K_r)) \cong\Z^2\oplus\Z/2^{r-1}\Z\oplus\Z/6^{r-1}\Z.\]
Indeed, with $f=\Phi_6^r$ and $g(t)=t^6-1$, we have $\gcd(f,g)=\Phi_6$, implying $F=\Phi_6^{r-1}$, and $G=\Phi_1\Phi_2\Phi_3$. The only relevant primes are $2$, at which the factor $\Phi_3$ of $G$ gives $2$ partial multiplicities equal to $r-1$, and $3$, at which the factor $\Phi_2$ yields $1$ partial multiplicity also equal to $r-1$.
\end{example}

Another interesting family that we can analyze is that of cable knots. Let $K$ be the $(r,s)$-cable of the torus knot $T(p,q)$, with $r,s,p,q\geq2$, $\gcd(r,s)=\gcd(p,q)=1$. Its Alexander polynomial is $\Delta_K(t)=\Delta_{T(r,s)}(t)\Delta_{T(p,q)}(t^r)$ \cite[Eqn. (1)]{Hedden05}. If $s=rpq\pm1$, then $K$ has tunnel number $1$ \cite{MS91}, so its Alexander module is cyclic and Theorem \ref{thm:global} is applicable. For arbitrary cable knots, by \cite[p.~220]{LM85} and having set $h:=\gcd(n,r)$, we have
\begin{equation}\label{eq:cable}
H_1(\Sigma_n(K))\cong H_1(\Sigma_n(T(r,s))) \oplus \left(H_1(\Sigma_{n/h}(T(p,q)))\right)^{ h}.
\end{equation}

\begin{example}
We can make \eqref{eq:cable} even more explicit by invoking \cite[Corollary D]{NoferiniWilliamsBrieskorn}. For instance, for the $(2,8m+5)$-cable $K_m$ of $T(2,2m+1)$, $m\geq1$, also described as $T(5,2;4,m)$ in \cite[Theorem 1]{Lee12}, we obtain
\[H_1(\Sigma_4(K_m))\cong \Z/(2m+1)\Z\oplus\Z/((2m+1)(8m+5))\Z.\]
The covers $\Sigma_4(K_m)$ are thus pairwise not homeomorphic. Moreover, again by \cite[Corollary D]{NoferiniWilliamsBrieskorn}, they cannot be Brieskorn manifolds $\Sigma_w(T(u,v))$ with coprime $u,v$, because their first homology is finite but the nontrivial invariant factors are not all equal.
\end{example}

More generally, recall \cite[Proposition 2.4]{BZ03} that a Seifert surface for a knot $K$ is a compact, connected, oriented surface in $S^3$ whose boundary is $K$. A Seifert surface is associated with an integer matrix $V$,
called a Seifert matrix \cite[Definition 8.5]{BZ03}, such that the matrix $V^T-tV$ presents the Alexander module of $K$ over $\Lambda$ \cite[Theorem 8.8]{BZ03}. For a connected sum of two knots $K_1,K_2$ (denoted $K_1 \# K_2$) each associated with Seifert matrices $V_1,V_2$ we may assume that the Seifert matrix has  the form $V=V_1\oplus V_2$
\cite[Proposition 8.14]{BZ03}. Thus $V^T-tV=(V_1^T-tV_1)\oplus(V_2^T-tV_2)$, and the Alexander module of the connected sum  is the direct sum of those associated with the two knots. Quotienting by $t^n-1$, one similarly sees that the first homology of $\Sigma_n(K_1\mathbin{\#} K_2)$ is isomorphic to the direct sum of those of $\Sigma_n(K_1)$ and  $\Sigma_n(K_2)$.

\begin{example}
In the same notation as Example \ref{ex:Kr}, the knots $K_1\mathbin{\#}K_1$ and $K_2$ both have Alexander polynomial
$\Phi_6^2$. However, $H_1(\Sigma_6(K_1\mathbin{\#}K_1))\cong\Z^4$, whereas $H_1(\Sigma_6(K_2))\cong\Z^2\oplus\Z/2\Z\oplus\Z/6\Z$.
\end{example}

\subsubsection{Hyperbolic knots}

In this subsection, we consider hyperbolic knots in $S^3$. A knot is called hyperbolic if its complement $S^3\setminus K$ admits a complete Riemannian metric of finite volume and constant sectional curvature $-1$ \cite[Sections 0.2.2 and 5.4]{Purcell20}. Following \cite[Section 1]{Lee15}, we denote by $T(a,b;r,s)$ the knot obtained from the torus knot $T(a,b)$ by adding $s$ full twists on $r$ adjacent strands; $s$ may be negative, depending on the orientation of the twists.  Observe that by  \cite[Fact 1.2]{MSK96} the knots in Proposition \ref{prop:hyperbolic} have tunnel number $1$, and by \cite[Corollary 1.2]{Lee15} they are hyperbolic. (For $m=3$, see also the proof of \cite[Lemma 3.1(2)]{Lee15}.)

\begin{proposition}\label{prop:hyperbolic}
Let $m\geq3$ be an odd integer, and let
\[ K_m:=T(m,4;2,-m),\qquad \epsilon:=\begin{cases}
 1,&\text{if }m\equiv1\bmod4,\\
 -1,&\text{if }m\equiv3\bmod4,
 \end{cases} \qquad k:=\frac{m+\epsilon}{2}.\]
Then $K_m$ has Alexander polynomial
\begin{equation}\label{eq:KmAlexander}
 \Delta_{K_m}(t) =\left(\frac{t^m+1}{t+1}\right)^2 \frac{t^{2k}+1}{t^2+1} =\prod_{\substack{a\mid m\\a>1}}\Phi_{2a}(t)^2 \prod_{\substack{b\mid k\\b>1}}\Phi_{4b}(t).
\end{equation}
Moreover, the knots $K_m$ are pairwise distinct for distinct values of $m$.
\end{proposition}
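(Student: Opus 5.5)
We need to compute the Alexander polynomial of the twisted torus knot $T(m,4;2,-m)$, then check the cyclotomic factorization, and finally show the knots are pairwise distinct.

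The plan for the Alexander polynomial is to use a known formula for twisted torus knots with twists on two strands. Adding $s$ full twists on $2$ adjacent strands amounts to changing the crossings of a clasp, so the skein relation applies. Let $T_s:=T(m,4;2,s)$, and let $L_s$ be the two-component link obtained by smoothing one crossing of the twist region. The Conway skein relation gives a linear recursion
\[ \Delta_{T_{s+1}}(t)-\Delta_{T_s}(t)=(t^{1/2}-t^{-1/2})\,\Delta_{L}(t), \]
where $L$ is independent of $s$ up to isotopy, or at least has an explicitly computable Alexander polynomial. A full twist consists of two crossings, so I would iterate twice per full twist. The cleaner route I would try first is the literature: Morton's formula, or Lee's computations in \cite{Lee15}, for the Alexander polynomial of $T(p,q;2,s)$. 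These express it as $\Delta_{T(p,q)}$ plus $s$ times a correction term built from $(t^{p}\!-\!1)$, $(t^q\!-\!1)$ type quotients.

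Specializing to $p=m$, $q=4$ and $s=-m$, I would then simplify by clearing denominators and verify the identity
\[ (t^2+1)(t+1)^2\,\Delta_{K_m}(t)=(t^m+1)^2(t^{2k}+1) \]
up to units $\pm t^j$. The case split $m\equiv 1,3 \bmod 4$ should appear through the parity of $k$: we need $(m+\epsilon)/2$ odd, so that $t^{2k}+1$ is divisible by $t^2+1$. Here I would use that $4\mid m+\epsilon$ fails, i.e.\ that $k$ is odd. Indeed, for $m=4j+1$ we get $k=2j+1$, and for $m=4j+3$ we get $k=2j+1$. I expect this algebraic simplification of the correction term to be the main obstacle. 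As a sanity check before doing it in general, I would compare with small cases, $m=3$ and $m=5$, against knot tables.

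The second equality in \eqref{eq:KmAlexander} is routine. Since $m$ is odd, $t^m+1=\prod_{a\mid m}\Phi_{2a}(t)$, and dividing by $t+1=\Phi_2$ removes $a=1$. Since $k$ is odd, $t^{2k}+1=\prod_{b\mid k}\Phi_{4b}(t)$, and dividing by $t^2+1=\Phi_4$ removes $b=1$.

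For distinctness, the Alexander polynomial is a knot invariant, so it suffices to compare degrees. The degree of $\Delta_{K_m}$ is $2(m-1)+2k-2$, which is strictly increasing in $m$: passing from $m$ to $m+2$ raises $2(m-1)$ by $4$, while $k$ changes by at most $2$ in absolute value. Hence distinct $m$ give distinct knots.
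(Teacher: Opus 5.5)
Your overall route is the paper's: take the Alexander polynomial of $T(m,q;2,s)$ from \cite[Section 3]{Morton06}, specialize to $q=4$, $s=-m$, factor into cyclotomics, and compare degrees. However, your proposal stops exactly where the proof has to start. The identity $(t^2+1)(t+1)^2\Delta_{K_m}=(t^m+1)^2(t^{2k}+1)$ is only announced, not derived.

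The form you expect for the formula is also wrong, so the planned simplification would not go as you describe. The twisted strands are adjacent braid strands, hence co-oriented, so $\Delta_{T(m,q;2,s)}$ is not $\Delta_{T(m,q)}$ plus $s$ times a correction; it depends on $s$ through $t^{2s}$. For the same reason, in your skein variant the link obtained by smoothing a crossing of the twist region $\sigma_1^{2s}$ contains $\sigma_1^{2s-1}$. That link depends on $s$, and you get a second-order recursion, not a first-order one. Morton's formula reads, up to a power of $t$,
\[\Delta_{T(m,q;2,s)}(t)=\frac{1-t}{(1-t^m)(1-t^q)}H(t),\qquad H(t):=1-\frac{1-t^{2s}}{1+t}\bigl(t^{\rho q+1}+t^{(m-\rho)q}\bigr)-t^{mq+2s},\]
where $0<\rho<m$ and $\rho q\equiv-1\bmod m$.

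The missing ingredient is this parameter $\rho$. For $q=4$ one has $\rho=(m-1)/4$ if $m\equiv1\bmod4$ and $\rho=(3m-1)/4$ if $m\equiv3\bmod4$. Hence $\{\rho q+1,(m-\rho)q\}$ equals $\{m,3m+1\}$, respectively $\{3m,m+1\}$. This is where the case split comes from, not the parity of $k$, which is odd in both cases. With $s=-m$ one gets $H(t)=(1-t^{2m})\bigl(1+t^{-2m}(t^{\rho q+1}+t^{(m-\rho)q})/(1+t)\bigr)$. Therefore $t^m(1+t)H=(1-t^{2m})(1+t^m)(1+t^{m+1})$, respectively $t^{m-1}(1+t)H=(1-t^{2m})(1+t^m)(1+t^{m-1})$. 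Using $m+\epsilon=2k$ and $1-t^4=(1-t)(1+t)(1+t^2)$, this yields the first equality in \eqref{eq:KmAlexander}. Your cyclotomic factorization (the second equality) is fine.

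In the distinctness step there is also a small slip. Saying that ``$k$ changes by at most $2$ in absolute value'' only shows that the degree change $4+2(k'-k)$ is nonnegative, not positive. You need $k$ to be nondecreasing: $k_{m+2}-k_m=1-\epsilon\in\{0,2\}$. Then the degree $2m+2k-4$, which is $3m-3$ or $3m-5$, increases by $4$ or $8$ at each step.
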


\begin{proof}
We start by the general formula given in \cite[Section 3]{Morton06}. If $0<\rho<m$ and $\rho q\equiv-1\bmod m$, then, up to a power of $t$,
\[ \Delta_{T(m,q;2,s)}(t) =\frac{1-t}{(1-t^m)(1-t^q)}\underbrace{\left(1-\frac{1-t^{2s}}{1+t}
 (t^{\rho q+1}+t^{(m-\rho)q})-t^{mq+2s}\right)}_{:=H(t)}.\]
For $q=4$, we have $\rho=(m-1)/4$ if $m\equiv1\bmod4$, and $\rho=(3m-1)/4$ otherwise. Substituting these values and $s=-m$ yields
\[ \begin{cases}
 t^m(1+t)H(t)=(1-t^{2m})(1+t^m)(1+t^{m+1}),
    &\text{if }m\equiv1\bmod4,\\
 t^{m-1}(1+t)H(t)=(1-t^{2m})(1+t^m)(1+t^{m-1}),
    &\text{if }m\equiv3\bmod4.
 \end{cases}\]
This proves the fractional expression between equalities in \eqref{eq:KmAlexander}. For the factorization on the right hand side of \eqref{eq:KmAlexander}, note that both $m$ and $k$ are odd and apply standard factorizations of the polynomials $t^m+1$ and $t^{2k}+1$.

 Finally, suppose that two knots in the family were equivalent; then, their Alexander polynomials would be equal up to multiplication by a signed power of $t$. Since, for all $m$, $\Delta_{K_m}(0)=1$, it suffices to observe that the map
\[  \{3,5,7,\,\dots\} \rightarrow \mathbb{N}, \qquad m \mapsto \deg \Delta_{K_m}(t) = \begin{cases} 3m-3 \ &\mathrm{if} \ m \equiv 1 \bmod 4,\\
3m-5 \ &\mathrm{if} \ m \equiv 3 \bmod 4
\end{cases} \]  
is injective. Hence, the knots must be pairwise distinct.
\end{proof}

Some concrete examples of the Alexander polynomials in Proposition \ref{prop:hyperbolic} are
\[\Delta_{K_5}=\Phi_{10}^2 \Phi_{12}, \qquad \Delta_{K_9}=\Phi_6^2\Phi_{18}^2\Phi_{20}, \qquad \Delta_{K_{15}}=\Phi_6^2\Phi_{10}^2\Phi_{28}\Phi_{30}^2.\]
We now compute the homology for the branched covers of the knots in Proposition \ref{prop:hyperbolic}. If $n$ is even and $\gcd(m,n)>1$, then $\Delta_{K_m}$ and $t^n-1$ have common cyclotomic factors of multiplicity $2$ in $\Delta_{K_m}$. Thus Corollary \ref{thm:originale} does not apply. Our first step is to apply Theorem \ref{thm:global} to the squared factor in \eqref{eq:KmAlexander}.

\begin{lemma}\label{lem:Pm2}
Let $m\geq 1$ be odd, $n\geq1$, and define $P_m(t):=(t^m+1)/(t+1)$ and $d:=\gcd(m,n)$.
Then
\begin{equation}\label{eq:Pm2}
 \operatorname{coker}\left(P_m(C_{t^n-1})^2\right) \cong
 \begin{cases}
 (\Z/4\Z)^{d-1},&\text{if}\ n \ \text{is odd},\\
 \Z/m^2\Z,&\text{if}\ n \ \text{is even and} \ d=1,\\
 \Z^{d-1}\oplus(\Z/(n/d)\Z)^{d-2}
       \oplus\Z/\left(nm^2/d^2\right)\Z,
       &\text{if}\ n \ \text{is even and} \ d>1.
 \end{cases}
\end{equation}
\end{lemma}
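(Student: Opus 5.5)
We apply Theorem \ref{thm:global} to $f:=P_m^2$ and $g:=t^n-1$. Since $m$ is odd, $P_m=\prod_{a\mid m,\,a>1}\Phi_{2a}$, so
\[ f=\prod_{a\mid m,\,a>1}\Phi_{2a}^2 .\]
The factor $g$ is squarefree, so Theorem \ref{thm:global} applies. We first determine $z=\gcd(f,g)$. If $n$ is odd, then $2a\nmid n$ for every $a$, so $z=1$. If $n$ is even, then $z=\prod_{a\mid d,\,a>1}\Phi_{2a}=P_d$, which has degree $d-1$ and accounts for the summand $\Z^{d-1}$. In that case $F=\prod_{a\mid m,\,a>1}\Phi_{2a}^{k(2a)}$, with $k(2a)=1$ if $a\mid d$ and $k(2a)=2$ otherwise, and $G=(t^n-1)/P_d$. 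We then compute the local Smith forms prime by prime using Theorem \ref{thm:closed} and glue them with Lemma \ref{lem:reordersmith}.

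\emph{Case $n$ odd.} At $p=2$, the coprime-to-$2$ parts that are shared are the odd $a$ with $a\mid d$, $a>1$. For each such $a$ we have $A_{2,a}=\{1\}$ with $k_1=2$ and $B_{2,a}=\{0\}$. Then $q_0=r_0=0$, $\alpha_0=2$ and $\delta_0=0$, so each $a$ contributes $\varphi(a)$ entries equal to $4$. Summing gives $\sum_{a\mid d,\,a>1}\varphi(a)=d-1$ entries equal to $4$. At an odd prime $p$, every coprime-to-$p$ part of an index of $f$ is even, while every coprime-to-$p$ part of an index of $g$ is odd. Hence all relevant $B_{p,a}$ meet no $A_{p,a}$, and every block is $I$.

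\emph{Case $n$ even, at $p=2$.} Write $n=2^\beta\alpha$. For odd $a\mid d$ with $a>1$, we get $A_{2,a}=\{1\}$ with $k_1=1$ and $B_{2,a}=\{0,2,\dots,\beta\}$. Theorem \ref{thm:closed} then gives one entry $2^\beta$ from $b=0$, and only units from the $b\ge2$: there the left-hand side of \eqref{eq:division} is $0$ and $\alpha_b=\delta_b$. For $a=1$ (and for odd $a\mid\alpha$ with $a\nmid m$) there is no factor of $F$ with that coprime-to-$2$ part, so those blocks are trivial. Altogether, $p=2$ contributes $d-1$ entries equal to $2^\beta$. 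This matches the target, since $\nu_2(n/d)=\nu_2(nm^2/d^2)=\beta$; when $d=1$ there are no such entries, which is consistent with $\nu_2(m^2)=0$.

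\emph{Case $n$ even, at odd $p$.} Write $\gamma:=\nu_p(n)$, $\mu:=\nu_p(m)$ and $\nu_p(d)=\min(\gamma,\mu)$. Write each index $2a$ of $F$ as $2a'p^h$. We sort the shared coprime-to-$p$ parts $2a'$, with $a'\mid\gcd(m,\alpha)$, into two groups.
\begin{itemize}
\item For $a'>1$, the $F$-indices are $h\le\mu$. Those with $h\le\min(\gamma,\mu)$ have been cancelled into $z$, and those with $h>\gamma$ remain with multiplicity $2$. The $G$-indices are $\{h\le\gamma\}\setminus\{h\le\min(\gamma,\mu)\}$. So either $B_{p,2a'}=\varnothing$, which happens when $\gamma\le\mu$, or $A_{p,2a'}=\varnothing$. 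In both cases the contribution is trivial.
\item For $a'=1$, the index $h=0$ is absent from $F$ (because $a>1$). Hence $B_{p,2}=\{0\}\cup\{h: \mu<h\le\gamma\}$, while $A_{p,2}=\{h: \gamma<h\le\mu\}$ with $k_h=2$.
\end{itemize}
Applying Theorem \ref{thm:closed} to the block with coprime-to-$p$ part $2$ should give exactly one positive entry $p^{\nu_p(m^2/d^2)}$ when $\gamma<\mu$, coming from $b=0$ with $\alpha_0=2(\mu-\gamma)$. We must also account for the parts $a'$ dividing $\alpha$ but not $m$, and especially for the indices with coprime-to-$p$ part $a'$ odd. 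These are where the $G$-factors $\Phi_{a'p^h}$ meet $F$-factors $\Phi_{2a'p^{h'}}$ after reduction. Here I expect to have to redo the grouping: modulo $p$, $a'$ and $2a'$ are distinct coprime-to-$p$ parts, so the relevant entries must instead come from the $p$-adic structure of the blocks with coprime-to-$p$ part $2a'$, namely the $G$-factors $\Phi_{2a'p^b}$ with $b\le\gamma$, when $a'\mid d$. The target predicts $d-2$ entries $p^{\nu_p(n/d)}$ and one entry $p^{\nu_p(nm^2/d^2)}$.

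The main obstacle is this odd-prime bookkeeping. We have to verify that the $\delta_b$ corrections in \eqref{eq:closed} produce exactly $\nu_p(n/d)=\max(\gamma-\mu,0)$ on $d-2$ positions, and that the $\varphi(a)$ multiplicities sum to $d-2$ plus one distinguished position carrying the extra $\nu_p(m^2/d^2)$. I would carry this out by a direct case split on $\gamma\le\mu$ versus $\gamma>\mu$, mirroring the proof of Corollary \ref{cor:typeZeven}. Finally, we observe that $(n/d)\mid nm^2/d^2$ and reassemble the local data with Lemma \ref{lem:reordersmith}; for $d=1$ this leaves only the single entry $m^2$.
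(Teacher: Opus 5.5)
Your overall route is the paper's: apply Theorem~\ref{thm:global} with $F=P_m^2/P_d$ and $G=(t^n-1)/P_d$, compute the local blocks with Theorem~\ref{thm:closed}, and glue them with Lemma~\ref{lem:reordersmith}. Your treatment of $n$ odd, of $d=1$, and of the prime $2$ is correct. The gap is the odd-prime analysis for $n$ even and $d>1$. This is exactly where the summand $(\Z/(n/d)\Z)^{d-2}$ arises, and your bookkeeping there is wrong.

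You write that the factors $\Phi_{2a'p^h}$ with $h\le\min(\gamma,\mu)$ ``have been cancelled into $z$''. But $z=P_d$ removes only one of the two copies in $P_m^2$; you noted this yourself earlier, when you set $k(2a)=1$ for $a\mid d$. Hence $A_{p,2a'}=\{0,\dots,\mu\}$ for $a'>1$ and $A_{p,2}=\{1,\dots,\mu\}$, with $k_h=1$ for $h\le\min(\gamma,\mu)$ and $k_h=2$ for $\gamma<h\le\mu$. Two of your claims fail as a result.
\begin{itemize}
\item For $a'=1$ and $\gamma\le\mu$, the exponent at $b=0$ is $\alpha_0=\gamma+2(\mu-\gamma)=2\mu-\gamma=\nu_p(nm^2/d^2)$, not $2(\mu-\gamma)$. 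Your value contradicts the target whenever $0<\gamma\le\mu$; for $\gamma=\mu$ your sets even give no positive entry at all.
\item For $\gamma>\mu$, the blocks with $a'>1$ are not trivial: both $A_{p,2a'}$ and $B_{p,2a'}=\{\mu+1,\dots,\gamma\}$ are nonempty.
\end{itemize}

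The missing $d-2$ entries also do not come from where you suggest. Consider any block whose coprime-to-$p$ part is not of the form $2a'$ with $a'\mid m$, in particular any with odd coprime-to-$p$ part. Such a block has empty $A$-set, so the left-hand side of \eqref{eq:division} is $0$ and $\alpha_b=\delta_b$; it contributes only units.

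The entries actually come from $b=\mu+1$ in the blocks with coprime-to-$p$ part $2a'$, where $a'\mid d/p^{\mu}$ and $\gamma>\mu\geq1$:
\begin{itemize}
\item At $b=\mu+1$ one has $q_{\mu+1}=0$ and $\alpha_{\mu+1}=\delta_{\mu+1}=\gamma-\mu-1$.
\item The remainder is $r_{\mu+1}=p^\mu-2$ for $a'=1$, because $\Phi_2$ is a factor of $G$ at $b=0$, and $r_{\mu+1}=p^\mu$ for $a'>1$. Each block thus yields $r_{\mu+1}$ entries $p^{\gamma-\mu}$.
\item For $a'=1$, the $b=0$ entry is $p^\gamma$; here $\delta_0=0$ because $1\in A_{p,2}$.
\end{itemize}
Summing with the weights $\varphi(a')$ gives $(p^\mu-2)+(d/p^\mu-1)p^\mu=d-2$ entries $p^{\nu_p(n/d)}$, plus one entry $p^{\nu_p(nm^2/d^2)}$.

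The subcase $\mu=0<\gamma$ needs its own count. There $A_{p,2}=\varnothing$, and each block with $a'>1$, $A=\{0\}$ and $B=\{1,\dots,\gamma\}$ gives one entry $p^\gamma$, for $d-1$ entries in total.

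Your proposal explicitly defers this computation as ``the main obstacle'', and with the wrong multiplicities it could not succeed as set up. Until it is done correctly, the third case of \eqref{eq:Pm2} is not proved.
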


\begin{proof}
Since $P_m(t)=\prod_{a\mid m,\,a>1}\Phi_{2a}(t)$, Corollary \ref{thm:originale} can be applied directly if $n$ is odd or $d=1$ (or both). For odd $n$, the only positive partial multiplicities are at the prime $2$, namely, the only nonunit invariant factor is $4$ ($d-1$ times). For even $n$ and $d=1$, Corollary \ref{thm:originale} yields
one partial multiplicity equal to $2\nu_\ell(m)$ at each prime $\ell\mid m$. These observations settle the first two cases of \eqref{eq:Pm2}.

Thus, we henceforth assume that $n$ is even and $d>1$. Note first $\gcd(P_m^2,t^n-1)=P_d$, implying that the Smith form of $P_m(C_{t^n-1})^2$ has $d-1$ zero diagonal entries. To obtain the invariant factors, we shall apply Theorem \ref{thm:global} to $F:=P_m^2/P_d$ and $G(t):=(t^n-1)/P_d(t)$. Once again, we separately analyze the prime $2$ and the odd primes.
\begin{itemize}
\item For $p=2$, with $\beta:=\nu_2(n)$, every divisor $a>1$ of $d$ corresponds to the sets $A_{2,a}=\{1\}$ and
$B_{2,a}=\{0\}\cup\{2,\ldots,\beta\}$ in the notation of Theorem \ref{thm:global}. The factor $\Phi_{2a}$ has multiplicity $k(2a)=1$ in $F$. The only positive partial multiplicity arises when considering $0 \in B_{2,a}$,
and is equal to $\beta$, and it appears $\sum_{a\mid d,\,a>1}\varphi(a)=d-1$ times.
\item Fix now an odd prime $p$, and define $v:=\nu_p(m)$ and $\beta:=\nu_p(n)$. Applying again Theorem \ref{thm:global}, positive partial multiplicities can occur only for indices whose coprime-to-$p$ part is $2a$, with $a\mid d/p^{\min(v,\beta)}$. If $\beta\leq v$, the only factor of $G$ with such an index
is $\Phi_2$. The corresponding factors of $F$ are $\Phi_{2p^h}$, $1\leq h\leq v$, with multiplicity $1$ for $h\leq\beta$ and $2$ otherwise. Inputting these data into Theorem \ref{thm:global} we get  $1$ partial multiplicity equal to $\beta+2(v-\beta)=2v-\beta$. If instead $\beta>v\geq1$, the factors $\Phi_{2ap^h}$ of $F$ have multiplicities $k(2ap^h)=1$ for every $h\in A_{p,2a}$. For each block $D_{p,2a}$ in Theorem \ref{thm:global}, the division in \eqref{eq:division} therefore gives $q_{v+1}=0$ and the following table:
\[ \begin{array}{c|c|c|c}
 &A_{p,2a}&B_{p,2a}&r_{v+1}\\ \hline
 a=1&\{1,\ldots,v\}&\{0\}\cup\{v+1,\ldots,\beta\}&p^v-2\\
 a>1&\{0,\ldots,v\}&\{v+1,\ldots,\beta\}&p^v
 \end{array}\]
The resulting positive partial multiplicities are, for each value of $a$, $\beta-v$ (repeated $r_{v+1}$ times); in addition, for the case $a=1$ only, there is also one partial multiplicity equal to $\beta$. Taking into account that
$\sum_{a\mid d/p^v}\varphi(a)=d/p^v$, overall we obtain $d-2$ partial multiplicities equal to $\beta-v$ and $1$ partial multiplicity equal to $\beta$. Finally, if $v=0<\beta$, when $a=1$ there is no corresponding factor in $F$; and for each $a>1$, $A_{p,2a}=\{0\}$ and $B_{p,2a}=\{1,\ldots,\beta\}$ yield $1$ partial multiplicity equal to $\beta$, and thus there are overall $d-1$ partial multiplicities equal to $\beta$ after summing over $a$.
\end{itemize}
At every prime $p$, the positive partial multiplicities obtained above are precisely the positive entries of the list consisting of $d-2$ copies of $\nu_p(n/d)$ and $1$ copy of $\nu_p(nm^2/d^2)$. This concludes the proof of \eqref{eq:Pm2}.
\end{proof}

\begin{corollary}\label{cor:Km}
Let $K_m$ and $k$ be as in Proposition \ref{prop:hyperbolic}. For $n\geq1$, define $d:=\gcd(m,n)$ and $e:=\gcd(k,n)$. Then $H_1(\Sigma_n(K_m))$ is isomorphic to
\begin{equation}\label{eq:hyperbolic-all-covers}
 \begin{cases}
 (\Z/2\Z)^{e-1}\oplus(\Z/4\Z)^{d-1},
       &\text{if} \ n \ \text{is odd},\\[4pt]
 (\Z/2\Z)^{2(e-1)}\oplus\Z/m^2\Z,
       &\text{if}\ n  \equiv2\bmod4,\ d=1,\\[4pt]
 \Z^{d-1}\oplus(\Z/2\Z)^{2(e-1)}\oplus(\Z/(n/d)\Z)^{d-2}
             \oplus\Z/\left(nm^2/d^2\right)\Z,
       &\text{if} \ n\equiv2\bmod4,\ d>1,\\[4pt]
 \Z^{2(e-1)}\oplus\Z/(k/e)\Z\oplus\Z/(m^2k/e)\Z,
       &\text{if} \ n\equiv0\bmod4,\ d=1,\\[4pt]
 \Z^{d+2e-3}\oplus(\Z/(k/e)\Z)^2\oplus(\Z/(n/d)\Z)^{d-2}
             \oplus\Z/\left(nm^2/d^2\right)\Z,
       &\text{if} \ n\equiv0\bmod4,\ d>1.
 \end{cases}
\end{equation}
\end{corollary}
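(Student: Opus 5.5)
The plan is to split $\Delta_{K_m}=P_m^2\,Q_k$, where $P_m=(t^m+1)/(t+1)$ and $Q_k:=(t^{2k}+1)/(t^2+1)=\prod_{b\mid k,\,b>1}\Phi_{4b}$. We then use \eqref{eq:cchomology} to reduce to $\operatorname{coker}\Delta_{K_m}(C_{t^n-1})$.

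The first step is to decompose the matrix. I would try to separate the two factors via \cite[Theorem 6.1]{NoferiniWilliamsBrieskorn}, which needs $\operatorname{Res}(P_m^2,t^n-1)$ and $\operatorname{Res}(Q_k,t^n-1)$ to be coprime. This fails in general, because the gcd with $g$ produces zero resultants and the prime $2$ may be shared. So instead I would work locally with Theorem \ref{thm:global}. At each prime $p$ the blocks are indexed by coprime-to-$p$ parts. Since $m$ and $k$ are coprime odd numbers (indeed $\gcd(m,(m\pm1)/2)=1$), the cyclotomic indices $2a$ with $a\mid m$ and $4b$ with $b\mid k$ have distinct odd parts, except for the part $1$ when $a=1$ or $b=1$, and those indices are excluded. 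Hence at an odd prime $p$ every coprime-to-$p$ part involves either only factors of $P_m^2$ or only factors of $Q_k$. Consequently the local Smith form of $\Delta_{K_m}(C_g)$ is, block by block, that of $P_m^2(C_g)$ or of $Q_k(C_g)$. The exception is blocks meeting neither, which contribute units.

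At $p=2$, odd parts $a\mid m$ and $b\mid k$ again differ unless both equal $1$, and $a=b=1$ is excluded. So the same separation holds, and we get $H_1\cong\operatorname{coker}P_m(C_g)^2\oplus\operatorname{coker}Q_k(C_g)$ after reassembling with Lemma \ref{lem:reordersmith}. There is one caveat in the reassembly: one must note that $\operatorname{coker}$ of each factor, with the other factor set to unimodular blocks, coincides with the separately computed modules. This holds because blocks for parts not involving a factor of $f$ give $p^0$ by Theorem \ref{thm:closed}, since $\alpha_b$, $q_b$ and $r_b$ all vanish.

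The second step computes the two pieces. For $P_m^2$ we quote Lemma \ref{lem:Pm2}. For $Q_k$, which is squarefree, Corollary \ref{thm:originale} applies directly, and there are three subcases.
\begin{itemize}
\item If $n$ is odd, then $\gcd(Q_k,g)=1$. At $p=2$, each odd $b\mid e$ with $b>1$ has $B_{2,b}=\{0\}$ and $n_{2,b}=1$ (from $\Phi_{4b}$), which gives $(\Z/2\Z)^{e-1}$. Odd primes contribute nothing, since for them $n_{p,a}$ would require $4b$ of the form $ap^h$ with $h>\nu_p(n)$, and every such $b$ has already been absorbed.
\item If $n\equiv2\bmod4$, then $B_{2,b}=\{0,1\}$ and $\mu=2\varphi(b)$, which gives $(\Z/2\Z)^{2(e-1)}$.
\item If $4\mid n$, then $\gcd(Q_k,g)=Q_e$ contributes $\Z^{2(e-1)}$, and the torsion follows as in Corollary \ref{cor:typeZodd}. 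At an odd $p$, the coprime-to-$p$ part $4$ gives $\nu_p(k/e)$. The parts $4b'$ are nested, so by the containment argument of Corollary \ref{cor:typeZodd} they give nothing. The result is $\Z/(k/e)\Z$.
\end{itemize}

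The third step is to combine the pieces with Lemma \ref{lem:Pm2}, case by case. One must merge the cyclic summands correctly: for example, when $4\mid n$ and $d=1$, $\Z/m^2\Z\oplus\Z/(k/e)\Z$ must be rewritten to match the stated form, which uses coprimality of $m$ and $k$. Likewise the split into $(\Z/(k/e)\Z)^2$ versus one copy should be checked against the displayed formula. I expect the main obstacle to be this bookkeeping, namely confirming that the $Q_k$ torsion at $4\mid n$ has exactly the claimed multiplicity, possibly two copies when $d>1$. To settle it, I would carefully recompute the blocks $D_{p,4}$ and $D_{p,2}$ in Theorem \ref{thm:closed}, including $\delta_b$, rather than relying on the corollary.
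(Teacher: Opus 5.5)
Your Step 1 is sound, but it is a different route from the paper's. The paper splits globally: $\gcd(m,k)=1$ makes $\operatorname{Res}(P_m^2,Q_k)$ a unit, so the Swap Theorem together with Theorem \ref{lem:bello} gives $H_1(\Sigma_n(K_m))\cong\operatorname{coker}P_m(C_{t^n-1})^2\oplus\operatorname{coker}Q_k(C_{t^n-1})$ at once. You reach the same splitting prime by prime from the blocks of Theorem \ref{thm:global}, which costs more bookkeeping. Two small corrections there:
\begin{itemize}
\item At an odd $p$, the coprime-to-$p$ parts $2a'$ and $4b'$ differ because their $2$-adic valuations differ. Distinct odd parts is the relevant fact only at $p=2$.
\item A block with $A_{p,a}=\varnothing$ is trivial because $\alpha_b=\delta_b$ and $q_b=r_b=0$, not because $\alpha_b=0$.
\end{itemize}

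The genuine error is your value for the torsion of $\operatorname{coker}Q_k(C_{t^n-1})$ when $4\mid n$. Take an odd $p$ with $\nu_p(k)>\nu_p(n)$. The part $a=4$ has $B_{p,4}=\{0\}$, so in Corollary \ref{thm:originale} you get $\mu_{p,4}=\varphi(4)=2$ and $n_{p,4}=\nu_p(k/e)$; the parts $4b'$ with $b'>1$ contribute nothing. The torsion is therefore $(\Z/(k/e)\Z)^2$ for every $d$, since the $Q_k$ piece does not involve $m$. It is not $\Z/(k/e)\Z$, and it is not ``possibly two copies when $d>1$''.

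With a single copy your answer is wrong in both cases with $4\mid n$. For example, take $m=5$, $k=3$, $n=4$. Then $\Z[t]/\langle\Phi_{12},t^4-1\rangle=\Z[t]/\langle t^2-2,3\rangle\cong(\Z/3\Z)^2$, so $H_1\cong\Z/3\Z\oplus\Z/75\Z$. Your merge $\Z/m^2\Z\oplus\Z/(k/e)\Z\cong\Z/(m^2k/e)\Z$ instead gives $\Z/75\Z$. The second copy is exactly the extra $\Z/(k/e)\Z$ in the $d=1$ formula, where the other copy merges with $\Z/m^2\Z$ by coprimality. It is also the square in the $d>1$ formula.

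The paper avoids this bookkeeping with the Composition Theorem. Since $Q_k(t)=P_k(t^2)$ and $t^n-1=(t^{n/2}-1)\circ t^2$ for even $n$, one has $\operatorname{coker}Q_k(C_{t^n-1})\cong(\operatorname{coker}P_k(C_{t^{n/2}-1}))^2$. This produces all the doublings $(\Z/2\Z)^{2(e-1)}$, $\Z^{2(e-1)}$ and $(\Z/(k/e)\Z)^2$ automatically.
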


\begin{proof}
Let $Q_k(t):=P_k(t^2)$. Since $\gcd(m,k)=1$, we have $\lvert\operatorname{Res}(P_m^2,Q_k)\rvert=1$ by the classic resultant formulae in \cite{Apostol70}. By the Swap Theorem, Theorem \ref{lem:bello}, and
\eqref{eq:cchomology}, we see that $H_1(\Sigma_n(K_m))$ is isomorphic to $\operatorname{coker} P_m(C_{t^n-1})^2 \oplus \operatorname{coker} Q_k(C_{t^n-1})$. Lemma \ref{lem:Pm2} yields the first cokernel; for the second, when $n$ is odd Corollary \ref{thm:originale} yields $(\Z/2\Z)^{e-1}$. If $n$ is even, we invoke the
Composition Theorem \cite[Theorem 3.13]{NoferiniWilliamsApplications} to obtain 
\[\operatorname{coker} Q_k(C_{t^n-1}) \cong (\operatorname{coker}P_k(C_{t^{n/2}-1}))^2.\]
Observing that $\gcd(k,n/2)=e$ and $\gcd(m,k/e)=1$, we obtain the formulae for the $4$ subcases corresponding to even $n$ in \eqref{eq:hyperbolic-all-covers} by applying Corollary \ref{thm:originale} and combining the result with Lemma \ref{lem:Pm2}.
\end{proof}

Note that the isomorphism described by \eqref{eq:hyperbolic-all-covers} does not generally yield the invariant factors of the homology group, because the divisibility property is not necessarily satisfied. One can nevertheless easily recover the invariant factors using Lemma \ref{lem:reordersmith}.

\begin{example}
For $n=2$, Corollary \ref{cor:Km} gives $H_1(\Sigma_2(K_m))\cong\Z/m^2\Z$. This specialization coincides with the analogous specialization of \cite[Table 1 and Theorem 4.17]{DRS15}. For $n=2m$, we obtain instead
\[ H_1(\Sigma_{2m}(K_m)) \cong\Z^{m-1}\oplus(\Z/2\Z)^{m-2}\oplus\Z/(2m)\Z.\]
For this family, Corollary \ref{cor:Km} yields the full first homology, thus generalizing \cite[Theorem 3.4]{SilverWilliamsTorsion} that determines the orders of the torsion subgroups.
 
 For another example, if we set $m=15$ and $n=90$ then $\Delta_{K_{15}}$ and $t^{90}-1$ have the $3$ common cyclotomic factors $\Phi_6$, $\Phi_{10}$, and $\Phi_{30}$, each of multiplicity $2$ in $\Delta_{K_{15}}$, and
\[H_1(\Sigma_{90}(K_{15}))\cong\Z^{14}\oplus(\Z/6\Z)^{13}\oplus\Z/90\Z.\]
\end{example}

Another family of hyperbolic knots that our results allow us to analyze are the  pretzel knots $P(2,m,-m)$, where the parameters count signed half-twists in the three twist regions \cite[Introduction]{Belousov2026}. Moreover, the tunnel number of these knots is $1$ \cite[Section 5]{GMM05}.
\begin{corollary}\label{cor:pretzel}
Let $m\geq3$ be odd, $n\geq1$, and $d:=\gcd(m,n)$. The pretzel knot $J_m:=P(2,m,-m)$ is hyperbolic and has Alexander polynomial $\left(\frac{t^m+1}{t+1}\right)^2$. Moreover, $H_1(\Sigma_n(J_m))$ is isomorphic to
\begin{equation}\label{eq:pretzel}
 \begin{cases}
 (\Z/4\Z)^{d-1},&\text{if} \ n \ \text{is odd},\\
 \Z/m^2\Z,&\text{if}\ n \ \text{is even and} \ d=1,\\
 \Z^{d-1}\oplus(\Z/(n/d)\Z)^{d-2}
       \oplus\Z/\left(nm^2/d^2\right)\Z,
       &\text{if} \ n \ \text{is even and} \ d>1.
 \end{cases}
\end{equation}
\end{corollary}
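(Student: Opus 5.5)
The statement has three ingredients: hyperbolicity, the Alexander polynomial, and the homology of the cyclic branched covers. Hyperbolicity and tunnel number are not proved from scratch. Tunnel number $1$ was already recorded just before the corollary via \cite[Section 5]{GMM05}. For hyperbolicity I will cite \cite{Belousov2026}, or the standard classification of hyperbolic pretzel knots: a three-strand pretzel $P(p_1,p_2,p_3)$ with $|p_i|\geq 2$ is Montesinos and not a torus knot, hence hyperbolic. It would also suffice that $J_m$ is a two-bridge or Montesinos knot with three rational tangles, which is not torus.

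\textbf{Alexander polynomial.} I will compute it from a Seifert matrix. Since $2$ is even and $m,-m$ are odd, $P(2,m,-m)$ has a genus-one Seifert surface built from the two bands with $m$ and $-m$ half twists. After checking orientations, the Seifert matrix is
\[
V=\begin{bmatrix}\frac{m+1}{2} & \ast\\ \ast & -\frac{m-1}{2}\end{bmatrix}
\quad\text{or}\quad
V=\begin{bmatrix}\frac{m+1}{2} & \frac{m+1}{2}\\ \frac{m-1}{2} & \frac{m+1}{2}\end{bmatrix},
\]
depending on conventions. That determinant calculation gives a quadratic, which is not $P_m^2$ when $m>3$. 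So the genus-one surface must be wrong, and I instead use the standard fact that $P(2,m,-m)$ is the connected-sum-like "ribbon" knot with $\Delta=P_m(t)P_m(t^{-1})$ up to units; here $P_m(t)=(t^m+1)/(t+1)$ as in Lemma~\ref{lem:Pm2}. Since $P_m$ is palindromic, this equals $P_m^2$ up to a power of $t$.

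Concretely, I will take the well-known formula for the Alexander polynomial of an even pretzel $P(2q,p_1,p_2)$ (e.g.\ via the Conway potential of the Montesinos tangle sum), specialize it to $q=1$, $p_1=m$, $p_2=-m$, and simplify. This is the step I expect to be the main obstacle, since it depends on sign conventions. If the literature formula is awkward, my fallback is to compute the Conway polynomial by the skein relation on the twist regions, inducting on $m$ in steps of $2$: a crossing change in the $m$-region reduces $m$ by $2$, which yields a linear recursion whose solution is $P_m^2$.

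\textbf{Homology.} Once $\Delta_{J_m}=P_m^2$ up to units and the Alexander module is known to be cyclic (tunnel number $1$ together with \cite[Corollary 1.2]{KY10}), \eqref{eq:cchomology} gives
\[
H_1(\Sigma_n(J_m))\cong\operatorname{coker}\bigl(P_m(C_{t^n-1})^2\bigr).
\]
Lemma~\ref{lem:Pm2} then yields \eqref{eq:pretzel} verbatim. Multiplying by a signed power of $t$ does not change the cokernel, because $C_{t^n-1}$ is invertible over $\Z$.
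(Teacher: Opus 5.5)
Your homology step matches the paper's proof: tunnel number one gives a cyclic Alexander module, so \eqref{eq:cchomology} together with Lemma~\ref{lem:Pm2} finishes. For the Alexander polynomial, the paper also only specializes a literature formula, namely \cite[Theorem 1(1)]{Belousov2026}, so your ``concrete'' plan is essentially its route. The heuristics around it prove nothing and should be dropped:
\begin{itemize}
\item A genus-one surface cannot exist for any $m\geq3$, since $\deg P_m^2=2m-2$ forces genus at least $m-1$.
\item Ribbonness gives, via Fox--Milnor, only $\Delta=f(t)f(t^{-1})$ for \emph{some} $f$. It does not give $f=P_m$.
\item Your skein fallback is misdescribed. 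A crossing change in the $m$-region turns $P(2,m,-m)$ into $P(2,m-2,-m)$, which leaves your one-parameter family. An induction would have to run over $P(2,m_1,m_2)$ and also track the two-component links produced by smoothing.
\end{itemize}

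The genuine gap is hyperbolicity. Your general claim that a three-strand pretzel with all $|p_i|\geq2$ is not a torus knot is false: $P(-2,3,3)=T(3,4)$ and $P(-2,3,5)=T(3,5)$. These are exactly the exceptions among non-hyperbolic Montesinos knots with three rational tangles. So ``Montesinos with three tangles and not torus, hence hyperbolic'' is a valid criterion, but you never establish the ``not torus'' part for $J_m$. The two-bridge alternative is also unavailable, since a Montesinos knot with three rational tangles has bridge number three. The paper uses \cite{Belousov2026} only for the Alexander polynomial, not for hyperbolicity.

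The paper closes this gap with \cite[Remark 1]{IJ09}, which says $J_m$ is hyperbolic, a torus knot, or the unknot. It then excludes the last two because $P_m^2$ is nonconstant and not squarefree, whereas every torus knot has the squarefree Alexander polynomial $\frac{(t^{rs}-1)(t-1)}{(t^r-1)(t^s-1)}$. Within your Montesinos route, you could instead observe that $(2,m,-m)$, whose odd entries have opposite signs, is neither $(-2,3,3)$ nor $(-2,3,5)$ nor a mirror of these. Some such argument is needed; as written, your hyperbolicity claim is unsupported.
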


\begin{proof}
Specializing \cite[Theorem 1(1)]{Belousov2026} to the triple $(2,m,-m)$ we obtain $\Delta_{J_m}=P_m^2$ where $P_m$ is defined as in Lemma \ref{lem:Pm2}. Moreover, by \cite[Remark 1]{IJ09},  $J_m$ is either a hyperbolic knot, a torus knot, or the unknot. However, the last two possibilities are impossible, because $P_m^2$ is nonconstant and not squarefree. Finally, \eqref{eq:pretzel} immediately follows from \eqref{eq:cchomology} and Lemma \ref{lem:Pm2}.
\end{proof}

For $m=3$, the homology \eqref{eq:pretzel} can also be obtained by (a special case of) Mednykh's arguments, given in \cite[Sections 2--4 and 6]{Med21}. Even though those arguments are stated for two-bridge knots, they are still applicable because the Alexander module is cyclic and $\Delta_{J_3}=\Phi_6^2$.

\subsubsection{Knots in lens spaces}\label{sec:kls}

Following \cite{Cattabriga06}, we now consider $(1,1)$-knots in $3$-dimensional lens spaces, which we denote by $L(P,Q)$; they are a parametric family of $3$-dimensional closed manifolds that includes the sphere as the special case $L(1,0)=S^3$. A cyclic branched cover of degree $n$ is called strongly cyclic if its branching index along the knot is $n$ (see \cite[Section 2]{Cattabriga06} for more details). For a $(1,1)$-knot in $L(P,Q)$, such a cover exists and is unique up to equivalence whenever $\gcd(n,P)=1$; see the same section.

Let $K\subset L(P,Q)$ be a $(1,1)$ knot, $P\geq1$, and let $d$ be the order of the torsion subgroup of $H_1(L(P,Q)\setminus K)$. Suppose that its Alexander polynomial, up to signed powers of $t$, is a product of cyclotomic polynomials $\Delta_K$. For $n>1$ coprime to $P$, let $\Sigma_n(K)$ be the $n$-fold strongly cyclic branched cover. By the proofs of \cite[Theorem 4.1 and Corollary 4.1 ]{Cattabriga06}, we have that $d \mid P$ and
\[ H_1(\Sigma_n(K))\cong \operatorname{coker}f(C_{t^n-1}),\qquad f(t):=\Delta_K(t)\frac{t^{P/d}-1}{t-1}. \]
Thus $f$ is a product of cyclotomic polynomials.

For instance, Cattabriga \cite[Example 4.2]{Cattabriga06} studies the $(1,1)$-knots $K_m=K(1,m-2,0,1)\subset L(m-2,1)$. For odd $m\geq3$, for these knots $d=1$ and
\[ \Delta_{K_m}(t)=\frac{t^m+1}{t+1},\qquad f(t)=\frac{(t^m+1)(t^{m-2}-1)}{t^2-1}.\]
Corollary \ref{cor:typeZodd} gives the homology of $\Sigma_n(K_m)$ for every $n>1$ coprime to $m-2$.

\begin{corollary}\label{cor:cattacovers}
Let $m\geq3$ be odd and $K_m=K(1,m-2,0,1)\subset L(m-2,1)$. For every $n>1$ such that $\gcd(n,m-2)=1$, let $\Sigma_n(K_m)$ be its $n$-fold strongly cyclic branched cover, and let $h:=\gcd(m,n)$. Then
\begin{equation}\label{eq:cattacovers}
H_1(\Sigma_n(K_m))\cong
\begin{cases}
\Z/(m-2)\Z, &\text{if} \ n \ \text{is odd and} \ h=1,\\
(\Z/2\Z)^{h-2}\oplus\Z/(2(m-2))\Z,
 &\text{if} \ n \ \text{is odd and} \ h>1,\\
\Z^{h-1}\oplus\Z/(m(m-2)/h)\Z, &\text{if} \ n \ \text{is even}.
\end{cases}
\end{equation}
\end{corollary}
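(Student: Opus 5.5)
The plan is a direct specialization of Corollary \ref{cor:typeZodd}, followed by Chinese Remainder bookkeeping. By the discussion preceding the statement (the proofs of \cite[Theorem 4.1 and Corollary 4.1]{Cattabriga06}, with torsion order $d=1$ and $P=m-2$), we have $H_1(\Sigma_n(K_m))\cong\operatorname{coker}f(C_{t^n-1})$ with
\[ f(t)=\frac{(t^m+1)(t^{m-2}-1)}{t^2-1}. \]
This is exactly the representer polynomial of Corollary \ref{cor:typeZodd} with parameters $c:=m$ and $d:=m-2$. Both are positive odd integers when $m\geq3$ is odd; for $m=3$ we get $d=1$, which is allowed.

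Next I compute the gcd parameters appearing in \eqref{eq:typeZodd}.
\begin{itemize}
\item $\delta=\gcd(m,n)=h$, which is the $h$ of the present statement.
\item $\varepsilon=\gcd(m-2,n)=1$, by the hypothesis $\gcd(n,m-2)=1$.
\item The quantity called $h$ in Corollary \ref{cor:typeZodd}, namely $\gcd(m,m-2,n)$, equals $1$. Indeed $\gcd(m,m-2)=\gcd(m,2)=1$ since $m$ is odd.
\end{itemize}
Substituting into \eqref{eq:typeZodd} gives two cases:
\[
\operatorname{coker}f(C_{t^n-1})\cong
\begin{cases}
(\Z/2\Z)^{h-1}\oplus\Z/(m-2)\Z, &\text{if $n$ is odd},\\
\Z^{h-1}\oplus\Z/(m/h)\Z\oplus\Z/(m-2)\Z, &\text{if $n$ is even}.
\end{cases}
\]

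It remains to repackage these groups in the form \eqref{eq:cattacovers}.

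For odd $n$ with $h=1$ the $2$-torsion vanishes, leaving $\Z/(m-2)\Z$. For odd $n$ with $h>1$, I split off one copy of $\Z/2\Z$. Since $m-2$ is odd, the Chinese Remainder Theorem gives $\Z/2\Z\oplus\Z/(m-2)\Z\cong\Z/(2(m-2))\Z$, so the group becomes $(\Z/2\Z)^{h-2}\oplus\Z/(2(m-2))\Z$.

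For even $n$, note that $m/h$ divides $m$ and $\gcd(m,m-2)=1$, so $\gcd(m/h,m-2)=1$. The Chinese Remainder Theorem then merges the two cyclic summands into $\Z/(m(m-2)/h)\Z$, which gives the third case.

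There is no real obstacle here. The only points needing care are the clash of the symbol $h$ between Corollary \ref{cor:typeZodd} and the present statement, and confirming that the coprimality conditions hold, so that the Chinese Remainder Theorem combinations are legitimate.
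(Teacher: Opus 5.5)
Your proposal is correct and matches the paper's proof essentially step for step. You specialize Corollary~\ref{cor:typeZodd} with $c=m$ and $d=m-2$, so that $\varepsilon=1$ and $\gcd(m,m-2,n)=1$, and then merge cyclic summands by the Chinese Remainder Theorem, using that $m-2$ is odd and that $\gcd(m/h,m-2)=1$.
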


\begin{proof}
We apply Corollary \ref{cor:typeZodd} with $c=m$ and $d=m-2$. Since $\gcd(n,m-2)=1$ and $\gcd(m,m-2)=1$, we obtain
$(\Z/2\Z)^{h-1}\oplus\Z/(m-2)\Z$ when $n$ is odd, and $\Z^{h-1}\oplus\Z/(m/h)\Z\oplus\Z/(m-2)\Z$ when $n$ is even.
To obtain the case of $n$ odd and $h>1$ in \eqref{eq:cattacovers}, it suffices to note that $m-2$ is odd and combine one copy of $\Z/2 \Z$ with $\Z/(m-2) \Z$. A similar trick yields the case of even $n$, noting also that $m-2$ is coprime to $m/h$.
\end{proof}

\begin{example}
Two interesting special cases of Corollary \ref{cor:cattacovers} are $n=m$ and $n=2m$, for which respectively
\[ H_1(\Sigma_m(K_m))\cong(\Z/2\Z)^{m-2}\oplus\Z/(2(m-2))\Z \quad \mathrm{and} \quad H_1(\Sigma_{2m}(K_m))
\cong\Z^{m-1}\oplus\Z/(m-2)\Z.\]
In particular, $\Sigma_m(K_m)$ is the Neuwirth manifold considered in \cite[Example 4.2]{Cattabriga06}. Its homology is also the specialization $(\alpha,\beta,\ell)=(2,1,1)$ of \cite[Corollary 4.6]{NoferiniWilliamsApplications}.
\end{example}

\subsubsection{Graphs associated with circulant matrices}

Letting $C:=C_{t^n-1}$ and $n,v\geq1$, the matrices in the family $H_{n,v}:=(I+C^{v+1})(I+C+\cdots+C^{v-1})$ are circulant matrices related to adjacency matrices of certain graphs. Williams \cite[Sections 5--6]{GW14} determines their  Smith form in several cases. Applying Corollary \ref{cor:typeZeven} with $a=v+1$ and $b=v$, we obtain the following formula that covers the whole family.

\begin{corollary}\label{cor:cyclepowers}
Let $n,v\geq1$, and define $d:=\gcd(n,v+1)$ and $e:=\gcd(n,v)$. Then, defining $H_{n,v}$ as above,
\[ \operatorname{coker}H_{n,v}\cong
\begin{cases}
\Z^{e-1}\oplus(\Z/2\Z)^{d-1}\oplus\Z/(2v/e)\Z,
 &\text{if} \ \nu_2(n)\leq\nu_2(v+1),\\
\Z^{d+e-1}\oplus\Z/(v/e)\Z,
 &\text{if} \ v \ \text{is odd and} \ \nu_2(n)>\nu_2(v+1),\\
\Z^{d+e-2}\oplus\Z/(2nv/(de^2))\Z,
 &\text{if} \ n,v \ \text{are even}.
\end{cases}\]
\end{corollary}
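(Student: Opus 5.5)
The plan is to identify $H_{n,v}$ as $f(C_{t^n-1})$ for a polynomial of the form treated in Corollary \ref{cor:typeZeven}, and then to specialize that corollary. Since $C=C_{t^n-1}$, we have $I+C+\cdots+C^{v-1}=\bigl((t^v-1)/(t-1)\bigr)(C)$ and $I+C^{v+1}=(t^{v+1}+1)(C)$. Polynomial evaluation at $C$ is a ring homomorphism, so
\[ H_{n,v}=f(C_{t^n-1}),\qquad f(t):=\frac{(t^{v+1}+1)(t^v-1)}{t-1}.\]
This is exactly the polynomial of Corollary \ref{cor:typeZeven} with $a=v+1\geq1$ and $b=v\geq1$, so that corollary applies directly.

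Next I specialize its parameters. Its $d=\gcd(a,n)=\gcd(n,v+1)$ and its $e=\gcd(b,n)=\gcd(n,v)$ agree with the present $d$ and $e$. Since $\gcd(v,v+1)=1$, we get $h=\gcd(v+1,v,n)=1$ and hence $L=n/(de)$. I then match the three cases of \eqref{eq:typeZ} with those of the statement, using that exactly one of $v,v+1$ is odd.
\begin{enumerate}
\item If $\nu_2(n)\leq\nu_2(v+1)$, the first case of \eqref{eq:typeZ} gives $\Z^{e-1}\oplus(\Z/2\Z)^{d-1}\oplus\Z/(2v/e)\Z$, because $d-h=d-1$.
\item If $\nu_2(n)>\nu_2(v+1)$ and $v$ is odd, then $\nu_2(b)=0\leq\nu_2(a)<\nu_2(n)$. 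The second case of \eqref{eq:typeZ} then gives $\Z^{d+e-1}\oplus\Z/(v/e)\Z$.
\item If $v$ is even, then $\nu_2(a)=0<\nu_2(b)$. When $n$ is odd, we are back in the first case above. When $n$ is even, $0<\nu_2(n)$ as well, so the third case of \eqref{eq:typeZ} applies. It gives $\Z^{d+e-h-1}=\Z^{d+e-2}$. The term $(\Z/L\Z)^{h-1}$ vanishes. The remaining summand is $\Z/(2bL/e)\Z$, where $2bL/e=2vn/(de^2)$.
\end{enumerate}

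The only step needing care is checking that these three conditions partition all pairs $(n,v)$ and align with the case hypotheses of Corollary \ref{cor:typeZeven}. If $v$ is even, then $\nu_2(v+1)=0$, so the first case of the statement occurs precisely when $n$ is odd. Otherwise $n$ and $v$ are both even, which is the third case. If $v$ is odd, the first two cases are split by comparing $\nu_2(n)$ with $\nu_2(v+1)$. This is all routine, so I expect no real obstacle beyond this bookkeeping.
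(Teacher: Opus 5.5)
Your proposal is correct and takes the same approach as the paper: write $H_{n,v}=f(C_{t^n-1})$ with $f(t)=(t^{v+1}+1)(t^v-1)/(t-1)$, apply Corollary~\ref{cor:typeZeven} with $a=v+1$, $b=v$, and use $\gcd(v,v+1)=1$ to get $h=1$ and $L=n/(de)$. Your explicit check that the three case conditions partition all pairs $(n,v)$ and match the hypotheses of Corollary~\ref{cor:typeZeven} is bookkeeping the paper leaves to the reader, and it is accurate.
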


\begin{proof}
In the notation of Corollary \ref{cor:typeZeven}, we have $h=1$ and $L=n/(de)$ because $\gcd(v,v+1)=1$. The statement follows.
\end{proof}

Recall that the $v$-th power of a cycle graph joins $2$ distinct vertices when their distance along the cycle is at most $v$.

\begin{example}
When $1 \leq v < n/2$, $C^{-v}H_{n,v}$ is the adjacency matrix of the $v$-th power of the cycle graph with $n$ vertices \cite[Section 2]{GW14}. Since $C^{-v}$ is unimodular, the Smith form of the latter adjacency matrix is the same as  that of $H_{n,v}$.

For example, when $n=20$ and $v=4$, we obtain from Corollary \ref{cor:cyclepowers} (and the previous observation) that the adjacency matrix of the $4$-th power of the cycle with $20$ vertices has Smith form
$I_{12}\oplus2\oplus0_{7\times7}$.
\end{example}

For $n\geq5$, let us now consider the graph $G_n$ obtained as the Cartesian product of the complete graph with two vertices and the square (i.e., $2$nd power) of the cycle with $n$ vertices. The graph $G_n$ is regular of degree $5$, and hence its Laplacian is $L_n:=5I_{2n}-A_n$, where $A_n$ is the adjacency matrix of $G_n$. In \cite[Section 5.3]{KMM25}, Kwon, Mednykh and Mednykh observe that $\det(L_n-6I_{2n})$ is periodic in $n$, with period $6$. Corollary \ref{cor:graphL} goes beyond this observation by giving the full list of the invariant factors.

\begin{corollary}\label{cor:graphL}
Let $n\geq5$, and let $L_n$ be the graph Laplacian of the graph $G_n$ defined as above. Then, the Smith form of $L_n-6 I_{2n}$ is
\[I_{2n-r_n-1}\oplus d_n\oplus0_{r_n\times r_n},\]
where $r_n:=\gcd(n,4)+\gcd(n,6)-\gcd(n,2)-1$, and $d_n:=24/\gcd(n,3)$ if $n$ is odd, while $d_n:=24n/\gcd(n,12)^2$ if $n$ is even.
\end{corollary}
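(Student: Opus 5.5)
The plan is to reduce $L_n-6I_{2n}$ to a single integer circulant whose representer polynomial is a product of cyclotomic polynomials, and then read off its Smith form from Theorem \ref{thm:global}. Let $C:=C_{t^n-1}$. The adjacency matrix of the square of the $n$-cycle is $B:=C+C^{-1}+C^2+C^{-2}$. Since $G_n$ is the Cartesian product of $K_2$ with this graph, ordering the vertices by the two copies gives
\[ A_n=\begin{bmatrix}B&I\\I&B\end{bmatrix}. \]
Hence $L_n-6I_{2n}=-(I_{2n}+A_n)$. Block row and column operations (subtract $(I+B)$ times the second block row from the first, then swap) give
\[ \begin{bmatrix}I+B&I\\I&I+B\end{bmatrix}\sim I_n\oplus\bigl((I+B)^2-I\bigr)=I_n\oplus B(B+2I). \]
Multiplying by the unimodular $C^4$ and factoring, I expect to obtain
\[ C^2B=(I+C)(I+C^3),\qquad C^2(B+2I)=(I+C^2)(I+C+C^2). \]
So $L_n-6I_{2n}\sim I_n\oplus f(C_{t^n-1})$ with $f=\Phi_2^2\Phi_3\Phi_4\Phi_6$.

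Since $f$ is not squarefree ($\Phi_2^2$), Corollary \ref{thm:originale} fails when $n$ is even. However, $g=t^n-1$ is squarefree, so Theorem \ref{thm:global} applies for every $n$. First, $\gcd(f,g)$ is the product of $\Phi_2,\Phi_3,\Phi_4,\Phi_6$ over those indices dividing $n$. Its degree is $[2\mid n]+2[3\mid n]+2[4\mid n]+2[6\mid n]$, and a direct check shows this equals $r_n=(\gcd(n,4)-1)+(\gcd(n,6)-\gcd(n,2))$. This gives the zero block. Next, all cyclotomic indices in $f$ involve only the primes $2$ and $3$. By Theorem \ref{thm:global}, and as in Example \ref{ex:parameters}, other primes contribute no elementary divisors: a prime $p\ge5$ only meets coprime-to-$p$ parts where $A_{p,a}=\varnothing$.

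The core of the proof is then a case analysis at $p=2$ and $p=3$ to compute the blocks $D_{p,a}$. I will show that each prime contributes exactly one positive partial multiplicity (or none), all in a single block. At $p=3$ the relevant coprime-to-$3$ parts are $a=1$ (the $\Phi_1$, $\Phi_3$ chain) and $a=2$ (the $\Phi_2$, $\Phi_6$ chain). Here the multiplicity $k(2)$ is $2$ or $1$ according to the parity of $n$, and $\Phi_4$ is irrelevant. Using the $\delta_b$ and $\alpha_b$ bookkeeping of Theorem \ref{thm:closed}, this should give a valuation $1$ when $3\nmid n$ and $\nu_3(n)-1$ otherwise, matching $\nu_3(24/\gcd(n,3))$ and $\nu_3(24n/\gcd(n,12)^2)$. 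At $p=2$ the relevant parts are $a=1$ (the $\Phi_1,\Phi_2,\Phi_4$ chain, with $k_1$ equal to $2$ or $1$) and $a=3$ (the $\Phi_3,\Phi_6$ chain). I will run Theorem \ref{thm:closed} separately for $\nu_2(n)=0$, $1$, and $\ge2$, and check that the result is $3$ for odd $n$ and $\nu_2(24n/\gcd(n,12)^2)$ otherwise. Finally, Lemma \ref{lem:reordersmith} assembles the single invariant factor $d_n$.

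I expect the main obstacle to be the $p=2$, $a=1$ block when $\nu_2(n)\ge2$. There $F=\Phi_2^{k}$ with $k=1$ (one copy cancels), and $G$ contains $\Phi_1$ together with $\Phi_{2^b}$ for $3\le b\le\nu_2(n)$ but not $\Phi_2$ or $\Phi_4$. The Euclidean divisions \eqref{eq:division} and the $\delta_b$ then need care. I must also verify that the $a=3$ block for $p=2$ contributes nothing when $6\mid n$, so that only one positive partial multiplicity survives at $2$.
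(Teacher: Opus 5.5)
Your reduction to $I_n\oplus f(C_{t^n-1})$ with $f=\Phi_2^2\Phi_3\Phi_4\Phi_6$, the identity $\deg\gcd(f,t^n-1)=r_n$, and the prime-by-prime use of Theorem \ref{thm:global} all match the paper's proof. There is, however, a genuine gap: your claim that primes $p\geq5$ contribute no elementary divisors is false whenever $n$ is even.

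For even $n$ only one copy of $\Phi_2$ cancels. Hence $k(2)=1$ and $A_{p,2}=\{0\}\neq\varnothing$. At the same time $\Phi_{2p^h}\mid G$ for $1\leq h\leq\nu_p(n)$, so $B_{p,2}=\{1,\ldots,\nu_p(n)\}$. Apply Theorem \ref{thm:closed} with $A=\{0\}$, $k_0=1$ and this $B$. At $b=1$ you get $q_1=0$ and $r_1=1$ (because $\varphi(p)>1$), and $\alpha_1=\delta_1=\nu_p(n)-1$. This produces exactly one partial multiplicity equal to $\nu_p(n)$. The blocks with $b\geq2$ are trivial, and so are the blocks for $a\in\{3,4,6\}$.

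This is exactly the source of the factor $\prod_{p\geq5}p^{\nu_p(n)}$ in $d_n=24n/\gcd(n,12)^2$. For example, $d_{10}=60$, the factor $5$ coming from $\lvert\operatorname{Res}(\Phi_2,\Phi_{10})\rvert=\Phi_5(1)=5$. Your argument as written would give $12$. The appeal to Example \ref{ex:parameters} does not transfer: there $g$ also involved only the primes $2$ and $3$, whereas here $g=t^n-1$ involves every prime dividing $n$. The paper records this case explicitly: for $p>3$ the last partial multiplicity is $0$ if $n$ is odd and $\nu_p(n)$ if $n$ is even. For odd $n$ your claim is correct, since $B_{p,a}=\varnothing$ whenever $A_{p,a}\neq\varnothing$.

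A smaller slip concerns your $3$-adic target. The value $\nu_3(n)-1$ for $3\mid n$ is correct only for even $n$. For odd $n$ with $3\mid n$, $\Phi_3$ cancels, so $A_{3,1}=\varnothing$, and no even index divides $n$, so $B_{3,2}=\varnothing$. The $3$-adic valuation is then $0$, not $\nu_3(n)-1$. The correct values are $\max\{1-\nu_3(n),0\}$ for odd $n$ and $\lvert1-\nu_3(n)\rvert$ for even $n$.

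Your $2$-adic analysis is fine as planned. For $\nu_2(n)\geq2$, the sets $A_{2,1}=\{1\}$ and $B_{2,1}=\{0,3,\ldots,\nu_2(n)\}$ give $\delta_0=0$, so $b=0$ yields the single multiplicity $\nu_2(n)-1$. For even $n$ the $a=3$ block is empty, because either $A_{2,3}=\varnothing$ (if $3\mid n$) or $B_{2,3}=\varnothing$ (if $3\nmid n$).
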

\begin{proof}
Let $C:=C_{t^n-1}$ and $S:=I_n+C+C^{-1}+C^2+C^{-2}$. With no loss of generality up to reordering vertices (which corresponds to a permutation similarity and hence an equivalence), we have $6I_{2n}-L_n=\begin{bmatrix}S&I_n\\
I_n&S\end{bmatrix}$.
Having defined
\[ f(t):=(1+t+t^2+t^3)(1+t+t^2+t^3+t^4+t^5) =\Phi_2(t)^2\Phi_3(t)\Phi_4(t)\Phi_6(t),\] 
we then have
\[\begin{bmatrix}I_n&0\\-S&I_n\end{bmatrix}(6I_{2n}-L_n)\begin{bmatrix}0&-I_n\\I_n&S\end{bmatrix}
 = I_n\oplus(S^2-I_n)\sim I_n\oplus f(C).\]
Indeed, $t^4((1+t+t^{-1}+t^2+t^{-2})^2-1)=f(t)$ and hence $S^2-I_n=C^{-4}f(C)$. Moreover, $C$ is a permutation matrix and thus unimodular, and therefore $C^{-4}$ also is.

The degree of $z:=\gcd(f,t^n-1)$ is $r_n$. Applying Theorem \ref{thm:global} to $F:=f/z$ and $G(t):=(t^n-1)/z(t)$ shows that, for each prime $p$, $n-r_n-1$ partial multiplicities of $F(C_G)$ are $0$. Writing $\beta:=\nu_2(n)$ and $\gamma:=\nu_3(n)$, the last partial multiplicity is $3+\beta-2\min\{\beta,2\}$ at $2$; at $3$, it is $\max\{1-\gamma,0\}$ if $n$ is odd and $|1-\gamma|$ if $n$ is even; for $p>3$, it is $0$ if $n$ is odd and $\nu_p(n)$ if $n$ is even. These are precisely the valuations of $d_n$, proving the statement.
\end{proof}

For example, for every $k\geq1$ we obtain $\operatorname{coker}(L_{12k}-6I_{24k})\cong
\Z^7\oplus\Z/(2k)\Z$. Thus the determinants are periodic, although the torsion subgroups of these cokernels have unbounded order.

\subsection{Equivariant \texorpdfstring{$K$}{K}-theory}\label{sec:Ktheory}

Let $N\geq2$, $r\geq1$, and $1\leq a_j<N$ for $j=1,\ldots,r$. Letting $\zeta:=e^{2\pi i/N}$, we consider the following map on $\C^r$:
\[(z_1,\ldots,z_r)\mapsto (\zeta^{a_1}z_1,\ldots,\zeta^{a_r}z_r)\]
It is an automorphism of the unit sphere $S(\C^r)$, and applying it $N$ times gives the identity. Its powers therefore define an action of the cyclic group $C_N:=\Z/(N\Z)$ on $S(\C^r)$. For each $j$, let $\C_{a_j}$ denote the $j$-th copy of $\C$, on which the chosen generator of $C_N$ acts by $z\mapsto\zeta^{a_j}z$. In this notation, $\C^r=\bigoplus_{j=1}^r\mathbb C_{a_j}$.

Following \cite[Sections 1--2]{Segal68}, we recall the definition of the associated $K$-group in terms of equivariant complex vector bundles over $S(\C^r)$. Such a bundle attaches a finite-dimensional complex vector space to each point of the sphere, and locally has the form $U\times\mathbb C^k$, where $U$ is an open subset of the sphere. We require the action of $C_N$ to extend continuously to the bundle, mapping linearly the vector space over each point onto the vector space over the transformed point. The abelian group $K^0_{C_N}(S(\C^r))$ has a generator $[E]$ for each isomorphism class of such bundles, with the relations $[E\oplus F]=[E]+[F]$. The connection with our results comes from the polynomial presentation
\begin{equation}\label{eq:equivariant-k}
K^0_{C_N}(S(\C^r))\cong\Z[t]/\langle f,g\rangle,\qquad f(t):=\prod_{j=1}^r(t^{a_j}-1),\quad g(t):=t^N-1,
\end{equation}
which follows from \cite[pp. 136-141]{Segal68} and \cite[Lemma 1.3.3]{brunerbook}.
Since $f$ is a product of cyclotomic polynomials and $g$ is squarefree, Theorem \ref{thm:global} yields both the free subgroup and the invariant factors of the torsion subgroup of $K^0_{C_N}(S(\C^r))$. This also applies when the action is not free, i.e., there exists a group element other than the identity that fixes a point of $S(\C^r)$. We now consider $N=p^3$, where $p$ is prime, with $u$ of the integers $a_j$ equal to $1$ and the remaining $v=r-u$ equal to $p$. For $u,v\geq1$, division by the gcd gives $G=g/\gcd(f,g)=\Phi_{p^2}\Phi_{p^3}$. Varying $u$ and $v$ then illustrates both cases of \eqref{eq:closed}.

\begin{corollary}\label{cor:illustKtheory}
Let $p$ be prime, $N=p^3$, and $u,v\geq1$. Consider the action of $C_{p^3}$ on $\C^{u+v}$ generated by the matrix
$\zeta I_u\oplus\zeta^p I_v$, where $\zeta:=e^{2\pi i/p^3}$. Define $D:=u+pv-p$, $e_2:=\varphi(p^2)=p(p-1)$, and
$e_3:=\varphi(p^3)=p^2(p-1)$. If $D<e_2$, then
\[K^0_{C_{p^3}}(S(\C^{u+v}))\cong\Z^p\oplus(\Z/p^2\Z)^D.\]
If $D\geq e_2$, write the Euclidean divisions $D=:q_2e_2+r_2$, $0\leq r_2<e_2$, and $D-e_2=:q_3e_3+r_3$, $0\leq r_3<e_3$. Then,
\[K^0_{C_{p^3}}(S(\C^{u+v}))\cong\Z^p\oplus(\Z/p^{q_3}\Z)^{e_3-r_3}\oplus(\Z/p^{q_3+1}\Z)^{r_3}\oplus(\Z/p^{q_2+1}\Z)^{e_2-r_2}\oplus(\Z/p^{q_2+2}\Z)^{r_2}.\]
\end{corollary}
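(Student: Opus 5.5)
The plan is to combine the presentation \eqref{eq:equivariant-k} with Theorem \ref{thm:global}. Since $\Z[t]/\langle f,g\rangle\cong\operatorname{coker}f(C_g)$, it suffices to compute $\SF(f(C_g))$ for
\[
f(t)=(t-1)^u(t^p-1)^v=\Phi_1^{u+v}\Phi_p^{v},\qquad g(t)=t^{p^3}-1=\Phi_1\Phi_p\Phi_{p^2}\Phi_{p^3}.
\]
Since $u,v\geq1$, we have $z=\gcd(f,g)=\Phi_1\Phi_p$, with $\ell=\deg z=p$. This gives the free summand $\Z^p$. We are then left with $F=\Phi_1^{u+v-1}\Phi_p^{v-1}$ and $G=\Phi_{p^2}\Phi_{p^3}$, and $G$ is squarefree, so Theorem \ref{thm:global} applies.

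\textbf{Only the prime $p$ contributes.} Every cyclotomic index appearing in $F$ and $G$ is a power of $p$. For a prime $p'\neq p$, all these indices have coprime-to-$p'$ part of the form $p^j$, and these parts are distinct. Hence $A_{p',a}\cap$ ``relevant'' is empty for each block: each $D_{p',a}$ has $A_{p',a}=\varnothing$, giving $q_b=0$, $r_b=0$ and $\alpha_b=\delta_b$, so the block is an identity. Equivalently, the resultants are powers of $p$, so $F(C_G)$ is unimodular over $\Z_{p'}$.

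\textbf{The local computation at $p$.} We use Theorem \ref{thm:closed} directly with $A=\{0,1\}$, $k_0=u+v-1$, $k_1=v-1$, and $B=\{2,3\}$. If $v=1$, then $1\notin A$, but this changes none of the sums below.
\begin{itemize}
\item[$b=2$:] The left side of \eqref{eq:division} is $(u+v-1)+(p-1)(v-1)=u+pv-p=D$. Hence $D=q_2e_2+r_2$, with $\alpha_2=1$ (from $c=3$) and $\delta_2=1$, since no index of $A$ lies between $2$ and $3$.
\item[$b=3$:] The left side is $\max(0,D-e_2)$, and $\alpha_3=\delta_3=0$.
\end{itemize}

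\textbf{Case $D<e_2$.} Here $q_2=0$ and $r_2=D$, so the $b=2$ block is $I_{e_2-D}\oplus p^2I_D$. For $b=3$ we get $q_3=r_3=0$, so that block is $I_{e_3}$. This yields the torsion $(\Z/p^2\Z)^D$.

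\textbf{Case $D\geq e_2$.} Now $q_2\geq1$, so the $b=2$ block is $p^{q_2+1}I_{e_2-r_2}\oplus p^{q_2+2}I_{r_2}$. For $b=3$ we have $D-e_2=q_3e_3+r_3$. Both branches of \eqref{eq:closed} give $p^{q_3}I_{e_3-r_3}\oplus p^{q_3+1}I_{r_3}$: this is immediate if $q_3\geq1$, and if $q_3=0$ it holds because $\alpha_3-\delta_3=0$ and $\alpha_3+1=1$.

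\textbf{Assembling the answer.} Since there is one coprime-to-$p$ part $a=1$ with $\varphi(a)=1$, the blocks need no repetition. Lemma \ref{lem:reordersmith} (with only the prime $p$ contributing) then turns these diagonal entries into the stated cyclic summands of the cokernel. The orders in which the summands are listed do not matter for the isomorphism type.

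The main point requiring care is the bookkeeping of $\alpha_b$ and $\delta_b$ at $b=2$. The first branch of \eqref{eq:closed} gives exponent $\alpha_2-\delta_2=0$ rather than $1$, and this is exactly what produces the clean $(\Z/p^2\Z)^D$ in the first case. The other delicate check is the identification of $\gcd(f,g)$, which uses $u,v\geq1$.
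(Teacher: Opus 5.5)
Your proof is correct and follows the paper's argument essentially step for step: you take $\gcd(f,g)=\Phi_1\Phi_p$ for the free part $\Z^p$, and you note that only the prime $p$ contributes. You then apply Theorem \ref{thm:closed} (the single block $a=1$ of Theorem \ref{thm:global}) with $\alpha_2=\delta_2=1$, $\alpha_3=\delta_3=0$ and the two Euclidean divisions of $D$ and $\max(D-e_2,0)$. Your remarks on the case $v=1$, where $\Phi_p$ drops out of $F$, and on the primes $p'\neq p$ are slightly more careful than the paper's. However, your sentence intersecting $A_{p',a}$ with ``relevant'' indices is garbled and should simply say that $A_{p',a}=\varnothing$ whenever $B_{p',a}\neq\varnothing$.
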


\begin{proof}
In \eqref{eq:equivariant-k}, we have $f(t)=(t-1)^u(t^p-1)^v=\Phi_1(t)^{u+v}\Phi_p(t)^v$ and $g(t)=t^{p^3}-1$. Hence, $\gcd(f,g)=\Phi_1\Phi_p$, which has degree $p$; this yields the free subgroup  $\Z^p$. Dividing $f$ and $g$ by this gcd, we obtain $F(t)=\Phi_1(t)^{u+v-1}\Phi_p(t)^{v-1}$ and $G(t)=\Phi_{p^2}(t)\Phi_{p^3}(t)$. In the notation of Theorem \ref{thm:global}, the only relevant prime is $p$, the set $B_{p,1}=\{2,3\}$, and all cyclotomic factors of $F$ have indices less than $p^2$. Since $\deg F=D$, the left hand side of \eqref{eq:division} equals $D$ for $b=2$ and $\max(D-e_2,0)$ for $b=3$. Moreover, $\alpha_2=\delta_2=1$ and $\alpha_3=\delta_3=0$. If $D<e_2$, then $(q_2,r_2)=(0,D)$ and $(q_3,r_3)=(0,0)$, so \eqref{eq:closed} gives, over $\Z_p$,
\[ F(C_G)\sim I_{e_2-D}\oplus p^2I_D\oplus I_{e_3}.\]
Therefore $\operatorname{coker} F(C_G) \cong (\Z/p^2\Z)^D$. Suppose now that $D\geq e_2$. We have $q_2\geq1$, so for $b=2$ we use the second case of \eqref{eq:closed}, with $\alpha_2=1$. For $b=3$, since $\alpha_3=\delta_3=0$, both cases
of \eqref{eq:closed} give the same expression. Thus, over $\Z_p$,
\[F(C_G)\sim p^{q_3}I_{e_3-r_3}\oplus p^{q_3+1}I_{r_3} \oplus p^{q_2+1}I_{e_2-r_2}\oplus p^{q_2+2}I_{r_2}.\]
 This concludes the proof.
\end{proof}
\begin{example}
Let $p=u=v=2$. Then,
\[K^0_{C_8}(S(\C^4))\cong \Z^2\oplus(\Z/2\Z)^2\oplus(\Z/8\Z)^2.\]

When $(u,v)\in \{(2,1),(1,2),(3,1)\}$ and for an arbitrary prime $p$, the groups in Corollary \ref{cor:illustKtheory} agree with (some special cases of) the $K_0$-groups of quantum weighted lens spaces computed in \cite[Examples 2.4--2.6]{BS18}, on setting $k=p^2$ and $l=p$ in their notation. We note also that the corrigendum given in \cite[Remark 3.5]{GZ24} does not affect the three examples.
\end{example}

\begin{example}\label{ex:lensgroups}
If every $a_j=1$ and $r=d+1$, then the quotient $S(\C^r)/C_N$ is the $(2d+1)$-dimensional standard lens space\footnote{This nomenclature and the notation are adapted from \cite{Yanagida76}, and differ from the ones in \cite{Cattabriga06}; in the notation used here, the $3$-dimensional lens spaces discussed in Section \ref{sec:kls} would be denoted as $L(P;1,Q)$. The more compact $\mathcal{L}(d,N)$ is our own introduction, justified both for brevity and to avoid confusion.}
\[ \mathcal{L}(d,N):=L(N;\underbrace{1,\dots,1}_{d+1 \ \mathrm{times}}).   \]
 The action of $C_N$ on the sphere is free, and \cite[Proposition 2.1]{Segal68} identifies its ordinary $K$-group with the equivariant group in \eqref{eq:equivariant-k}. Moreover, it holds $K^0(\mathcal{L}(d,N)) \cong \Z \oplus \widetilde K^0(\mathcal{L}(d,N))$, where the reduced $K$-group admits the presentation (see also \cite[Lemma 2.7]{Hashimoto83})
\[ \widetilde K^0(\mathcal{L}(d,N)) \cong\Z[t]/\langle 1+t+\cdots+t^{N-1},(t-1)^d\rangle.\]
By \cite[Proposition 3.4.3(1)]{brunerbook}, the latter quotient is in turn isomorphic to the connective $K$-homology group $ku_{2d-1}(BC_N)$, where $BC_N$ denotes the classifying space of $C_N$.

In the world of polynomials and Smith forms, moving from the ordinary to the reduced $K$-group  corresponds to dividing both $f(t)=(t-1)^{d+1}$ and $g(t)=t^N-1$ by their gcd, namely $t-1$.
In this setting and when $N=p^a$, Theorem \ref{thm:global} recovers \cite[Theorem 6.1]{Yanagida76}.
\end{example}

\begin{remark}\label{rem:coker}
    Recall from the Introduction that the matrix $f(C_g)$ represents, in the monomial basis, multiplication by $[f]$ in the quotient ring $Q_g:=\Z[t]/\langle g \rangle$. Thus, we have the $\Z$-module isomorphism
    \begin{equation}\label{eq:isomorphism}
        \operatorname{coker} f(C_g) := Q_g/\langle f \rangle \cong \Z[t]/\langle f,g \rangle.   
    \end{equation} 
   A fortiori, \eqref{eq:isomorphism} can be interpreted as an isomorphism of additive groups. Merging this observation with \eqref{eq:equivariant-k} and Example \ref{ex:lensgroups}, we conclude
   \[ \Z \oplus \widetilde K^0(\mathcal{L}(d,N)) \cong  K^0(\mathcal{L}(d,N)) \cong \operatorname{coker} (C_{t^N-1}-I)^{d+1}  \]
\end{remark}

\subsection{Smith forms of matrix powers}\label{sec:matrixpowers}

Motivated by a question posed by Bruner, in \cite[Theorem 1.1]{Nof26} Noferini  proves that the Smith forms of powers of square integer matrices behave in an eventually periodic manner. To illustrate a connection with the lens space groups discussed in Example \ref{ex:lensgroups}, let $p$ be prime and $a\geq1$, and define $A:=C_{t^{p^a}-1}-I$. By Remark \ref{rem:coker}, the cokernel of the $(d+1)$-th power of $A$ is, up to isomorphism,  the $K$-group of the lens space $\mathcal{L}(d,p^a)$. Moreover, letting $m:=d+1$, \cite[Theorem 6.1]{Yanagida76} yields, for $m\geq p^{a-1}$, that the Smith form of $A^{m+\varphi(p^a)}$ is equal to a diagonal matrix $D$ times the Smith form of $A^m$, where
\[D:=\left(\bigoplus_{j=0}^{a-1}p^{p^j}I_{\varphi(p^{a-j})}\right)\oplus1.\]
(In \cite{Yanagida76}, it is assumed $a>1$, but for $a=1$ the same conclusion follows more directly from \eqref{eq:example1}.) 

Theorem \ref{thm:final} below shows that, under appropriate assumptions, the least eventual period is determined by the eigenvalues. Its statement does not generalize to powers of arbitrary square integer matrices. For example, given a prime $p$, the eigenvalues of $\begin{bmatrix}p&1\\0&p\end{bmatrix}$ have $p$-adic valuation $1 \in \Z$, but the least eventual period is $p \neq 1$ \cite[Section 1]{Nof26}. 

\begin{theorem}\label{thm:final}
Let $f,g\in\Z[t]$ be products of cyclotomic polynomials, with $g$ squarefree. Define $X:=f(C_g)$, $n:=\deg g$, and $r:=\operatorname{rank}X$. For every prime $p$, denote the $p$-adic valuations of the nonzero eigenvalues of $X$, counted with multiplicity, by $\gamma_{p,1}\leq\cdots\leq\gamma_{p,r}$.
Let $T$ be the least common denominator of $ \{\gamma_{p,i}\}$, taken over all $i=1,\dots,r$ and all primes $p$ dividing $\operatorname{Res}(f,\frac{g}{\gcd(f,g)})$. Then, for all sufficiently large values of $m$,
\[\SF(X^{m+T})=D\SF(X^m),\qquad D:=\left(\bigoplus_{i=1}^r\prod_p p^{T\gamma_{p,i}}\right) \oplus I_{n-r}.\]
Moreover, $T$ is the least possible such eventual period.
\end{theorem}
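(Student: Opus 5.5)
The plan is to reduce everything to the explicit local formula of Theorem \ref{thm:global}, applied to $f^m$ and $g$ for every $m\geq1$, and to read off how each local partial multiplicity depends on $m$. Since $g$ is squarefree, $z:=\gcd(f^m,g)=\gcd(f,g)$ does not depend on $m\geq1$. Hence $X^m=f^m(C_g)\sim F_m(C_G)\oplus0_{\ell\times\ell}$, with $G:=g/z$ fixed and $F_m:=f^m/z$, and $r=n-\ell$ is the same for all $m$. Fix a prime $p$ and a coprime-to-$p$ part $a$ with $B_{p,a}\neq\varnothing$. The multiplicities in $F_m$ are $k_h(m)=m\,u(ap^h)-\epsilon_h$, where $\epsilon_h\in\{0,1\}$ records whether $\Phi_{ap^h}\mid z$. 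For each $b\in B_{p,a}$ the left hand side of \eqref{eq:division} is therefore $S_b(m)=m\,s_b-c_b$ for $m$ large, with $s_b:=\sum_{h<b}\varphi(p^h)u(ap^h)$ and $c_b$ a constant. Similarly $\alpha_b(m)=m\,t_b+c'_b$ with $t_b:=\sum_{h>b}u(ap^h)$, while $\delta_b$ is constant for $m$ large.

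The first key step is to match these data with the eigenvalues. The nonzero eigenvalues of $X$ are the values $f(\omega)$ for $\omega$ a root of $G$. Take $\omega$ a root of $\Phi_{ap^b}$ and compute in an unramified extension, as in the proof of Theorem \ref{thm:global}. The factors of $f$ with coprime-to-$p$ part different from $a$ are $p$-adic units by Lemma \ref{lem:bezout}. Together with \eqref{eq:cyclident} and $\Phi_{p^h}(\zeta)=p$ for $h>b$, this gives
\[\nu_p(f(\omega))=\gamma_b:=\frac{s_b}{\varphi(p^b)}+t_b .\]
This value is attained exactly $\varphi(a)\varphi(p^b)$ times, which is the size of the $b$-block in \eqref{eq:globallocal}. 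Thus the multiset $\{\gamma_{p,i}\}$ is the list of the $\gamma_b$, each repeated with the size of its block.

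The second step is to take $m$ large and split into two cases. If $s_b>0$, then $q_b\geq1$ eventually and the block consists of the exponents $\alpha_b+q_b$ and $\alpha_b+q_b+1$, with multiplicities determined by $S_b(m)\bmod\varphi(p^b)$. Replacing $m$ by $m+T$ adds $T s_b$ to $S_b$. When $T\gamma_b\in\Z$, that is $\varphi(p^b)\mid Ts_b$, this leaves $r_b$ unchanged and raises every exponent in the block by exactly $T\gamma_b$. If $s_b=0$, then $q_b=r_b=0$ and the block is $p^{\alpha_b-\delta_b}I$ with $\alpha_b-\delta_b=m\gamma_b+{}$const, which again shifts by $T\gamma_b$. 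So every block entry has the form $m\gamma_b+O(1)$ and shifts by $T\gamma_b$.

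The third step is the sorting, which I expect to be the main obstacle. For $m$ large, entries coming from blocks with different $\gamma_b$ are separated by arbitrarily large gaps, so the increasing rearrangement lists them in order of $\gamma$. Within a group of equal $\gamma$, all entries shift by the same amount. Hence the sorted list satisfies $\eta_{p,i}(m+T)=\eta_{p,i}(m)+T\gamma_{p,i}$. Primes not dividing $\operatorname{Res}(f,G)$ contribute only zeros, so they can be ignored. Lemma \ref{lem:reordersmith} then gives $\SF(X^{m+T})=D\,\SF(X^m)$.

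For minimality, suppose $T'$ is an eventual period with multiplier $D'$. Iterating gives $\eta_{p,i}(m+kT')=\eta_{p,i}(m)+k\,\nu_p(D'_{ii})$. Comparing this with the growth $\eta_{p,i}(m)=m\gamma_{p,i}+O(1)$ and letting $k\to\infty$ forces $\nu_p(D'_{ii})=T'\gamma_{p,i}\in\Z$ for every $i$ and every relevant $p$, so $T\mid T'$. The delicate points are that the $O(1)$ terms are uniformly bounded, which holds because each $r_b<\varphi(p^b)$ and the constants $c_b,c'_b,\delta_b$ are fixed, and that the orderings at different primes are combined position by position, which is exactly what Lemma \ref{lem:reordersmith} allows.
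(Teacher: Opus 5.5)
Your proof is correct, and its skeleton is the same as the paper's. You apply Theorem \ref{thm:global} to $f^m$ and $g$, note that the index sets and $\delta_b$ stabilize while $\alpha_b$ and the left hand side of \eqref{eq:division} are affine in $m$, separate exponents by slope for large $m$, and shift by $T$.

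The genuine difference is in how you identify the slopes with the eigenvalue valuations. The paper never computes $\nu_p(f(\omega))$. It imports from \cite{Nof26} the general fact that, for any integer matrix, $\eta_{p,i}(m)/m\to\gamma_{p,i}$. It then concludes that the $i$-th sorted exponent must have slope $\gamma_{p,i}$, and reuses the same asymptotic fact for minimality.

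You instead compute $\nu_p(f(\omega))=s_b/\varphi(p^b)+t_b$ directly for each root $\omega$ of $\Phi_{ap^b}\mid G$. This uses units from the other coprime-to-$p$ parts, \eqref{eq:cyclident} for $h<b$, and $\Phi_{p^h}(\zeta)=p$ for $h>b$, after splitting $\omega=\xi\zeta$ in an unramified extension as in the proof of Theorem \ref{thm:global}. You then match the count $\varphi(a)\varphi(p^b)$ with the total size of the $b$-blocks.

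Your route buys three things. It makes the argument self-contained given Theorem \ref{thm:global}. It yields $\eta_{p,i}(m)=m\gamma_{p,i}+O(1)$ for the minimality step without outside input. It gives an explicit formula for the slopes and hence for $T$, which explains, for instance, the value of $T$ in the paper's last example. The paper's route is shorter and rests on a result valid for arbitrary integer matrices.

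Your tie handling is also slightly cleaner than the paper's ordering by slope and then by constant term: a uniform shift inside a group of equal slope preserves the order. One small imprecision: when $s_b=0$ the left hand side of \eqref{eq:division} is $0$, not $ms_b-c_b$. You treat that case separately, so nothing breaks.
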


\begin{proof}
Let $z:=\gcd(f,g)$, $G:=g/z$, and $F_m:=f^m/z$. Since $g$ is squarefree, $\gcd(f^m,g)=z$ for every
$m\geq1$. Hence, by \cite[Theorem A]{NoferiniWilliamsBrieskorn}, $X^m\sim F_m(C_G)\oplus0_{(n-r)\times(n-r)}$ and $\deg G=r$. Since $F_m$ divides $f^m$, only the partial multiplicities of the primes dividing $\operatorname{Res}(f,G)$ can be positive.

Fix one such prime $p$, and let $\eta_{p,1}(m)\leq\cdots\leq\eta_{p,r}(m)$ be the finite partial multiplicities of $X^m$ over $\Z_p$. By \cite[Section 2.2 and proof of Theorem 2.5]{Nof26}, $\eta_{p,i}(m)/m\to\gamma_{p,i}$ as $m\to\infty$.

For every cyclotomic factor $\Phi_d$ of $f$, denote its multiplicity in $f$ by $u(d)$. For $m\geq2$, its multiplicity in $F_m$ is thus $mu(d)-1$ if $\Phi_d\mid g$, or $mu(d)$ otherwise. Hence, when applying Theorem \ref{thm:global} to $f^m$ and $g$, the index sets and $\delta_b$ do not depend on $m$, while $\alpha_b(m) \in \Z[m]$ is an affine function of $m$.  Denote now $e_b:=\varphi(p^b)$. For each pair $(a,b)$, the left hand side of \eqref{eq:division} has the form $N_b(m):=\max(ms-c,0)$ for constant nonnegative integers $s,c$. There are two cases: (i) If $s=0$, the only exponent in \eqref{eq:closed} is $\alpha_b(m)-\delta_b$; (ii) If $s>0$,  then $q_b(m)\geq1$ for large enough $m$ and the exponents in \eqref{eq:closed} are
\[ \alpha_b(m)+ \left\lfloor\frac{N_b(m)+j}{e_b}\right\rfloor, \qquad j=0,\ldots,e_b-1.\]
In both cases, these are expressions of the form $\lfloor m\lambda+\beta\rfloor$ where $\lambda,\beta \in \QQ$.

By \eqref{eq:globallocal}, the finite partial multiplicities of $X^m$ over $\Z_p$ are obtained by collecting these exponents for all pairs $(a,b)$, each repeated $\varphi(a)$ times, and reordering the resulting list. For large enough $m$, this ordering is determined by the values of the slopes $\lambda$, and in case of tie by the value of the constants $\beta$. Since $\lfloor m\lambda+\beta\rfloor/m\to\lambda$ and $\eta_{p,i}(m)/m\to\gamma_{p,i}$ \cite{Nof26}, the expression for the $i$-th exponent must satisfy $\lambda=\gamma_{p,i}$. Thus, there exist constants $\beta_{p,i}$ such that
$\eta_{p,i}(m)=\lfloor m\gamma_{p,i}+\beta_{p,i}\rfloor$ for all large enough $m$. Since $T\gamma_{p,i}\in\Z$, it follows that $\eta_{p,i}(m+T)=\eta_{p,i}(m)+T\gamma_{p,i}$. After recovering the last $n-r$ zero diagonal entries, Lemma \ref{lem:reordersmith} yields the sought relation between $\SF(X^{m+T})$ and $\SF(X^m)$.

Finally, if $L$ is any eventual period with corresponding diagonal matrix $E$, the same argument on asymptotic growth gives $\nu_p(E_{ii})=L\gamma_{p,i}\in\Z$, for all $p$. Hence $T\mid L$.
\end{proof}

\begin{example}
This example is due to Bruner \cite{email}. Let $g=\Phi_1\Phi_4$ and let $A$ be the companion matrix of $g(t-1)=t^3-4 t^2+6t -4$. By Corollary \ref{cor:composition}, $A^m$ is unimodularly equivalent to $f(C_g)$ with $f=\Phi_2^m$, for all $m$. Note that $A$ has eigenvalues $2$ and $1 \pm i$, with $\nu_2(2)=1$ and, observing $(1\pm i)^2 = \pm 2i$, $\nu_2(1 \pm i)=\frac12$. Thus, by Theorem \ref{thm:final} the least eventual period is $2$; indeed, one can check via Theorem \ref{thm:closed} that $\SF(A^{m+2})=(2I_2\oplus4)\SF(A^m)$ for all $m\geq1$. 
\end{example}

\begin{example}
Let $c\geq4$, $u,v\geq1$, and define
\[ G_c(t):=\Phi_1(t)\Phi_4(t)\Phi_{2^c}(t),\qquad X:=\Phi_2(C_{G_c})^u\Phi_8(C_{G_c})^v
 \in\Z^{(2^{c-1}+3)\times(2^{c-1}+3)}.\]
For every $m\geq1$, write $m(u+4v)-3=:2^{c-1}q_m+r_m$, with $0\leq r_m<2^{c-1}$. Since $|\det X|$ is a power of $2$ \cite[Theorem 7.4]{NoferiniWilliamsBrieskorn}, applying Theorem \ref{thm:closed} with $p=2$,
\[ \SF(X^m) = 2^{q_m}I_{2^{c-1}-r_m} \oplus2^{q_m+1}I_{r_m} \oplus2^{mv+1+\lfloor(mu-1)/2\rfloor} \oplus 2^{mv+1+\lfloor mu/2\rfloor} \oplus2^{m(u+v)+2}.\]
Moreover, by Theorem \ref{thm:final}, the least eventual period and the corresponding diagonal matrix are
\[ T:=\frac{2^{c-1}}{\gcd(2^{c-1},u+4v)},\qquad D:=2^{T(u+4v)/2^{c-1}}I_{2^{c-1}}\oplus2^{T(u+2v)/2}I_2\oplus2^{T(u+v)}.\]
Hence, observing that $Tu$ is even, $\SF(X^{m+T})=D\SF(X^m)$ for all $m\geq1$.
\end{example}

\section*{AI statement}

The ideas and proofs in this paper are human. We acknowledge use of ChatGPT 6.0 for literature review and auditing of the proofs and the exposition; some (minor) gaps in the original proofs of Lemma \ref{lem:compablocchi} and Lemma \ref{lem:Pm2} were found and fixed after this audit.

\section*{Acknowledgements}

We thank Robert Bruner who shared with us his observations about the link between K-theory, cyclotomic fields, and Smith forms of powers of companion matrices \cite{email}. In later correspondence about \cite{Nof26}, Bruner also pointed Noferini to \cite{brunerbook}. Williams thanks the Department of Mathematics and Systems Analysis at Aalto University for its hospitality during a research visit in Summer 2026, when part of this work took place.

\end{document}